\documentclass[12pt]{article}

\usepackage{indentfirst}
\usepackage{amsfonts}
\usepackage{amsmath}
\usepackage{amssymb}
\usepackage{amsbsy, amsthm}
\usepackage[margin=1in]{geometry}
\usepackage{imakeidx,xcolor}
\usepackage[noabbrev,capitalise]{cleveref}
\usepackage{thmtools}
\usepackage{thm-restate}

\newtheorem{dfn}{Definition}[section]
\newtheorem{theorem}[dfn]{Theorem}
\newtheorem{lemma}[dfn]{Lemma}

\newtheorem{corollary}[dfn]{Corollary}
\newtheorem{question}[dfn]{Question}

\newtheorem{obs}[dfn]{Observation}

\newcommand{\mc}[1]{\mathcal{#1}} 
\newcommand{\bb}[1]{\mathbb{#1}}
\newcommand{\brm}[1]{\operatorname{#1}}
\newcommand{\bd}[1]{\partial #1}

\newcommand{\fs}[2]{\left(\frac{#1}{#2}\right)}
\newcommand{\s}[1]{\left(#1\right)}

\newcommand{\diam}{\operatorname{diam}}
\newcommand{\tw}{\operatorname{tw}}

\newcommand{\asdim}{\operatorname{asdim}}
\newcommand{\iasdim}{\operatorname{int-asdim}}

\def\F {{\mathcal F}}
\def\Se {{\mathcal S}}
\def\A {{\mathcal A}}
\def\B {{\mathcal B}}

\def\U {{\mathcal U}}

\def\M {{\mathcal M}}

\def\dist {{\rm dist}}

\begin{document}

\title{Nerve-type and invariance theorems for asymptotic dimension}
\author{Chun-Hung Liu\thanks{chliu@tamu.edu. Department of Mathematics, Texas A\&M University, USA. Partially supported by NSF under CAREER award DMS-2144042.}
\and 
Sergey Norin\thanks{sergey.norin@mcgill.ca. Department of Mathematics and Statistics, McGill University, Canada. Supported by an NSERC Discovery grant.}}
\date{}
	\maketitle

\begin{abstract}
Asymptotic dimension of metric spaces is a large-scale analog of covering dimension of topological spaces.
An intersection graph of a family of sets is the graph whose vertices are the members of the family and whose edges correspond to pairs of members with non-empty intersection.

Our first main result connects the asymptotic dimension of the intersection graph of a family ${\mathcal F}$ and the Assouad-Nagata dimension of the ambient metric space containing members of ${\mathcal F}$ under some mild and necessary assumptions.
We prove that if ${\mathcal F}$ is a family of subsets of a metric space of Assouad-Nagata dimension $n$ such that every ball of radius $r$ intersects at most $f(r/s)$ pairwise disjoint members of ${\mathcal F}$ of diameter at least $s$ for some function $f$, then the asymptotic dimension of the intersection graph of ${\mathcal F}$ is at most $n+1$.
This result is optimal both quantitatively and qualitatively in several senses. 
As a corollary of this result, the asymptotic dimension of the intersection graph of any family of compact convex sets of bounded aspect ratio in ${\mathbb R}^n$, such as a family of balls in ${\mathbb R}^n$, is at most $n+1$. 

Our second main result states that the asymptotic dimension of the intersection graph of a family ${\mathcal F}$ of connected closed sets of a connected topological space with connected boundary equals the asymptotic dimension of the intersection graph of the family of the boundary of the sets in ${\mathcal F}$, under a mild condition.
In particular, the asymptotic dimension of the intersection graphs of families of spheres in ${\mathbb R}^n$ equals $n$ or $n+1$ when $n \geq 2$.
\end{abstract}

\section{Introduction}	

One way to measure the complexity of a graph is to study the complexity of objects that can represent the graph in terms of their pairwise intersection. 
Formally, for a family\footnote{Every family is a multiset in this paper.} $\F$ of sets, the \emph{intersection graph of $\F$} is the (finite or infinite) graph\footnote{All graphs are finite and simple in this paper unless otherwise specified. 
}, denoted by $I(\F)$, with vertex set $\F$ such that any two distinct vertices $S_1,S_2 \in \F$ are adjacent in $I(\F)$ if and only if $S_1 \cap S_2 \neq \emptyset$. 
For example, the \emph{boxicity} of a graph $G$ is the minimum $n$ such that $G$ is an intersection graph of a family of closed axis-parallel boxes (i.e. Cartesian products of closed intervals in $\mathbb{R}$) in ${\mathbb R}^n$. 
Boxicity has a tight connection to dimension of posets \cite{ABC11}, and the graphs of boxicity one are exactly the interval graphs.
Another example is Koebe's theorem, stating that the planar graphs are precisely the intersection graphs of the closed disks in ${\mathbb R}^2$ with pairwise disjoint interior.
More generally, intersection graphs of common geometric objects, such as balls or cubes in Euclidean spaces, have been extensively studied.
On the other hand, every graph is an intersection graph of  some family of sets, so we have to impose conditions on those sets in order to prove interesting results.

In this paper, we study asymptotic dimension of intersection graphs by proving two main theorems. 
The first can be seen as an analogue of nerve theorems for asymptotic dimension.  It relates the Assouad-Nagata dimension of the ambient metric space to the asymptotic dimension of the intersection graph of ``sufficiently nice'' families of  subsets of this space.
The second one relates the asymptotic dimension of an intersection graph of a family $\F$ of subsets of a topological space and the asymptotic dimension of the intersection graph of the families of the boundary of the members of $\F$.

Asymptotic dimension, introduced by Gromov \cite{G93}, addresses large-scale behaviors of a metric space and is an analog of covering dimension of topological spaces that addresses small-scale behaviors.
Assouad-Nagata dimension is a refinement of asymptotic dimension and addresses both large- and small-scale behaviors.
Note that the condition that we impose on the ambient metric space should address behaviors in every scale since the intersection graph does not change when we scale the ambient space by multiplying an arbitrary nonzero constant.

We first introduce notions in order to define asymptotic dimension.
Let $(X,d)$ be an (extended) metric space\footnote{All metric spaces in this paper are extended metric spaces. That is, $d(x,y)$ is allowed to be infinite for points $x,y$ in $X$.}. 
The \emph{diameter} of a subset $U$ of $X$, denoted by $\diam(U)$, is $\sup_{x,y \in U}d(x,y)$.
For $r>0$, we say that a collection $\mc{U}$ of subsets of $X$ is \emph{$r$-disjoint} if $\inf_{u \in U, u' \in U'}d(u,u')>r$ for any distinct $U,U' \in \U$.
For a nonnegative integer $n$, a function $f:\mathbb{R}_+ \to \mathbb{R}_+$ is an \emph{$n$-dimensional control function} for $(X,d)$ if for all $r \in \mathbb{Z}_+$, there is a cover $\mc{U} = \bigcup_{i=1}^{n+1}\mc{U}_i$ of $X$ such that $\sup_{U \in \U}\diam(U) \leq f(r)$ and $\U_i$ is $r$-disjoint for each $i \in [n+1]$.
A \emph{dilation} is a function $f:\mathbb{R}_+ \to \mathbb{R}_+$ such that there exists $c>0$ with $f(x)=cx$ for every $x \in \mathbb{R}_+$.

The \emph{asymptotic dimension} of a metric space $(X,d)$ is the infimum of nonnegative integers $n$ such that there exists an $n$-dimensional control function for $(X,d)$.
The \emph{Assouad-Nagata dimension} of a metric space $(X,d)$ is the infimum of nonnegative integers $n$ such that some dilation is an $n$-dimensional control function for $(X,d)$.
Note that Assouad-Nagata dimension addresses both large-scale and small-scale behaviors.
We refer readers to \cite{BD08} for a survey about asymptotic dimension.

We study the asymptotic dimension of graph metrics.
We view a (finite or infinite) graph $G$ as a metric space $(V(G),\dist_G)$, where $\dist_G(u,v)$ is the infimum of the length of paths between $u$ and $v$ in~$G$ for any $u,v \in V(G)$, and the length of a path is the number of its edges.
The \emph{asymptotic dimension} of a graph class $\mc{G}$, denoted by $\asdim(\mc{G})$, is defined as the asymptotic dimension of the (possibly infinite) graph that is the disjoint union of all graphs in $\mc{G}$; equivalently, $\mc{G}$ has asymptotic dimension at most $n \in \mathbb{Z}_+$ if some function $f:\mathbb{R}_+\to\mathbb{R}_+$ is an $n$-dimensional control function for all graphs in $\mc{G}$.

\subsection{Space-filling families}

Our first main result addresses the question that given a positive integer $n$ and a kind of shapes in ${\mathbb R}^n$, what is the asymptotic dimension of the class of all intersection graphs of families of this kind of shapes?
A concrete example is the following:

\begin{question} \label{question:asdim_ball}
Let $n$ be a positive integer, and let $\F$ be the class of intersection graphs of families of closed balls in ${\mathbb R}^n$.
What is the asymptotic dimension of $\F$?
\end{question}

It is not hard to show that every $n$-dimensional grid is an intersection graph of a family of unit balls in ${\mathbb R}^n$.
The $n$-dimensional grids are quasi-isometric to the $n$-dimensional Euclidean space, which has asymptotic dimension $n$ \cite{G93}.
So $n$ is a lower bound for \cref{question:asdim_ball}.

We provide the following almost matching upper bound for \cref{question:asdim_ball} in this paper.

\begin{theorem} \label{ball_intro}
Let $n$ be a positive integer, and let $\F$ be the class of intersection graphs of families of closed balls in ${\mathbb R}^n$.
Then $n \leq \asdim(\F) \leq n+1$.
\end{theorem}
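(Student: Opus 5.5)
The lower bound is already explained in the text. We will show that for every $n$-dimensional grid $G_n$ (the graph on $\mathbb{Z}^n$ with vertices adjacent when they are at Euclidean distance $1$), the family of closed balls of radius $1/2$ centred at the points of $\mathbb{Z}^n$ has intersection graph equal to $G_n$. The finite subgraphs $[k]^n$ embed into the intersection graph class isometrically, with their graph metric, and these finite grids have asymptotic dimension $n$ as a class; so $\asdim(\F)\ge n$. To justify the last fact we use that the union of the grids $[k]^n$ is quasi-isometric to $\mathbb{R}^n$ in a uniform way, together with Gromov's theorem that $\asdim \mathbb{R}^n = n$. A control function working for all the $[k]^n$ would give one for $\mathbb{Z}^n$, since $\mathbb{Z}^n$ is the increasing union of translates of the boxes $[k]^n$ and a bounded-diameter covering argument passes to the limit. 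Alternatively, we can quote the standard lower bound for cubes directly.

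For the upper bound, the plan is a Lebesgue/nerve-type argument. Take a family $\mathcal{B}$ of closed balls in $\mathbb{R}^n$ and a scale $r$. We want a cover of $I(\mathcal{B})$ by $n+2$ families of $r$-disjoint sets of bounded diameter, with the bound depending only on $n$ and $r$. The key difficulty is that the balls have arbitrary radii, so no single Euclidean scale works. We therefore stratify the balls by radius into dyadic classes $\mathcal{B}_k=\{B: 2^k\le \mathrm{rad}(B)<2^{k+1}\}$ and use the Assouad–Nagata property of $\mathbb{R}^n$: at every scale $s$ there is a cover by $n+1$ colour classes of $cs$-disjoint sets of diameter at most $Cs$, with $c,C$ independent of $s$. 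A packing argument gives the needed bounded-interaction property. A ball of radius $R$ meets at most $f(R/s)$ pairwise disjoint balls of radius at least $s$, by volume comparison. Hence paths in $I(\mathcal{B})$ of bounded length through balls of radius $\ge 2^k$ stay within Euclidean distance $O(r2^{k'})$ of their start, where $2^{k'}$ is the largest radius met.

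The construction then proceeds as follows. Group the scales into blocks of length $L=L(n,r)$ and treat consecutive blocks alternately, so that in graph distance a path jumping between well-separated scale blocks must be long. This uses that two balls whose radii differ by a huge factor are linked only via chains, and each step can change the relevant scale. Within a block of scales around $2^k$, colour each ball by the Nagata colour of the piece containing its centre, using the cover of $\mathbb{R}^n$ at scale $\sim r\,2^{k+L}$. This gives $n+1$ colours. The extra $(n+2)$-th colour absorbs the balls near the interfaces between blocks. This is the usual trick where the $+1$ comes from combining a dimension-$n$ cover in space with a dimension-$0$ decomposition in the scale direction, since the total dimension is at most $n+0+1$. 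The bounded diameter in $I(\mathcal{B})$ of each piece follows from the packing bound: a Euclidean region of diameter $O(r2^{k+L})$ meets boundedly many disjoint balls of radius $\ge 2^k$, so connected components of same-colour balls there have bounded graph diameter. The $r$-disjointness in the graph follows because a short path between two same-coloured pieces stays in a bounded scale window and Euclidean neighbourhood, which contradicts the Euclidean $cs$-disjointness.

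The main obstacle is this last step. Small balls can link large ones, so a short graph path may pass through balls of much smaller radius. Such a path travels Euclidean distance at most comparable to the sizes involved, but it may leave the current scale block. I would handle this by measuring each path's displacement using its largest ball, and by arguing that the interface colour separates scale blocks. Within the exact paper this is presumably packaged as the general theorem: Assouad–Nagata dimension $n$ together with the $f(r/s)$ packing condition gives $\asdim\le n+1$. Balls satisfy that condition by volume comparison.
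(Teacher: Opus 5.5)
Your lower bound is the paper's argument. Your closing sentence also names the paper's actual proof of the upper bound: balls are $f$-space-filling by volume comparison, $\mathbb{R}^n$ has Assouad--Nagata dimension $n$, and \cref{asdim_space_filling_intro} applies. If you simply cite that theorem, you are done. However, the direct construction you propose in its place has a genuine gap, exactly at the step you flag. The assertion that a short path between same-coloured pieces ``stays in a bounded scale window'' is false. A single edge of $I(\mathcal{B})$ can join a ball of radius $2^k$ to one of radius $2^{k+100L}$, and no interface ball lies on such an edge, so an interface colour cannot separate scale blocks. Concretely, place balls $H_1,\dots,H_m$ of radius $R$ in a row with tiny gaps, and in the $j$-th gap a ball $A_j$ of radius $\rho\ll R$ meeting $H_j$ and $H_{j+1}$, with both radii in the interiors of scale blocks. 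Then $I$ is the path $H_1A_1H_2A_2\cdots$, so $\dist(A_j,A_{j+1})=2$. Yet the centres of the $A_j$ are about $2R$ apart, far beyond the scale $r\rho 2^L$ of the Nagata cover used to colour them. Each $A_j$ therefore lies in its own piece, and nothing in your rule stops these pieces from sharing a colour. In that case, for $r\ge 2$, the $A_j$ form one monochromatic component of $G^r$ of weak diameter about $2m$. Measuring displacement by the largest ball on the path does not help, because that scale is unbounded relative to the one at which you coloured.

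The underlying issue is that the scale direction is not a ray you can cut into blocks, so the ``$n+0+1$'' count has nothing behind it. A large ball acts as a hub for small balls anywhere along its boundary, and the scales are organised like a tree. The paper handles this in the proof of \cref{asdim_space_filling_intro} in two steps.
\begin{enumerate}
\item It replaces each set by a $b$-shallow union, so that short graph distances become intersections (\cref{lem-cons}). It then uses the Dvo\v{r}\'{a}k--Norin laminar web to split the family into $n+1$ classes whose intersection graphs have $(k,2)$-centred tree-decompositions following the nested laminar structure (\cref{l:dn1}). This is what absorbs the cross-scale edges.
\item Each class then has a bounded-weak-diameter $2$-colouring, which naively yields $2n+2$ colours (the earlier $2n+1$ bound). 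Getting down to $n+2$ colours is \cref{colorSpace}. It colours the first $s-1$ classes at scale $3r$ with $s$ colours, then recolours the $s$-th class together with one of those colour classes, using a precolouring-extension result for bounded tree-width graphs (\cref{precolorSpace,tw_basic}).
\end{enumerate}
Your sketch has no substitute for either step.
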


\cref{ball_intro} is an immediate corollary of our more general result about space-filling families.
For a function $f:\mathbb{R}_+\to\mathbb{R}_+$, a family $\mc{S}$ of subsets of a metric space $(X,d)$ is \emph{$f$-space-filling} if every member of $\Se$ has finite diameter, and for any $r,s>0$ and $x\in X$, the ball $B(x,r)$ of radius $r$ around $x$ is intersected by at most $f(r/s)$ pairwise disjoint members of $\mc{S}$ of diameter at least $s$.

It is easy to see that for every positive integer $n$, there exists a function $f$ such that every family of balls in ${\mathbb R}^n$ is $f$-space-filling by considering the volume of the balls.
So the following theorem, which is our first main result, implies \cref{ball_intro}.

\begin{theorem} \label{asdim_space_filling_intro}
For any positive integer $n$, function $f: {\mathbb R}_+ \rightarrow {\mathbb R}_+$ and metric space $(X,d)$ of Assouad-Nagata dimension at most $n$, if $\F$ is the class of the intersection graphs of $f$-space-filling families of subsets of $X$, then $\asdim(\F) \leq n+1$. 
\end{theorem}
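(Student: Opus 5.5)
The plan is to reduce to a uniform statement about the graph $G=I(\F)$ for a single $f$-space-filling family $\F$ of subsets of $X$. Fix a dilation $cr$ witnessing that $(X,d)$ has Assouad-Nagata dimension at most $n$. Given $r$, we want a cover of $V(G)$ by $n+2$ families that are $r$-disjoint in $\dist_G$ and have $G$-diameter bounded by some $g(r)$, where $g$ depends only on $n,c,f$.

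The key idea is to split by scale. The extra dimension (from $n$ to $n+1$) will pay for the interaction between the scale index and the ambient cover. Choose a large ratio $\lambda=\lambda(r)$ and partition $\F$ into layers $\F_k=\{S\in\F:\diam(S)\in[\lambda^{k},\lambda^{k+1})\}$ for $k\in\mathbb{Z}$; points and sets of diameter $0$ are handled as the lowest layer, or by an approximation argument. The layers are then treated as a one-dimensional direction.

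Within one layer $\F_k$, take a Nagata cover of $X$ at scale $R_k\approx C r\lambda^{k+1}$. This gives $n+1$ families that are $R_k$-disjoint, each member having diameter at most $cR_k$. Assign each $S\in\F_k$ to a cover member meeting it. A $G$-path of length at most $r$ through sets of diameter at most $\lambda^{k+1}$ moves at most about $r\lambda^{k+1}$ in $X$. Hence sets of $\F_k$ assigned to distinct members of the same color class cannot be connected by a short path that stays within layers $\le k$. A path through larger sets, however, can jump far, and this is where the difficulty lies.

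To control it, interleave the scales by grouping consecutive layers into blocks. The standard "colour the levels with two alternating patterns" trick, as in Hurewicz-type or union theorems, uses the extra $+1$ colour.

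To bound the diameter of a piece, we need two facts. First, the members of $\F_k$ assigned to one cover member $U$ (of diameter at most $cR_k$) are at most $f(O(cr\lambda))$ pairwise disjoint sets, by $f$-space-filling applied with $s=\lambda^k$. Second, any family of sets of diameter at least $s$ in a ball, with few pairwise disjoint members, has intersection graph with few components of bounded diameter. This holds because a maximal disjoint subfamily dominates, so each piece has bounded $G$-diameter after we also connect the piece through the dominating sets. The hard part is a path that exits a piece by climbing to a much larger set $T\in\F_{k'}$ with $k'\gg k$ and then descending elsewhere. We will handle this by attaching each small set to the piece of the largest set within $G$-distance $r$ of it, in a top-down recursion over layers. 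Since consecutive layers differ in scale by a factor of $\lambda\gg r$, the attached small sets stay within distance $O(r\lambda^{k'+1})$ in $X$ of $T$'s piece.

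The main obstacle is making this top-down attachment compatible with $r$-disjointness across all layers at once while keeping only $n+2$ colours. The first approach I would try is induction on scale with a "buffer" layer: blocks of two consecutive layers get colours from $\{1,\dots,n+1\}$ shifted by a parity, and sets near block boundaries get colour $n+2$. I expect the bookkeeping that keeps the $(n+2)$-nd colour class $r$-disjoint to be the delicate step.
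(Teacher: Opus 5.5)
Your within-layer analysis is sound: a Nagata cover at scale about $r\lambda^{k+1}$, boundedly many disjoint members per cover element by space-filling, and domination by a maximal disjoint subfamily. But the proposal stops exactly at the step that carries the content of the theorem, namely gluing all scales together with a single extra colour, and the mechanism you sketch for it fails.

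If ``near a block boundary'' refers to the layer index, then colour $n+2$ goes to entire layers. One layer can contain, say, the closed balls of diameter $\lambda^k$ centred at the points of $\lambda^k\mathbb{Z}^n$ in a huge cube of $\mathbb{R}^n$. This family is space-filling and its intersection graph is an $n$-dimensional grid, so colour $n+2$ gets monochromatic components of unbounded diameter.

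In a genuine product $X\times\mathbb{R}$ one repairs this by staggering the buffers separately for each of the $n+1$ ambient colour classes. Here even that does not work, because the layer index is not coarsely Lipschitz on $G$. A set in layer $k$ can meet a set in layer $k+50$, so a few buffer layers do not $r$-separate blocks in $\dist_G$. Likewise, two same-coloured sets of layer $k$ assigned to far-apart cover members are at $G$-distance $2$ as soon as both meet one huge set.

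Controlling these jumps is precisely your ``top-down attachment'', which you leave unspecified. As a recursion over unboundedly many layers, you give no reason why the separation radius needed at each level does not compound. Also, attaching each set to ``the largest set within distance $r$'' is not a transitive or consistent rule.

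Here are the ideas you are missing, in the form the paper uses them.
\begin{enumerate}
\item Scale is organised as a tree rather than a line. By the laminar-web lemma of Dvo\v{r}\'{a}k and Norin (\cref{l:dn1}), a space-filling family splits into $n+1$ classes. Each class has an intersection graph with a tree-decomposition whose bags are covered by boundedly many sets of bounded diameter.
\item To make this compatible with distance $r$ in $I(\Se)$, the paper applies the lemma not to $\Se$ but to the $b$-shallow unions $U(S)$ with $b=3^n r$, which remain space-filling (\cref{lem-cons}). This gives each class a $b$-compliant $(k,4b+2)$-centred tree-decomposition in the metric $\dist_{I(\Se)}$.
\item The $n+1$ classes are merged with only one new colour per class, by precolouring extension in graphs of bounded tree-width (\cref{precolorSpace,colorSpace}). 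The colour-$1$ class built so far is $(3r,D_0)$-controlled and is kept as a precoloured set. The new class is then coloured from $\{1,s+1\}$.
\end{enumerate}
That precolouring-extension step is what replaces your buffer colour. Since this recursion has only $n+1$ steps, the factor-$3$ loss in the compliance radius per step is affordable.
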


\cref{asdim_space_filling_intro} improves the upper bound $2n+1$ proved by Dvo\v{r}\'{a}k and Norin \cite{DvoNor25} and is optimal both quantitatively and qualitatively.
Quantitatively, we prove the following lower bound matching the upper bound in \cref{asdim_space_filling_intro}.

\begin{theorem} \label{asdim_space_filling_lower_intro}
For every positive integer $n$, there exists a function $f: {\mathbb R}_+ \rightarrow {\mathbb R}_+$  such that if $\F$ is the class of the intersection graphs of finite $f$-space filling families of subsets of $\bb{R}^n$, then $\asdim(\F) \geq n+1$.
\end{theorem}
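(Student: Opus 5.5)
The plan is to realize, as intersection graphs of $f$-space-filling families in $\bb{R}^n$, finite graphs that approximate the $(n+1)$-dimensional hyperbolic space $\mathbb{H}^{n+1}$. The standard model is the combinatorial horoball over the grid $\mathbb{Z}^n$, equivalently the dyadic cube graph of $[0,1]^n$. Its vertices are the dyadic cubes $Q$ of side $2^{-k}$ for $0\le k\le m$. A cube is adjacent to the cubes of the same level that touch it, and to its parent. This graph $H_m$ is uniformly quasi-isometric (independently of $m$) to a region of $\mathbb{H}^{n+1}$ that contains balls of radius tending to infinity as $m\to\infty$. Since $\asdim \mathbb{H}^{n+1}=n+1$, and since any $n$-dimensional control function for the class $\{H_m\}$ would, by a compactness/limit argument on larger and larger balls, give one for $\mathbb{H}^{n+1}$, the class $\{H_m\}$ has asymptotic dimension at least $n+1$. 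If a direct route is preferable, I would instead cite the known fact that the hyperbolic cone over $\bb{R}^n$ (or the dyadic horoball) has asymptotic dimension $n+1$.

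It therefore suffices to find one function $f$ such that every $H_m$ is (quasi-isometric, uniformly, to) the intersection graph of a finite $f$-space-filling family in $\bb{R}^n$. The naive choice is to take each $S_Q$ to be a slight enlargement of $Q$. This fails, because nested cubes intersect, so every cube would be adjacent to all of its ancestors and the graph would have bounded diameter.

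The construction I would try instead represents each vertex $Q$ of level $k$ by a small ball $b_Q$ of radius $\epsilon 2^{-k}$ near the center of $Q$. Each edge of $H_m$ is realized by a thin tube of width $\epsilon 2^{-k'}$, where $k'$ is the deeper of its two levels. The tube joins $b_Q$ to the neighbouring ball, and its length is $O(2^{-k})$. The tube is attached to one of its endpoint sets; to avoid that, subdivide each edge into a bounded number of tube pieces, which changes distances only by a constant factor. The sets at level $k$ are then chosen so that they avoid all sets at levels $\ge k+2$. This is done by placing the level-$(k+1)$ balls and tubes, and everything deeper, inside a "private" region of each child cube (for example a shrunken copy), and routing the level-$k$ tubes through the complement of these regions. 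This is possible for $n\ge2$ by a general-position argument. For $n=1$ the tubes cannot avoid each other on a line, so I would handle that case separately, using that two consecutive-level interval families already form a tree-like pattern. Even with this trick the $n=1$ case is where I am least sure of the details.

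With this layout, $I(\F)$ is $H_m$ with edges subdivided boundedly. Space-filling is checked as follows. A set of diameter at least $s$ has level $k$ with $2^{-k}\gtrsim s$. Pairwise disjoint members of level $k$ meeting $B(x,r)$ number at most $C(r2^{k}+1)^n$ by a volume/packing bound, since members of a fixed level have bounded overlap.

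The main obstacle is bounding the contribution of coarse levels $2^{-k}\gg r$. Summing a $+1$ over all such levels would give a bound of order $\log(1/s)$, which is not a function of $r/s$. I would fix this using the private-region nesting. A ball of radius $r\ll 2^{-k}$ lies essentially within one child region at each coarse scale, so it can meet sets of only $O(1)$ consecutive coarse levels. Consequently the total count is at most $C\,(r/s+1)^n+O(1)=:f(r/s)$. Ensuring this "locality across scales" while keeping the tubes connected is the delicate step, and it is where I would spend most effort, possibly by shrinking $\epsilon$ geometrically with depth inside each private region.
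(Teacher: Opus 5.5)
The gap is in the realization step, which is the real content of the theorem. Your members (balls plus tubes) are connected, and connectivity is exactly what the construction cannot afford.

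For $n=1$, connected sets are intervals. Interval graphs have tree-decompositions whose bags are cliques, so they have asymptotic dimension at most $1$. Hence no $n=1$ argument of the kind you sketch can exist.

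For $n=2$, curves in the plane cannot be moved into general position to become disjoint, and this is not a technicality. Suppose the graph obtained from $H_m$ by subdividing every edge is an intersection graph of path-connected planar sets. Then $H_m$ is planar: shrink each vertex set to a point and route each edge through its subdivision sets. Planar graphs have asymptotic dimension at most $2$, which contradicts your own (correct) claim that $\asdim\{H_m\}\geq 3$.

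For $n\geq 3$, the private-region mechanism as described clashes with the horizontal edges of $H_m$. Take two adjacent level-$k'$ cubes $C_1,C_2$. Their private regions are at some distance $g>0$ that does not depend on the depth. Yet about $2^{(K-k')(n-1)}$ edges between adjacent level-$K$ cubes straddling $C_1\cap C_2$ must cross this gap.
\begin{itemize}
\item If each edge uses boundedly many pieces, this yields that many pairwise disjoint members of diameter $\Omega(g)$ meeting one ball of radius $O(2^{-k'})$. That violates space-filling as $K\to\infty$.
\item If each edge uses unboundedly many pieces, the quasi-isometry with $H_m$ is lost.
\end{itemize}
So the ``locality across scales'' you single out as delicate is not delivered by the proposal.

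The missing idea is to give up connectivity. The paper's members are finite sets of dyadic points. Let $Q_i=\{x\in\Delta: 2^ix\in\mathbb{Z}_+^n\}$ and $P_i=Q_i-Q_{i-1}$. For $p\in P_i$, the member $S(p)$ consists of the points of $P_i\cup P_{i-1}$ within distance $\sqrt{n}\,2^{1-i}$ of $p$.

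Members whose levels differ by at least $2$ are automatically disjoint, because they live in disjoint layers $P_j$. So no routing is needed in any dimension, including $n=1,2$.

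Your coarse-level problem also disappears. A member of diameter at least $s$ has level at most $i_0$, where $2^{-i_0}$ is comparable to $s/\sqrt{n}$, so it lies in the single grid $Q_{i_0}$. Pairwise disjoint such members meeting $B(x,r)$ therefore contain distinct points of $Q_{i_0}\cap B(x,r)$. This gives at most $(8\sqrt{n}\,r/s+1)^n$ of them, regardless of how many levels occur.

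The resulting intersection graphs are essentially your dyadic horoballs. The paper obtains the lower bound directly from the KKM theorem on the cone over $\Delta$. Your route via $\asdim\mathbb{H}^{n+1}=n+1$ plus a compactness argument would be an acceptable substitute for that half.
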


Qualitatively, the conditions of \cref{asdim_space_filling_intro} cannot be relaxed and the conclusion cannot be strengthened in the following sense:
First, as observed by Dvo\v{r}\'{a}k and Norin \cite{DvoNor25}, a certain set of expanders of infinite asymptotic dimension is the subclass of the class of intersection graphs of families of closed axis-parallel boxes in ${\mathbb R}^3$ if we allow those boxes to be very thin and long (i.e.\ each box is the Cartesian product of a small square and a long interval).
Hence the condition for being space-filling cannot be dropped.
Second, the condition that $(X,d)$ has bounded Assouad-Nagata dimension cannot be weakened to be having bounded asymptotic dimension because asymptotic dimension does not address small-scale behaviors.
In particular, our proof of \cref{asdim_space_filling_lower_intro} shows that the members of $\F$ are contained in a subspace of $\mathbb{R}^n$ of bounded diameter and hence the ambient space can be chosen to have asymptotic dimension $0$.
Third, the conclusion that $\F$ has asymptotic dimension at most $n+1$ cannot be strengthened to having bounded Assouad-Nagata dimension: Dvo\v{r}\'{a}k and Norin \cite{DvoNor25} observed that the class of the intersection graphs of balls in ${\mathbb R}^3$ has infinite Assouad-Nagata dimension.

Now we show some concrete applications of \cref{asdim_space_filling_intro} in addition to \cref{ball_intro}.

We say that a metric space $(X,d)$ is a \emph{doubling space} if there exists $C>0$ such that for any $r>0$, every ball in $(X,d)$ of radius $r$ is contained in a union of at most $C$ balls of radius $r/2$; the infimum of such $C$ is called the \emph{doubling constant}.
Lang and Schlichenmaier \cite{LS05} showed that every doubling space has finite Assouad-Nagata dimension. 
In addition, for any $\eta>0$, we say that a subset $S$ of a metric space $(X,d)$ is \emph{$\eta$-round} if $S$ has finite diameter and for any $v \in S$ and $r \leq \diam(S)$, there exists $v' \in S$ such that $B(v',\eta r) \subseteq S \cap B(v,r)$.
Dvo\v{r}\'{a}k and Norin \cite{DvoNor25} observed that for any $\eta>0$ and $C>0$, there exists $f: {\mathbb R}_+ \rightarrow {\mathbb Z}_+$ such that every family of $\eta$-round sets in a metric space with doubling constant at most $C$ is $f$-space-filling.
This together with \cref{asdim_space_filling_intro} leads to the following corollary:

\begin{corollary} \label{round_intro}
Let $n$ be a positive integer, and let $(X,d)$ be a doubling space of Assouad-Nagata dimension at most $n$.
If $\F$ is the class of intersection graphs of families of $\eta$-round subsets of $(X,d)$, then $\asdim(\F) \leq n+1$.
\end{corollary}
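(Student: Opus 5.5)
The plan is to derive \cref{round_intro} directly from \cref{asdim_space_filling_intro}, using the observation of Dvo\v{r}\'{a}k and Norin quoted just above the statement. That observation supplies a function $f$ depending only on $\eta$ and the doubling constant $C$ of $(X,d)$ such that every family of $\eta$-round subsets of $X$ is $f$-space-filling.

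First, we check that the definitions match. Every $\eta$-round set has finite diameter by definition, which is the first requirement for being $f$-space-filling. For the counting condition, fix $x\in X$ and $r,s>0$, and take pairwise disjoint $\eta$-round members $S_1,\dots,S_k$ of diameter at least $s$, each meeting $B(x,r)$. Pick $v_i\in S_i\cap B(x,r)$ and apply roundness with radius $\min(s,r)\le \diam(S_i)$. This gives a ball $B(v_i',\eta\min(s,r))\subseteq S_i\cap B(v_i,\min(s,r))\subseteq B(x,2r)$.

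The centres $v_i'$ are pairwise at distance at least $\eta\min(s,r)$, since otherwise the corresponding balls, and hence the $S_i$, would intersect. So $\{v_i'\}$ is an $\eta\min(s,r)$-separated set inside $B(x,2r)$. Iterating the doubling property about $\log_2(4r/(\eta\min(s,r)))+O(1)$ times covers $B(x,2r)$ by boundedly many balls of radius less than $\eta\min(s,r)/2$, each containing at most one $v_i'$. Therefore $k\le C^{O(1+\log(r/s)+\log(1/\eta))}$, and this quantity depends only on $r/s$, $\eta$ and $C$. Call this bound $f(r/s)$.

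With $f$ in hand, every family of $\eta$-round subsets of $X$ is $f$-space-filling for this single $f$. Hence the class in \cref{round_intro} is contained in the class of intersection graphs of $f$-space-filling families of subsets of $X$. Since $X$ has Assouad-Nagata dimension at most $n$, \cref{asdim_space_filling_intro} bounds the asymptotic dimension of that larger class by $n+1$. Asymptotic dimension of a class is monotone under taking subclasses, because a common control function restricts to the subclass. This gives $\asdim(\F)\le n+1$.

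The only step that needs any care is the uniformity of $f$. The function must depend only on the ratio $r/s$, not on $r$ and $s$ separately. This holds because the packing estimate in a doubling space is scale-invariant: it depends only on the ratio of the ambient radius to the separation radius. The case $s>r$ is absorbed by using $\min(s,r)$, which gives a bound depending only on $\eta$ and $C$ in that regime.
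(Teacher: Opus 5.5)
Your proposal is correct and takes the same route as the paper, which simply combines Dvo\v{r}\'{a}k and Norin's observation (every family of $\eta$-round sets in a space of doubling constant at most $C$ is $f$-space-filling) with \cref{asdim_space_filling_intro}. The only difference is that you prove that observation yourself via the packing argument, using radius $\min(s,r)$ so that the bound depends only on $r/s$; the paper cites it instead.
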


Round sets are related to sets of bounded aspect ratio.
Let $n$ be a positive integer.
For a subset $S$ of ${\mathbb R}^n$, we define the \emph{height} of $S$ to be the infimum of $h>0$ such that $S$ is contained between two parallel hyperplanes at distance $h$, and we define the \emph{aspect ratio} of $S$ to be the ratio of the diameter of $S$ to the height of $S$.
Thin and long boxes have large aspect ratio; recall that some expanders of infinite asymptotic dimension are intersection graphs of families of thin and long boxes in ${\mathbb R}^3$.
Dvo\v{r}\'{a}k and Norin \cite{DvoNor25} observed that every compact convex set in ${\mathbb R}^n$ of aspect ratio at most $r$ is $\frac{1}{2rn}$-round.
It is easy to see that the Assouad-Nagata dimension of ${\mathbb R}^n$ is at most $n$, so \cref{round_intro} implies the following corollary:

\begin{corollary} \label{aspect_intro}
Let $n$ be a positive integer, and let $r \geq 1$ be a real number.
Then the class of intersection graphs of families of compact convex sets of aspect ratio at most $r$ in ${\mathbb R}^n$ has asymptotic dimension at most $n+1$.
\end{corollary}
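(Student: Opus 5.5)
The plan is to derive \cref{aspect_intro} directly from \cref{round_intro}, using the two facts quoted just before the statement. First, every compact convex set in ${\mathbb R}^n$ of aspect ratio at most $r$ is $\eta$-round with $\eta = \frac{1}{2rn}$; this is the observation of Dvo\v{r}\'{a}k and Norin. So a family of such sets is a family of $\eta$-round subsets of $X={\mathbb R}^n$ with the Euclidean metric, and $\eta$ depends only on $n$ and $r$. Second, I would verify the two hypotheses of \cref{round_intro} for ${\mathbb R}^n$.

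\textbf{Doubling.} A ball of radius $\rho$ is covered by at most $C_n$ balls of radius $\rho/2$, where $C_n$ depends only on $n$. To see this, take a maximal $\rho/2$-separated set inside the ball and compare volumes: disjoint balls of radius $\rho/4$ around its points lie in a ball of radius $5\rho/4$, so $C_n\le 5^n$.

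\textbf{Assouad-Nagata dimension at most $n$.} The paper calls this easy; I would supply the standard argument. One option is the classical coloring of a cubical tiling in which every point lies in at most $n+1$ cubes of a suitable cover. Another is to use that $\mathbb{R}$ has Assouad-Nagata dimension $1$ via alternating intervals, together with the product theorem for Assouad-Nagata dimension.

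For a direct construction, fix $r$ and tile ${\mathbb R}^n$ by cubes of side $L = c_n r$. Take the standard $(n+1)$-colored cover $\mathcal{U}_0,\dots,\mathcal{U}_n$, where $\mathcal{U}_k$ consists of suitably shrunk neighborhoods of the barycenters of $k$-dimensional faces, taken in a barycentric subdivision. Each $\mathcal{U}_k$ is $r$-disjoint once $c_n$ is large, and every set has diameter $O_n(r)$. This gives a dilation control function.

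\textbf{Conclusion.} Since any family of such convex sets is a family of $\frac{1}{2rn}$-round subsets of a doubling space of Assouad-Nagata dimension at most $n$, \cref{round_intro} gives an asymptotic dimension bound of at most $n+1$ for the class.

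One subtlety is that \cref{round_intro} is stated for a fixed $\eta$, while the class here ranges over all families with aspect ratio at most $r$. This is consistent, because $\eta$ depends only on $r$ and $n$.

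\textbf{Main obstacle.} The only non-routine point is the roundness claim. If it had to be reproved, I would argue as follows. Let $S$ be compact convex with diameter $D$ and height $h \ge D/r$. John's theorem gives a ball of radius about $h/(2n)$ inside $S$. For $v\in S$ and $\rho\le D$, shrink $S$ homothetically toward $v$ by the factor $\rho/D$. The image lies in $S\cap B(v,\rho)$ and contains a ball of radius about $\frac{\rho}{D}\cdot\frac{h}{2n}\ge \frac{\rho}{2rn}$.
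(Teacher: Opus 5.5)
Your proposal is correct and takes essentially the paper's route: combine the Dvo\v{r}\'{a}k--Norin observation that compact convex sets of aspect ratio at most $r$ are $\frac{1}{2rn}$-round with the facts that ${\mathbb R}^n$ is doubling and has Assouad--Nagata dimension at most $n$, then apply \cref{round_intro}. Your extra verifications are sound fillings-in of steps the paper only cites or calls easy: the $5^n$ doubling bound, the shrunk-star cover in the barycentric subdivision (or the product theorem), and the John's-theorem-plus-homothety argument for roundness, which recovers exactly the constant $\frac{1}{2rn}$.
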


Both \cref{round_intro} and \cref{aspect_intro} improve the $2n+1$ upper bound in \cite{DvoNor25}.

\subsection{Intersection graphs of boundaries}

Some natural graph classes of intersection graphs are not defined by space-filling families.
For example, one can consider intersection graphs of spheres in ${\mathbb R}^n$. 
Spheres in ${\mathbb R}^n$ do not form a space-filling family, so \cref{asdim_space_filling_intro} cannot be applied directly.
However, every sphere is the boundary of a ball, and balls form a space-filling family.

Our second main result (\cref{t:main}) shows that the asymptotic dimension of the intersection graph of a family of closed sets of a topological space does not change when the sets are replaced by their boundaries, under certain fairly mild and natural conditions.

The following notation will be used to formally state our result. 
Let $\mc{F}$ be a family of sets. 
We define the \emph{intersection asymptotic dimension of $\mc{F}$}, denoted by $\iasdim(\mc{F})$, to be the asymptotic dimension of the class of the intersection graphs of all (finite or infinite) subfamilies of $\mc{F}$.
In particular, \cref{ball_intro} is equivalent to the statement that if $\Se$ is the family of the balls in ${\mathbb R}^n$, then $n \leq \iasdim(\Se) \leq n+1$.
For a family $\mc{F}$ of subsets of a topological space, we define $\bd\mc{F}$ to be $\{ \bd{X}: X \in \mc{F}\}$, where each $\partial X$ denotes the boundary of $X$.

\begin{theorem}\label{t:main}
Let $T$ be a connected topological space.
If $\mc{F}$ is a family of subsets of $T$ such that
	\begin{enumerate}
		\item for every $S \in \mc{F}$, $S$ is closed, and $S$ and $\bd{S}$ are non-empty and connected, and
		\item $\bigcup_{S \in \mc{S}}S \neq T$ for every finite subfamily $\mc{S}$ of $\mc{F}$,
	\end{enumerate}	
then $$\iasdim(\mc{F}) \leq \iasdim(\bd\mc{F}) \leq \max\{\iasdim(\mc{F}),1\}.$$
\end{theorem}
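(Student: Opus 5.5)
The plan is to compare, for an arbitrary subfamily $\mc{F}' \subseteq \mc{F}$, the two graphs $G = I(\mc{F}')$ and $H = I(\bd\mc{F}')$ on the common vertex set $\mc{F}'$ (identifying $S$ with $\bd S$). Since $\bd S \subseteq S$ for closed $S$, we have $E(H) \subseteq E(G)$. The whole proof rests on understanding the edges of $G$ that are not edges of $H$.

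\textbf{Step 1: a topological lemma.} The key claim is that if $S, S' \in \mc{F}$ intersect but $\bd S \cap \bd S' = \emptyset$, then one of them is contained in the interior of the other. I would argue as follows.
\begin{itemize}
\item Suppose $S' \cap \bd S = \emptyset$. Then $S'$ is a connected set covered by the two disjoint open sets $\operatorname{int} S$ and $T \setminus S$. Since $S' \cap S \neq \emptyset$, we get $S' \subseteq \operatorname{int} S$.
\item Otherwise $\bd S$ meets $S'$ but avoids $\bd S'$. Because $\bd S$ is connected, $\bd S \subseteq \operatorname{int} S'$. Then $S \setminus \operatorname{int} S'$ and $\operatorname{int}S'$ should give a separation, forcing either $S \subseteq \operatorname{int} S'$ or $S \cup S' = T$.
\item The last alternative is excluded by hypothesis~(2) applied to $\{S,S'\}$. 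Connectedness of $T$ is used precisely here.
\end{itemize}
With the same kind of argument, I would show that the ``nesting with disjoint boundaries'' relation is laminar. Concretely, the unions of boundaries over the components of $H$ are connected sets, and for two components $C_1, C_2$ of $H$ either every member of one lies inside a single ``hole'' of a member of the other, or they are unrelated. Here hypothesis~(2) for finite subfamilies rules out the configuration where two components jointly cover $T$.

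\textbf{Step 2: tree structure.} Contract each component of $H$ to a single node and record the direct nesting relation. By laminarity this gives a forest $\mc{T}$. Every edge of $G \setminus E(H)$ joins two vertices lying in components that are adjacent in $\mc{T}$, or in the same component. Moreover, a set $S$ nested inside $S'$ is $G$-adjacent to every member of the component of $S'$ that contains $S$. So within $G$, each component $C$ of $H$ together with its children forms a region of bounded $G$-diameter relative to $C$.

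\textbf{Step 3: the two inequalities.}

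For $\iasdim(\bd\mc{F}) \leq \max\{\iasdim(\mc{F}),1\}$:
\begin{itemize}
\item Each component $C$ of $H$ is itself realized by a subfamily of $\mc{F}$.
\item I would show that for $S_1, S_2 \in C$, $\dist_H(S_1,S_2)$ is bounded by a function of $\dist_{I(C)}(S_1,S_2)$, where $I(C)$ is the intersection graph of the sets themselves. The reason is that a nesting shortcut inside $C$ can be replaced by going around the boundary of the containing set.
\item Hence the components of $H$ have $\asdim \le \iasdim(\mc{F})$ uniformly. The forest structure then contributes at most $\max\{\cdot,1\}$.
\end{itemize}

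For $\iasdim(\mc{F}) \leq \iasdim(\bd\mc{F})$: $G$ is obtained from $H$ by adding the nesting edges. Along $\mc{T}$, these act as a ``tree of spaces'' whose vertex spaces are the components of $H$, and the gluing is of bounded diameter in $G$. I would build covers for $G$ by taking covers of $H$ at a larger scale and merging each child component into the cover element of the parent region it is glued to. This should give dimension $\le \asdim H$ with no $+1$, since the tree levels are absorbed by the bounded-diameter gluing rather than adding a new color class. The union theorem for asymptotic dimension could serve as a fallback.

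\textbf{Main obstacle.} The hardest point is the uniform control in Step 3, for both directions. I need distances in $G$ and $H$ restricted to a component to be related by a function independent of the subfamily, and I need the merged covers in the second inequality to stay uniformly bounded even though a single set can contain unboundedly many nested components. I would first try to prove that any $G$-geodesic between two members of the same $H$-component can be rerouted inside that component with at most a bounded multiplicative loss, using the fact that entering a nested region forces the path to exit through the same container.
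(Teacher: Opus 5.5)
\textbf{Gap in the hard inequality.} The inequality $\iasdim(\partial\mc{F})\le\max\{\iasdim(\mc{F}),1\}$ rests on a false claim. You propose that for $S_1,S_2$ in one component $C$ of $H$, $\dist_H(S_1,S_2)$ is bounded by a function of $\dist_{I(C)}(S_1,S_2)$, with the fallback that $G$-geodesics can be rerouted inside the component with bounded multiplicative loss. The paper's own example in the introduction refutes both. Take the circles centred on the $x$-axis through $((-1)^i i,0)$ and $((-1)^{i+1}(i+1),0)$. The disks they bound are nested, so $I(C)$ is complete, while the circles form an infinite path in $H$.

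The forest cannot rescue this. $H$ has no edges between distinct components, so $\asdim H$ is just the uniform asymptotic dimension of its components, and $\mc{T}$ contributes nothing. If your claim held, it would give $\iasdim(\partial\mc{F})\le\iasdim(\mc{F})$, which the same example contradicts ($0$ versus $\geq 1$).

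The extra dimension is created \emph{inside a single boundary component}, by unbounded nesting depth among that component's own members. This is exactly what a proof must control, and nothing in your plan does so. The paper (\cref{t:main3}) uses an induction that extends a precoloring near a ``dominator'':
\begin{enumerate}
\item It takes the members $\mc{U}$ whose filled sets are $\subseteq$-maximal. For these, sets meet iff boundaries meet, so $I(\mc{U})$ is colored via the hypothesis on $\mc{F}$.
\item It assigns every other member to a maximal container $U$. Members of $\mc{S}_U$ at distance at most $4$ from $U$ in the boundary intersection graph of $N[\mc{S}_U]$ get the color of $U$. Those at distance exactly $5$ get a \emph{different} color; this is where the second color, i.e.\ the $\max\{\cdot,1\}$, is spent.
\item It recurses inside $N[\mc{S}_U]$ with $U$ as the new dominator.
\item Passage to powers $G^\ell$ is handled by replacing sets with unions of connected subfamilies of diameter at most $2\ell$, which are again augmentations.
\end{enumerate}

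\textbf{False claim in Step 2.} Edges of $G$ not in $H$ need not join components adjacent in $\mc{T}$. For concentric balls $B_1\subset B_2\subset B_3$, the components of $H$ are singletons forming a path in $\mc{T}$, yet $G$ is a triangle, because a set meets every set containing it. That observation is what makes the easy inequality short, so your tree-of-spaces merging (and the worry about unboundedly many nested components) is unnecessary. Every member meets some $\subseteq$-maximal member containing it. By your Step~1 lemma and hypothesis~(2), two maximal members intersect iff their boundaries do. So color the maximal members using the boundary graph at the appropriate power, and give every other member the color of a maximal container; this changes distances by at most $2$. This is the paper's \cref{easy_main2}.

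\textbf{Step 1.} Your conclusion is correct (it is Statement~4 of \cref{o:union}), but the second bullet should be argued as follows. If $\partial S'$ also meets $S$, then $\partial S'\subseteq\operatorname{int}S$ by connectivity. Hence $\partial(S\cup S')\subseteq\operatorname{int}(S\cup S')$, so $S\cup S'$ is clopen and therefore equals $T$, which hypothesis~(2) excludes. Otherwise apply your first bullet with the roles of $S$ and $S'$ swapped.
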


Note that the rightmost term $\max\{\iasdim(\mc{F}),1\}$ in \cref{t:main} cannot be replaced by $\iasdim(\mc{F})$. 
For example, if $\partial\F=\{C_i: i \in {\mathbb Z}_+\}$, where each $C_i$ is the circle in ${\mathbb R}^2$ intersecting the $x$-axis at the points $((-1)^i i,0)$ and $((-1)^{i+1}(i+1),0)$ and having the center at the $x$-axis, and $\F$ consists of the disks having members in $\partial\F$ as their boundary, then $\iasdim(\F)=0$ (since the members of $\F$ pairwise intersect), but $\iasdim(\partial\F) \geq 1$ (since the intersection graph $I(\partial \F)$ is a path of infinite length). 
In fact, it is easy to see that a class of graphs has asymptotic dimension 0 if and only if there exists $C>0$ such that every component of every graph in this class has diameter at most $C$.
So $\max\{\iasdim(\mc{F}),1\} \neq \iasdim(\F)$ only happens in a very restricted situation.

The following is an immediate corollary by the combination of \cref{ball_intro} and \cref{t:main}.

\begin{corollary} \label{sphere_intro}
Let $n \geq 2$ be a positive integer.
If $\F$ is the class of intersection graphs of families of spheres in ${\mathbb R}^n$, then $n \leq \asdim(\F) \leq n+1$.
\end{corollary}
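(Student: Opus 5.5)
We plan to obtain \cref{sphere_intro} by combining \cref{ball_intro} with \cref{t:main}, applied to the family of closed balls in $\bb{R}^n$, and by checking separately a lower bound of $n$ for spheres.

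\textbf{Upper bound.} Given any family $\Se$ of spheres in $\bb{R}^n$ with $n \geq 2$, we take $\F$ to be the family of closed balls whose boundaries are the members of $\Se$, so that $\bd\F=\Se$. We then verify the hypotheses of \cref{t:main} with $T=\bb{R}^n$, which is connected.
\begin{enumerate}
\item Each closed ball is closed, non-empty and connected.
\item Its boundary is a sphere of positive radius, which is non-empty and connected because $n\ge 2$. A degenerate ball of radius $0$ is a point whose boundary is itself, so it is also fine; alternatively we only consider spheres of positive radius.
\item Any finite union of balls is bounded, so it is not $\bb{R}^n$.
\end{enumerate}
\cref{t:main} then gives $\iasdim(\bd\F)\le \max\{\iasdim(\F),1\}$. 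By \cref{ball_intro}, $\iasdim(\F)\le n+1$, so $\iasdim(\bd\F)\le n+1$. To handle the whole class rather than a single family, we apply this to the family of all spheres in $\bb{R}^n$ at once. Every intersection graph of a family of spheres is then an intersection graph of a subfamily of $\bd\F_{\rm all}$, where $\F_{\rm all}$ is the family of all closed balls. Since families are multisets, repeated spheres are allowed.

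\textbf{Lower bound.} We use the left inequality $\iasdim(\F)\le\iasdim(\bd\F)$ of \cref{t:main} together with the lower bound $\iasdim(\F_{\rm all})\ge n$ from \cref{ball_intro}, which comes from $n$-dimensional grids realized by unit balls. This gives $\asdim \ge n$ for spheres.

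\textbf{Main point to check.} The only real issue is connectedness of the boundary, which fails for $n=1$, where the boundary is two points. This is exactly why the statement requires $n\ge 2$; the remaining verification is routine.
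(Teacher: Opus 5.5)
Your proposal is correct and is essentially the paper's argument: the paper calls this corollary an immediate combination of \cref{ball_intro} and \cref{t:main}, applied to the family of closed balls in $\mathbb{R}^n$. As you say, the balls have connected sphere boundaries exactly because $n \geq 2$, their finite unions are bounded, and both inequalities of \cref{t:main} are used together with \cref{ball_intro}. Your handling of multiplicities (passing to the family of all balls, with repeats allowed) is at the same level of informality as the paper's own identification of \cref{ball_intro} with a statement about $\iasdim$ of that family, so nothing is missing.
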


\cref{sphere_intro} was motivated by a question proposed by Georgakopoulos in the Banff workshop ``New Perspectives in Colouring and Structure'' in 2024.
When this paper was under preparation, Davies, Georgakopoulos, Hatzel and McCarty \cite{DGHM25} proved a weaker bound $\asdim(\F) \leq 2n+2$ and conjectured that $\asdim(\F)=n$. 

In fact, we prove a mild generalization of \cref{t:main} (\cref{t:main2}) as it turns out that the proof simplifies in a more general setting. 

For a subset $S$ of a topological space, we say that $W$ is an \emph{augmentation of $S$} if  $\bd {W} \subseteq S \subseteq W$.
Let $\mc{F},\mc{F}'$ be two families of subsets of a topological space. 
An \emph{augmentation of $\mc{F}$} is a bijection $\omega: \mc{F} \to \mc{F}'$ such that $\omega(S)$ is an augmentation of $S$ for every $S \in \mc{F}$.
We say that $\mc{F}'$ is an \emph{augmentation family for $\mc{F}$} if there exists an augmentation $\omega: \mc{F} \to \mc{F}'$ of $\mc{F}$. 
We say that $\mc{F}'$ is \emph{closed} if every element of $\mc{F}'$ is closed.

\begin{theorem} \label{t:main2}
Let $T$ be a connected topological space.
Let $\mc{F}$ and $\mc{F}'$ be two families of non-empty connected subsets of $T$ such that $\mc{F}'$ is a closed augmentation family for $\mc{F}$. 
If $\bigcup_{S \in \mc{S}} S \neq T$ for every finite subfamily $\mc{S}$ of $\mc{F}'$, then $$\iasdim(\mc{F}') \leq \iasdim(\mc{F}) \leq \max\{\iasdim(\mc{F}'),1\}.$$
\end{theorem}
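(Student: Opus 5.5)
The plan is to reduce everything to one topological lemma describing exactly when two members of $\mc{F}'$ meet, and then to compare the two intersection graphs metrically. Fix an augmentation $\omega:\mc{F}\to\mc{F}'$ and write $W_S=\omega(S)$. Since $S\subseteq W_S$, every edge of $I(\mc{S})$ for $\mc{S}\subseteq\mc{F}$ is an edge of $I(\omega(\mc{S}))$. The key lemma is the following: \emph{if $W_S\cap W_{S'}\neq\emptyset$ but $S\cap S'=\emptyset$, then $S\subseteq \operatorname{int}W_{S'}$ or $S'\subseteq\operatorname{int}W_S$.}

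To prove the lemma, first use connectedness. Since $S$ is connected and disjoint from $S'\supseteq\bd W_{S'}$, the set $S$ lies either in $\operatorname{int}W_{S'}$ or in $T\setminus W_{S'}$; the symmetric statement holds for $S'$. Suppose both lie outside, i.e.\ $S\cap W_{S'}=\emptyset$ and $S'\cap W_S=\emptyset$. Then $\bd W_S\cap W_{S'}=\emptyset$ and $\bd W_{S'}\cap W_S=\emptyset$. Hence every point of $W_S\cap W_{S'}$ lies in $\operatorname{int}W_S\cap\operatorname{int}W_{S'}$. So $W_S\cap W_{S'}$ is nonempty, closed (since $\mc{F}'$ is closed) and open, and therefore equals $T$ by connectedness of $T$. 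This contradicts the hypothesis $W_S\neq T$, which is the finite-union condition applied to $\{W_S\}$. I also expect to need a nesting lemma: if $S\subseteq\operatorname{int}W_{S'}$ then $W_S\subseteq W_{S'}$. The plan is a similar clopen argument applied to $W_S\setminus \operatorname{int}W_{S'}$ and $W_S\cup W_{S'}$, using $W_S\cup W_{S'}\neq T$ and the connectedness of $S'$. The upshot is that the ``extra'' edges of $I(\omega(\mc{S}))$ come from a laminar containment relation among the sets $W_S$.

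\textbf{First inequality} ($\iasdim(\mc{F}')\le\iasdim(\mc{F})$). Call $S\in\mc{S}$ \emph{outer} if $S$ is not contained in $\operatorname{int}W_{S'}$ for any other $S'\in\mc{S}$ (passing to a maximal element of a chain where needed, or handling infinite chains by a limiting argument). Every non-outer vertex is adjacent in $I(\omega(\mc{S}))$ to some outer vertex containing it. Between outer vertices, an edge of $I(\omega(\mc{S}))$ is an edge of $I(\mc{S})$, and a path in $I(\omega(\mc{S}))$ that detours through an inner vertex can be rerouted at bounded cost. Indeed, if $W_{S''}$ meets $W_{S_1}$ and $W_{S''}\subseteq W_{S'}$, then $S_1$ meets $S'$ or is nested in it. Thus distances among outer vertices in $I(\omega(\mc{S}))$ and in $I(\mc{S}_{\mathrm{outer}})$ agree up to a multiplicative constant. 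A control function for $\mc{F}$ then transfers: cover $I(\mc{S}_{\mathrm{outer}})$ and attach each inner vertex to the set of a chosen outer vertex containing it, at the cost of increasing diameters by $2$.

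\textbf{Second inequality} ($\iasdim(\mc{F})\le\max\{\iasdim(\mc{F}'),1\}$). Here distance in $I(\mc{S})$ can be much larger than in $I(\omega(\mc{S}))$. The plan is to view $I(\mc{S})$ as fibred over $I(\omega(\mc{S}))$ via the identity map, which is $1$-Lipschitz. By the Hurewicz-type mapping theorem for asymptotic dimension (Bell--Dranishnikov), it suffices to show that preimages of bounded sets have asymptotic dimension at most $1$ uniformly. A set of $\omega$-diameter at most $R$ consists of sets $S$ organised by the laminar nesting forest of the $W_S$'s: each $S$ meets only sets at its own level of the same parent, or sets one level up or down. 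I would show such pieces are coarsely tree-like, i.e.\ quasi-isometric to a tree with uniformly bounded-diameter blobs. This would give asymptotic dimension at most $1$, the example with circles showing that $1$ is genuinely needed. This fibre estimate, namely controlling the $I(\mc{S})$-geometry inside an $\omega$-ball uniformly in $R$, is the step I expect to be the main obstacle.
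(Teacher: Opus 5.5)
Your first inequality is essentially the paper's \cref{easy_main2} in different words. Your outer members play the role of the $\subseteq$-maximal members of $\omega(\mc{S})$, Statement~4 of \cref{o:union} identifies their two intersection graphs, and mapping each member to an outer one containing it costs an additive $2$. Rather than a limiting argument for infinite chains, reduce to finite subfamilies as the paper does via \cref{finite_iasdim}.

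The second inequality has a genuine gap, and it goes beyond the unproved fibre estimate: the Bell--Dranishnikov Hurewicz-type theorem is \emph{additive}. Uniform fibre dimension at most $1$ for the $1$-Lipschitz map $I(\mc{S})\to I(\omega(\mc{S}))$ yields only $\asdim I(\mc{S})\le \asdim I(\omega(\mc{S}))+1$. That is $\iasdim(\mc{F})\le \iasdim(\mc{F}')+1$, not $\max\{\iasdim(\mc{F}'),1\}$; for spheres versus balls in $\mathbb{R}^n$ it gives $n+2$ rather than $n+1$. No sharper fibre bound can rescue this. In your own circle example the whole infinite path $I(\mc{F})$ is the fibre over the complete graph $I(\mc{F}')$, so fibres of asymptotic dimension exactly $1$ really occur. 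Your picture of the fibres is also inaccurate: a segment crossing three concentric circles is an outer member meeting members at nesting depths $0$, $1$ and $2$.

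The missing idea is a recursion that reuses the same $k\ge 2$ colours at every nesting level instead of paying for an extra dimension; this is the paper's \cref{t:main3}. The construction runs as follows.
\begin{enumerate}
\item Colour the outer family $\mc{U}$ using $I(\mc{U})\cong I(\omega(\mc{U}))$.
\item A member of the class $\mc{S}_U$ nested in $U$ gets the colour of $U$ if it is within distance $4$ of $U$ in $G_U=I(N_G[\mc{S}_U])$. At distance exactly $5$ it gets a different colour, which acts as a buffer layer.
\item Recurse inside each $G_U$, keeping the invariant that the precoloured part lies within distance $5$ of a ``dominator''. Claim~2 there shows that $N_G[\mc{S}_U]\setminus\mc{S}_U$ lies within distance $3$ of $U$.
\end{enumerate}
A monochromatic connected set of large weak diameter then has two options. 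Either it never goes five steps into any $\mc{S}_U$, and then it projects to a monochromatic connected set of $I(\mc{U})$, which is bounded by hypothesis. Or it is trapped in a single $\mc{S}_U$ by the buffer layer and is bounded by induction. This gives weak diameter at most $2r+32$ with the same $k$ colours.

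Powers $I(\mc{S})^{\ell}$ are then handled by replacing $\mc{S}$ with its $2\ell$-shallow unions (\cref{o:union2}). That is exactly where the hypothesis on arbitrary finite unions is used, rather than only single sets and pairs as in your key lemmas.
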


Clearly $\mc{F}$ is an augmentation family for $\bd\mc{F}$, so \cref{t:main2} implies \cref{t:main}.   

\cref{t:main2} implies a strengthening of \cref{sphere_intro}: if $n \geq 2$ is a positive integer, and $\F$ is a family of non-empty connected subsets of ${\mathbb R}^n$, where each member of $\F$ is obtained from a closed ball in ${\mathbb R}^n$ by removing a subset of its interior, then $n \leq \iasdim(\F) \leq n+1$, since some family $\F'$ of closed balls in ${\mathbb R}^n$ is a closed augmentation family of $\F$.

\subsection{Organization}

In \cref{sec:preliminary}, we first introduce some basic terminologies in graph theory, and then prove simple results about augmentation of sets and asymptotic dimension.
We prove \cref{t:main2} in \cref{sec:augmentation_families} by assuming a key lemma, and then prove the key lemma in \cref{sec:pf_main3}.
The next goal is to prove \cref{asdim_space_filling_intro}.
We develop machinery for tree-decomposition and weak diameter coloring of metric spaces in \cref{s:treedec} and prove \cref{asdim_space_filling_intro} in \cref{sec:asdim_space_filling}.
Finally, we prove \cref{asdim_space_filling_lower_intro} in \cref{sec:lower_bound}.

\section{Preliminaries} \label{sec:preliminary}

We first define notations that will be frequently used in this paper.
Let $G$ be a graph, and let $S$ be a subset of $V(G)$.
We denote by $G[S]$ the subgraph of $G$ induced by $S$.
We define $N_G(S)$ to be the set consisting of the vertices in $V(G)-S$ adjacent in $G$ to some vertex in $S$, and we define $N_G[S] = N_G(S) \cup S$.
For an integer $k$, we define $[k] = \{x \in {\mathbb Z}: 1 \leq x \leq k\}$.
For any function $f$ and subset $X$ of the domain of $f$, we denote by $f|_X$ the restriction of $f$ to $X$.

In the rest of this section, we introduce some simple properties for augmentations and asymptotic dimension and prove an easier part of \cref{t:main2} as a warm up.
We start by the following useful lemma relating properties of sets and their augmentations.

\begin{lemma} \label{o:union}	
Let $\mc{S}$ be a finite collection of non-empty connected subsets of a connected topological space $T$.
For every $X \in \mc{S}$, let $\omega(X)$ be a connected closed augmentation of $X$. 
Then the following statements hold.
	\begin{enumerate}
		\item $\bigcup_{X \in \mc{S}}\omega(X)$ is an augmentation of $\bigcup_{X \in \mc{S}}X$.
		\item For any $X,Y \in \mc{S}$, if $X \cap Y \neq \emptyset$, then $\omega(X) \cap \omega(Y) \neq \emptyset$.
		\item If $I(\mc{S})$ is connected, then $\bigcup_{X \in \mc{S}}X$ and $\bigcup_{X \in \mc{S}}\omega(X)$ are connected.
		\item For any $X,Y \in \mc{S}$, if $X \cap Y = \emptyset$, then 
			\begin{enumerate}
				\item $\omega(X) \cap \omega(Y) = \emptyset$, or 
				\item $\omega(X) \cup \omega(Y) = T$, or
				\item $\omega (X) \subsetneqq \omega(Y)$ and $Y \cap \omega(X)=\emptyset$, or
				\item $\omega (Y) \subsetneqq \omega(X)$ and $X \cap \omega(Y)=\emptyset$.
			\end{enumerate}
	\end{enumerate}	 
\end{lemma}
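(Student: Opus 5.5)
We prove the four statements in order, using only basic point-set topology. The facts we need are these: $\partial A = \overline{A}\setminus \operatorname{int}A$; $\partial(\bigcup A_i)\subseteq \bigcup \partial A_i$ for finite unions; a closed set $W$ has $\partial W = W\setminus\operatorname{int}W$; and in a connected space $T$, a subset $C$ with $\emptyset\neq C\neq T$ has $\partial C\neq\emptyset$.

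For (1), set $W=\bigcup_{X\in\mc{S}}\omega(X)$ and $U=\bigcup_{X\in\mc{S}}X$. Since $X\subseteq\omega(X)$ for each $X$, we get $U\subseteq W$. By finiteness, $\partial W\subseteq\bigcup_X\partial\omega(X)\subseteq\bigcup_X X=U$. Statement (2) is immediate, because $X\cap Y\subseteq\omega(X)\cap\omega(Y)$.

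For (3), connectedness of $I(\mc{S})$ lets us order the sets so that each new set meets one already added. The union of two intersecting connected sets is connected, so induction shows $U$ is connected. By (2), $I(\omega(\mc{S}))$ is also connected, and the same induction applies to $W$.

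The main work is (4). Assume $X\cap Y=\emptyset$ and $\omega(X)\cap\omega(Y)\neq\emptyset$. The key step is to show that $X\cap\omega(Y)\neq\emptyset$ forces $X\subseteq\operatorname{int}\omega(Y)$ or $\omega(Y)=T$, and symmetrically with the roles of $X$ and $Y$ swapped. To see this, note that $X$ is connected and is disjoint from $\partial\omega(Y)\subseteq Y$. Hence $X$ is covered by the two disjoint open sets $\operatorname{int}\omega(Y)$ and $T\setminus\omega(Y)$, the latter being open because $\omega(Y)$ is closed. So $X$ lies in one of them. If $X$ meets $\omega(Y)$, then $X\subseteq\operatorname{int}\omega(Y)$.

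Next consider $\partial\omega(X)\subseteq X$. If it is nonempty and $X\subseteq\operatorname{int}\omega(Y)$, we aim for $\omega(X)\subseteq\omega(Y)$ by the same trick applied to the connected set $\omega(X)$. We cannot use that trick directly, since $\omega(X)$ may meet $Y$. Instead, use connectedness of $T\setminus\operatorname{int}\omega(X)$ piece by piece, or argue as follows. The set $\omega(Y)\setminus\omega(X)$ together with $\omega(X)$ shows that $\omega(X)\cup\omega(Y)$ is either $T$ or a proper subset. If it is proper, consider $Y$: it avoids $\partial\omega(X)\subseteq X$. So either $Y\subseteq\operatorname{int}\omega(X)$ or $Y\cap\omega(X)=\emptyset$.

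This gives four combinations of ($X$ inside or outside $\omega(Y)$) and ($Y$ inside or outside $\omega(X)$).

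Case 1: $X\cap\omega(Y)=\emptyset$ and $Y\cap\omega(X)=\emptyset$. Then $\omega(X)\cap\omega(Y)$ avoids $\partial\omega(X)\cup\partial\omega(Y)$. So $\omega(X)\cap\omega(Y)=\operatorname{int}\omega(X)\cap\operatorname{int}\omega(Y)$, which is open. It is also closed, being an intersection of closed sets, and it is nonempty by assumption. Since $T$ is connected, it equals $T$, which gives (b).

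Case 2: $X\subseteq\operatorname{int}\omega(Y)$ and $Y\cap\omega(X)=\emptyset$. We show $\omega(X)\subseteq\omega(Y)$. The set $\omega(X)\setminus\omega(Y)$ has boundary inside $\partial\omega(X)\cup\partial\omega(Y)\subseteq X\cup Y$. It contains no point of $X$, since $X\subseteq\operatorname{int}\omega(Y)$. The remaining worry is that its boundary meets $Y$. But a boundary point of $\omega(X)\setminus\omega(Y)$ lies in $\overline{\omega(X)}=\omega(X)$, and $\omega(X)$ is disjoint from $Y$. Hence $\omega(X)\setminus\omega(Y)$ is clopen. If it were all of $T$, then $\omega(Y)=\emptyset$, contradicting $Y\neq\emptyset$. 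So it is empty and $\omega(X)\subseteq\omega(Y)$. The inclusion is strict because $Y\subseteq\omega(Y)\setminus\omega(X)$. This gives (c).

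Case 3: the case symmetric to Case 2 gives (d).

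Case 4: $X\subseteq\operatorname{int}\omega(Y)$ and $Y\subseteq\operatorname{int}\omega(X)$. Set $Z=T\setminus(\operatorname{int}\omega(X)\cap\operatorname{int}\omega(Y))$. If $Z$ were nonempty, the boundary of the open set $\operatorname{int}\omega(X)\cap\operatorname{int}\omega(Y)$ would lie in $\partial\omega(X)\cup\partial\omega(Y)\subseteq X\cup Y$. But $X\cup Y$ lies inside this open set, so its boundary is empty. The set is nonempty because it contains $X$, so connectedness of $T$ makes it all of $T$, which gives (b).

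We expect the main obstacle to be the careful boundary bookkeeping in the mixed Case 2, where one must use closedness of $\omega(X)$ to rule out boundary points lying in $Y$.
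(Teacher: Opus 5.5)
Your arguments for Statements 1--3 and for Cases 1--3 of Statement 4 are correct, but Case 4 rests on a false claim. You assert that $X\cup Y$ lies inside the open set $O=\operatorname{int}\omega(X)\cap\operatorname{int}\omega(Y)$, and from this you conclude $O=T$. However, $X\supseteq\partial\omega(X)$, and no point of $\partial\omega(X)$ lies in $\operatorname{int}\omega(X)$, so in general $X\not\subseteq O$.

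Concretely, take $T=\mathbb{R}$, $X=\{0\}$, $\omega(X)=[0,\infty)$, $Y=\{1\}$, $\omega(Y)=(-\infty,1]$. This satisfies all hypotheses and falls into your Case 4, since $0\in(-\infty,1)$ and $1\in(0,\infty)$. Yet $O=(0,1)\neq T$, and $\partial O=X\cup Y\neq\emptyset$.

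The repair, which is exactly the paper's argument, is to use the union instead of the intersection. The set $W=\omega(X)\cup\omega(Y)$ is closed with $\partial W\subseteq\partial\omega(X)\cup\partial\omega(Y)\subseteq X\cup Y$. Moreover $X\subseteq\operatorname{int}\omega(Y)\subseteq\operatorname{int}W$ and $Y\subseteq\operatorname{int}\omega(X)\subseteq\operatorname{int}W$. Since $\partial W$ and $\operatorname{int}W$ are disjoint, $\partial W=\emptyset$. So $W$ is clopen and non-empty, hence $W=T$, which is outcome (b).

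Once Case 4 is fixed, your proof is complete, and in the remaining cases it differs usefully from the paper's. The paper first reduces to $X\cap\omega(Y)=\emptyset$. It then uses connectedness of $\omega(Y)$ to get $\omega(X)\cap\omega(Y)=\emptyset$ or $\omega(Y)\subseteq\omega(X)$, and has to treat $\omega(X)=\omega(Y)$ separately. You instead show directly that $\omega(X)\cap\omega(Y)$ (Case 1) or $\omega(X)\setminus\omega(Y)$ (Case 2) is clopen. This uses only connectedness of $X$, $Y$ and $T$, not of the augmentations. It also gives strictness of the inclusion for free, from $Y\subseteq\omega(Y)\setminus\omega(X)$.

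Do tidy up the paragraph beginning ``Next consider $\partial\omega(X)\subseteq X$'', which contains abandoned attempts. All it needs to say is that the dichotomy for $Y$ (either $Y\subseteq\operatorname{int}\omega(X)$ or $Y\cap\omega(X)=\emptyset$) follows symmetrically to the one for $X$.
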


\begin{proof}
Since $\Se$ is finite, we know that $\bigcup_{X \in \mc{S}}\omega(X)$ is closed.
Hence $$\bd{\s{\bigcup_{X \in \mc{S}}\omega(X)}} \subseteq \bigcup_{X \in \mc{S}}\bd(\omega(X)) \subseteq  \bigcup_{X \in \mc{S}}X \subseteq \bigcup_{X \in \mc{S}}\omega(X),$$
so Statement 1 holds.
	 
Statement 2 clearly holds since $\omega(X) \cap \omega(Y)  \supseteq X \cap Y \neq \emptyset$ for any $X \cap Y \neq \emptyset$.	
	 
Now we show Statement 3.
Note that the union of two connected sets with non-empty intersection is connected.
Since $\Se$ is finite, if $I(\mc{S})$ is connected, then $\bigcup_{X \in \mc{S}}X$ and $\bigcup_{X \in \mc{S}}\omega(X)$ are connected by Statement 2.

Finally, we prove Statement 4.

Let $X,Y \in \mc{S}$ with $X \cap Y = \emptyset$.
Let $X'=\omega(X)$ and $Y'=\omega(Y)$ for brevity.  
As $X \cap Y = \emptyset$ and $\partial Y' \subseteq Y$, we have $X \cap \partial Y'= \emptyset$, so $X \cap Y' = X \cap \brm{int} Y'$. 
Since $X$ is connected, it follows that $X \cap Y' = \emptyset$ or $X \subseteq \brm{int} Y'$. 
Indeed, otherwise, $X -Y'$ and $X \cap Y' = X \cap \brm{int} Y'$ are two non-empty relatively open sets in $X$ with the union $X$, contradicting connectivity of $X$. 
Symmetrically, we have $Y \cap X' = \emptyset$ or $Y \subseteq \brm{int} X'$.
	 
We first assume that $X \subseteq \brm{int}Y'$ and $Y \subseteq  \brm{int} X'$.
Then $$\bd(X' \cup Y') \subseteq \bd X' \cup \bd Y' \subseteq X \cup Y \subseteq \brm{int}(X' \cup Y').$$
It implies that $X' \cup Y'$ is open.
Since $X'$ and $Y'$ are closed, $X' \cup Y'$ is closed. 
As $T$ is connected it follows that $X' \cup Y' = T$, implying that the outcome (b) holds. 
	 
Thus we may assume without loss of generality that $X \not \subseteq \brm{int}Y'$.
So $X \cap Y' = \emptyset$. 
Hence $(\partial X') \cap Y'\subseteq X \cap Y'=\emptyset$.
So $Y' \cap X' = Y' \cap \brm{int} X'$.
If $X' \cap Y' \neq \emptyset$ and $Y' \not \subseteq X'$, then $Y' \cap X' = Y' \cap \brm{int} X'$ and $Y'-X'$ is a cover of $Y'$ by two disjoint non-empty relatively open sets, contradicting connectivity of $Y'$.
Hence $X' \cap Y' = \emptyset$ or $Y' \subseteq X'$.
The former implies (a).
The latter implies (d) unless $X'=Y'$.
So we may assume $X'=Y'$.
Then $\partial X'=\emptyset$ since $X'=Y'$ is closed and $(\partial X') \cap Y'=\emptyset$.
So $X'$ is both open and closed.
Hence $X'=T$ or $X'=\emptyset$ since $T$ is connected. 
Since $X'=Y'$, (a) or (b) holds.
\end{proof}

Next we state and prove some preliminary results for asymptotic dimension.
The following special case of \cite[Theorem A.2]{BBEGLPS23} shows that it suffices to study finite graphs when studying the asymptotic dimension of finite or infinite graphs.

\begin{theorem}[{\cite[Theorem A.2]{BBEGLPS23}}] \label{finite_asdim}
For every function $f: \mathbb{Z}_+ \rightarrow \mathbb{Z}_+$, there exists a function $g: \mathbb{Z}_+ \rightarrow \mathbb{Z}_+$ such that for any $n \in \mathbb{Z}_+$ and finite or infinite graph $G$, if $f$ is an $n$-dimensional control function of the set of all finite induced subgraphs of $G$, then $g$ is an $n$-dimensional control function of $G$.
\end{theorem}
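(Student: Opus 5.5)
This statement is a compactness theorem: a control function that works uniformly for all finite induced subgraphs of $G$ should, after a controlled loss, also work for $G$ itself. My plan is a Tychonoff-type argument. I would take $g(r) = f(r)$, or $f(r)$ plus a small additive slack if the boundary cases force it. The obstacle is that covers of finite subgraphs need not fit together. Components of $G$ can be handled separately, since $r$-disjointness between different components is automatic when the distance is infinite. So we may assume $G$ is connected, hence countable if it is locally finite. In general I would work with arbitrary vertex sets and use compactness over all of $V(G)$.

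\textbf{Step 1: encode a cover as a coloring with bounded-diameter classes.} Fix $r$. A cover $\U = \bigcup_{i\in[n+1]} \U_i$ with $r$-disjoint $\U_i$ and diameters at most $f(r)$ can be refined to a partition, by assigning each vertex one color $i$ and one member of $\U_i$ containing it. Let the pieces be the color classes intersected with members. The data then consists of a color $c(v)\in[n+1]$ for each vertex, together with the equivalence relation ``$u \sim v$ iff $u,v$ lie in the same piece''. The constraints are local. Same-piece vertices have $\dist_{G[H]}(u,v)\le f(r)$. Distinct pieces of the same color are at distance greater than $r$. Every vertex gets a piece.

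\textbf{Step 2: make the constraints finitary.} The subtle point is that distances in a finite induced subgraph $H$ exceed distances in $G$. I would apply the hypothesis not to arbitrary finite $H$ but to $H = G[B]$, where $B$ is the union of balls of radius $R = f(r)+r+1$ around a finite set $F$. For $u,v$ near $F$, distances up to about $f(r)+r$ are then realized inside $H$ by geodesics in $G$. In the limit this yields genuine $G$-distance bounds. I would also restrict attention to configurations on $F$ itself.

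\textbf{Step 3: compactness.} Each such $H$ gives a valid assignment on $F$ of colors and of the relation $\sim$, both from finite alphabets per pair. By Tychonoff's theorem (or König's lemma in the countable case) applied to the product space $([n+1]\times\{0,1\})^{V\cup V^2}$, these assignments have a limit. Each constraint involves finitely many vertices, so it is closed, and the limit is a valid coloring of all of $G$. The pieces have $G$-diameter at most $f(r)$. They are $r$-disjoint within each color class because the $\le r$ witness paths lie in $H$. Hence $g=f$ works, possibly with $+1$ slack.

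\textbf{Main obstacle.} I expect the hardest part to be checking that the relation $\sim$ stays transitive in the limit and that pieces are well defined; closedness handles this, because transitivity constrains only triples. The dependence of $g$ only on $f$ holds because $g=f$ up to constants.
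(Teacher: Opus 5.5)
\textbf{Gap in Step~2: the construction of $H$ fails for non-locally-finite graphs.}
Your overall strategy is sound: encode a cover as a color plus an equivalence relation, check the constraints on finite sets, and pass to $G$ by compactness. (The paper gives no proof of this statement; it is quoted from \cite{BBEGLPS23}.) However, you take $H=G[B]$ with $B$ the union of balls of radius $R$ around a finite set $F$, and this $B$ is finite only when $G$ is locally finite. The theorem is stated for arbitrary graphs. The paper applies it, via \cref{finite_iasdim} and in the proof of \cref{asdim_space_filling}, to intersection graphs of arbitrary subfamilies of $\F$, and these are typically not locally finite. A large ball meets infinitely many small ones, and since families are multisets, a set repeated infinitely often gives an infinite clique. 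For such $G$ your $G[B]$ is infinite, so the hypothesis, which concerns only finite induced subgraphs, says nothing about it. Your remark about ``working with arbitrary vertex sets'' addresses uncountability of $V(G)$, not this problem.

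\textbf{The repair.}
Let $H_F$ be the subgraph induced by $F$ together with the vertices of one shortest path of $G$ for each pair $u,v\in F$ with $\dist_G(u,v)\le r$. Then $H_F$ is finite, and $\dist_{H_F}(u,v)=\dist_G(u,v)$ for every such pair. This is all the argument uses, so the large radius $R=f(r)+r+1$ is unnecessary.
\begin{itemize}
\item The diameter constraint transfers for free, because $\dist_G\le\dist_{H_F}$.
\item A same-colored pair in different pieces with $\dist_G(u,v)\le r$ would have $\dist_{H_F}(u,v)\le r$, contradicting $r$-disjointness in $H_F$.
\end{itemize}

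\textbf{Phrasing the compactness step.}
Rather than taking ``a limit'' of the assignments (for uncountable $V(G)$ this would require nets, not sequences), let $K_F$ be the set of global assignments satisfying the constraints on $F$. Each $K_F$ is a nonempty clopen cylinder set, and $K_{F_1\cup F_2}\subseteq K_{F_1}\cap K_{F_2}$. So the family has the finite intersection property, and Tychonoff gives a point in $\bigcap_F K_F$.

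With these changes the argument is complete. It yields $g=f$ exactly, with no slack, since the constraints involve only integer graph distances and pass to the intersection unchanged.
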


Based on \cref{finite_asdim}, we introduce the restricted version of $\iasdim$ that only considers intersection graphs of finite subfamilies.
For a family $\mc{F}$ of sets, we define $\iasdim_{<\infty}(\mc{F})$ to be the asymptotic dimension of the family of the intersection graphs of all finite subfamilies of $\mc{F}$.

\begin{lemma} \label{finite_iasdim}
If $\F$ is a family of sets, then $\iasdim(\F)=\iasdim_{<\infty}(\F)$.
\end{lemma}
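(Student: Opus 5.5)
The inequality $\iasdim_{<\infty}(\F) \leq \iasdim(\F)$ is immediate. Every intersection graph of a finite subfamily of $\F$ is an intersection graph of a subfamily of $\F$. So the class used to define $\iasdim_{<\infty}(\F)$ is a subclass of the class used to define $\iasdim(\F)$, and any $n$-dimensional control function for the larger class also works for the smaller one. Equivalently, the disjoint union of the smaller class is a subspace of the disjoint union of the larger class (distances between different components being infinite), and asymptotic dimension is monotone under taking subspaces.

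For the reverse inequality, I would let $n = \iasdim_{<\infty}(\F)$, which we may assume is finite, and fix an $n$-dimensional control function $f$ for the class of intersection graphs of finite subfamilies of $\F$. Now take an arbitrary subfamily $\mc{S} \subseteq \F$ and let $G = I(\mc{S})$. The key observation is that every finite induced subgraph of $G$ is the intersection graph of a finite subfamily of $\mc{S}$, hence of $\F$. Indeed, for $\mc{S}_0 \subseteq \mc{S}$ finite, $G[\mc{S}_0] = I(\mc{S}_0)$, because adjacency in an intersection graph depends only on the two sets involved. Therefore $f$ is an $n$-dimensional control function for the set of all finite induced subgraphs of $G$.

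By \cref{finite_asdim}, the function $g$ obtained from $f$ is then an $n$-dimensional control function of $G$. The crucial point is that $g$ depends only on $f$, and not on $G$ or $\mc{S}$. Hence a single $g$ serves uniformly for every $I(\mc{S})$ with $\mc{S} \subseteq \F$, which gives $\iasdim(\F) \leq n$.

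The only step needing care is the uniformity just noted: the control function must not depend on the particular subfamily. \cref{finite_asdim} supplies exactly this, since its $g$ is determined by $f$ alone.

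There is also a small bookkeeping issue in passing between a class of graphs and the disjoint union of that class. A cover of the disjoint union restricts to covers of each member graph, with $r$-disjointness and diameter bounds preserved. Conversely, a uniform family of covers of the member graphs assembles into a cover of the disjoint union, since distinct components are at infinite distance. This assembly also handles the case where $\F$ is infinite and the class is a proper class-like collection, because we only ever need one control function valid for all members. Finally, if $\iasdim_{<\infty}(\F)=\infty$, the inequality is trivial.
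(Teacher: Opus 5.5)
Your proposal is correct and follows essentially the same route as the paper: the easy inequality comes from the finite-subfamily class being a subclass of the full class. For the reverse inequality, every finite induced subgraph of $I(\mc{S})$ equals $I(\mc{S}_0)$ for some finite $\mc{S}_0 \subseteq \F$, and \cref{finite_asdim} then supplies a control function $g$ that depends only on $f$, so it works uniformly for all subfamilies.
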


\begin{proof}
Let $\A$ be the family of the intersection graphs of all subfamilies of $\F$.
Let $\B$ be the family of the intersection graphs of all finite subfamilies of $\F$.
Then $\iasdim(\F)=\asdim(\A)$ and $\iasdim_{<\infty}(\F)=\asdim(\B)$.
Hence it suffices to show $\asdim(\A)=\asdim(\B)$.

Clearly, $\B \subseteq \A$.
So $\asdim(\B) \leq \asdim(\A)$.

Now we show $\asdim(\A) \leq \asdim(\B)$.
We may assume $\asdim(\B)<\infty$, for otherwise we are done.
So there exists an $\asdim(\B)$-dimensional control function $f$ of $\B$.
Let $g$ be the function that only depends on $f$ as stated in \cref{finite_asdim}.
For every $G \in \A$, every finite induced subgraph of $G$ is an intersection graph of a finite subfamily of $\F$ and hence is in $\B$, so $f$ is an $\asdim(\B)$-dimensional control function of the set of all finite induced subgraphs of $G$, implying that $g$ is an $\asdim(\B)$-dimensional control function of $G$ by \cref{finite_asdim}.
Since $g$ is independent from $G$, we know that $g$ is an $\asdim(\B)$-dimensional control function of $\A$.
So $\asdim(\A) \leq \asdim(\B)$.
\end{proof}

It is convenient to work with asymptotic dimension of graphs by using an equivalent definition stated in terms of weak diameter coloring.
A \emph{coloring} of a graph $G$ is a function with domain $V(G)$; for a positive integer $k$, a \emph{$k$-coloring} is a coloring whose codomain is $[k]$. 
Given a coloring $c$ of a graph $G$, we say that a subgraph $H$ of $G$ is \emph{$c$-monochromatic} if all its vertices receive the same color in $c$; a \emph{$c$-monochromatic component} in $G$ is a maximal $c$-monochromatic connected subgraph of $G$.
The \emph{weak diameter in $G$} of a subgraph $H$ of $G$ is $\sup_{u,v \in V(H)}\dist_G(u,v)$. 
The \emph{weak diameter in $G$} of a coloring $c$ of $G$ is defined to be the supremum of the weak diameter of a $c$-monochromatic component of $G$.

For a positive integer $\ell$, the {\it $\ell$-th power of $G$}, denoted by $G^\ell$, is the graph with $V(G^\ell)=V(G)$ such that two distinct vertices $u,v$ are adjacent in $G^\ell$ if and only if $\dist_G(u,v) \leq \ell$. The following lemma establishes a connection between the asymptotic dimension of a graph class and the weak diameter of colorings of powers of graphs in the class.

\begin{lemma}[{\cite[Proposition 1.17]{BBEGLPS23}}] \label{l:asweak}
Let $k$ be a positive integer.
Then a class $\mc{G}$ of graphs has asymptotic dimension at most $k-1$ if and only if there exists a function $f:\mathbb{Z}_+\to\mathbb{Z}_+$ such that for every positive integer $\ell$ and for every $G\in\mc{G}$, the graph $G^{\ell}$ has a $k$-coloring of weak diameter in $G^\ell$ at most $f(\ell)$.
\end{lemma}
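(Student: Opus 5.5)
The plan is to prove both directions directly from the definitions, converting between covers by $r$-disjoint families and colorings of $G^\ell$. The only computation needed is the distance comparison: for $u,v\in V(G)$ and a positive integer $\ell$,
\[
\dist_{G^\ell}(u,v)=\lceil \dist_G(u,v)/\ell\rceil ,
\]
with both sides infinite simultaneously. In particular $\dist_{G^\ell}(u,v)\le \dist_G(u,v)$ and $\dist_G(u,v)\le \ell\cdot\dist_{G^\ell}(u,v)$. Throughout, we work with the disjoint union of all graphs in $\mc{G}$, using extended metrics, so vertices in different graphs are at infinite distance. This causes no difficulty.

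\emph{Forward direction.} Suppose $g$ is a $(k-1)$-dimensional control function for $\mc{G}$. Fix $\ell\in\mathbb{Z}_+$ and $G\in\mc{G}$, and take a cover $\U=\bigcup_{i=1}^{k}\U_i$ of $V(G)$ with each $\U_i$ being $\ell$-disjoint and every member of diameter at most $g(\ell)$.
\begin{enumerate}
\item Color each vertex $v$ by some $i$ such that $v$ lies in a member of $\U_i$.
\item Let $H$ be a monochromatic component of color $i$ in $G^\ell$. Any two vertices adjacent in $H$ are at $G$-distance at most $\ell$.
\item Members of $\U_i$ are pairwise at distance greater than $\ell$, and in particular pairwise disjoint. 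So two such adjacent vertices lie in the same member of $\U_i$.
\item Propagating along paths in $H$, all of $V(H)$ lies in a single member $U$.
\item Hence the weak diameter of $H$ in $G^\ell$ is at most its weak diameter in $G$, which is at most $\diam(U)\le g(\ell)$.
\end{enumerate}
So $f(\ell)=\lceil g(\ell)\rceil$ works.

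\emph{Backward direction.} Given $f$, fix $r\in\mathbb{Z}_+$ and $G\in\mc{G}$, set $\ell=r$, and take a $k$-coloring of $G^\ell$ of weak diameter in $G^\ell$ at most $f(\ell)$. Let $\U_i$ be the collection of vertex sets of the monochromatic components of color $i$; together these cover $V(G)$.
\begin{enumerate}
\item Take two distinct members of $\U_i$. If they contained vertices $u,v$ with $\dist_G(u,v)\le \ell$, then $u,v$ would be adjacent in $G^\ell$ and have the same color, so they would lie in the same monochromatic component, a contradiction. Thus $\U_i$ is $r$-disjoint.
\item Each member has weak diameter at most $f(\ell)$ in $G^\ell$, hence $G$-diameter at most $\ell f(\ell)$.
\end{enumerate}
So $r\mapsto r f(r)$, extended to non-integer arguments monotonically, is a $(k-1)$-dimensional control function. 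This function is independent of $G$, so it serves the whole class.

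I expect no real obstacle here. The points needing care are bookkeeping:
\begin{itemize}
\item the two directions use different notions of diameter (weak diameter in $G^\ell$ versus diameter in $G$), which are reconciled by the distance formula above;
\item the definition of control function only quantifies over integer $r$, which matches integer $\ell$;
\item the argument must be uniform over the class.
\end{itemize}
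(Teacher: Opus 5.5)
Your argument is correct. The paper does not prove this statement itself; it imports it from Proposition~1.17 of \cite{BBEGLPS23}. Your direct translation is the standard argument: an $\ell$-disjoint colour class confines each monochromatic component of $G^\ell$ to a single cover member, and conversely the monochromatic components of $G^r$ of one colour form an $r$-disjoint family of $G$-diameter at most $r f(r)$. Two small details are worth making explicit. The strict inequality in $r$-disjointness uses that graph distances are integers, so every cross distance is at least $\ell+1$. A control function of dimension $n<k-1$ is handled by padding the cover with empty families.
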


Now we prove the easier part of \cref{t:main2}, with a slightly weaker assumption.

\begin{lemma} \label{easy_main2}
Let $T$ be a connected topological space.
Let $\mc{F}$ and $\mc{F}'$ be two families of non-empty connected subsets of $T$ such that $\mc{F}'$ is a closed augmentation family for $\mc{F}$. 
If $\bigcup_{S \in \mc{S}} S \neq T$ for every subfamily $\mc{S}$ of $\mc{F}'$ with $|\mc{S}| \leq 2$, then $\iasdim(\mc{F}') \leq \iasdim(\mc{F})$.
\end{lemma}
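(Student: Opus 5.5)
By \cref{finite_iasdim}, it suffices to bound the asymptotic dimension of $I(\mc{S}')$ for finite subfamilies $\mc{S}'$ of $\mc{F}'$. Fix such an $\mc{S}'=\omega(\mc{S})$ with $\mc{S}\subseteq\mc{F}$ finite. The plan is to find a subfamily $\mc{M}\subseteq\mc{S}$ such that $I(\omega(\mc{M}))\cong I(\mc{M})$ and $I(\mc{S}')$ is uniformly coarsely equivalent to $I(\omega(\mc{M}))$. Since $I(\mc{M})$ is an intersection graph of a subfamily of $\mc{F}$, this will transfer colorings from the class for $\mc{F}$ to the class for $\mc{F}'$.

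\textbf{Step 1 (extra edges come only from nesting).} Let $X,Y\in\mc{S}$ be distinct. If $X\cap Y\neq\emptyset$, then $\omega(X)\cap\omega(Y)\neq\emptyset$ by Statement 2 of \cref{o:union}. Conversely, suppose $X\cap Y=\emptyset$ but $\omega(X)\cap\omega(Y)\neq\emptyset$. Outcome (a) of Statement 4 fails by assumption, and outcome (b) is excluded because the union of at most two members of $\mc{F}'$ is not $T$. Hence $\omega(X)\subsetneqq\omega(Y)$ or $\omega(Y)\subsetneqq\omega(X)$.

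\textbf{Step 2 (the maximal members).} Let $\mc{M}\subseteq\mc{S}$ consist of those $X$ for which $\omega(X)$ is not properly contained in any $\omega(Y)$ with $Y\in\mc{S}$. Since $\mc{S}$ is finite, every $Z\in\mc{S}$ has some $P(Z)\in\mc{M}$ with $\omega(Z)\subseteq\omega(P(Z))$; take $P(Z)=Z$ if $Z\in\mc{M}$. For distinct $X,Y\in\mc{M}$, proper containment between $\omega(X)$ and $\omega(Y)$ is impossible by maximality. By Step 1, $\omega(X)\cap\omega(Y)\neq\emptyset$ if and only if $X\cap Y\neq\emptyset$. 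This also covers the case $\omega(X)=\omega(Y)$: then the sets intersect, Step 1 would force a proper containment, so in fact $X\cap Y\neq\emptyset$. Hence $I(\omega(\mc{M}))\cong I(\mc{M})$, where $I(\mc{M})$ is the intersection graph of a finite subfamily of $\mc{F}$.

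\textbf{Step 3 (quasi-isometry).} Let $G=I(\mc{S}')$ and $H=I(\omega(\mc{M}))$, an induced subgraph of $G$. Every vertex $\omega(Z)$ of $G$ is equal or adjacent to $\omega(P(Z))$, since $\omega(Z)\subseteq\omega(P(Z))$ and $\omega(Z)\ne\emptyset$. If $\omega(Z_1)$ and $\omega(Z_2)$ are adjacent in $G$, then $\omega(P(Z_1))$ and $\omega(P(Z_2))$ both contain $\omega(Z_1)\cap\omega(Z_2)\neq\emptyset$, so they are equal or adjacent in $H$. Thus $\pi:\omega(Z)\mapsto\omega(P(Z))$ is a $1$-Lipschitz map $G\to H$ fixing $V(H)$. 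Together with $\dist_G\le\dist_H$ on $V(H)$, this shows $\dist_H(\pi u,\pi v)\le \dist_G(u,v)$ and $\dist_G(u,v)\le \dist_H(\pi u,\pi v)+2$.

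\textbf{Step 4 (transfer colorings).} Let $k-1=\iasdim(\mc{F})$, assumed finite, and let $f$ be the function given by \cref{l:asweak} for the class of intersection graphs of subfamilies of $\mc{F}$. For each $\ell$, take a $k$-coloring $c$ of $H^\ell$ with weak diameter at most $f(\ell)$ in $H^\ell$, and color $G^\ell$ by $c\circ\pi$. Because $\pi$ is $1$-Lipschitz, a monochromatic component of $G^\ell$ maps into a single monochromatic component of $H^\ell$. Two vertices $u,v$ in that component therefore satisfy $\dist_H(\pi u,\pi v)\le \ell f(\ell)$, so $\dist_G(u,v)\le \ell f(\ell)+2$, and their distance in $G^\ell$ is at most $f(\ell)+2$. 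This bound is uniform over all finite $\mc{S}'$, so \cref{l:asweak} gives $\iasdim_{<\infty}(\mc{F}')\le k-1$, and \cref{finite_iasdim} finishes the proof.

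The main obstacle is Step 2, in particular handling ties $\omega(X)=\omega(Y)$, which is where the hypothesis on unions of at most two members is essential. The remaining steps are routine coarse-geometry bookkeeping.
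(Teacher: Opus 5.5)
Your proof is correct and follows essentially the same route as the paper. The paper also passes to the $\subseteq$-maximal members of $\omega(\mc{S})$, uses Statement~4 of \cref{o:union} with the two-set non-covering hypothesis to show that maximal members intersect exactly when their preimages do, and pulls back a $k$-coloring of $H^\ell$ along the map to a maximal superset; your $1$-Lipschitz retraction repackages the paper's Claim~1 and gives the marginally sharper bound $f(\ell)+2$ instead of $\ell f(\ell)+2$.
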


\begin{proof}
By \cref{finite_iasdim}, it suffices to show $\iasdim_{<\infty}(\mc{F}') \leq \iasdim_{<\infty}(\mc{F})$.
We may assume $\iasdim_{<\infty}(\mc{F})<\infty$.
Let $k=\iasdim_{<\infty}(\mc{F})+1$.
By \cref{l:asweak}, there exists a function $f:\mathbb{Z}_+\to\mathbb{Z}_+$ such that for every positive integer $\ell$ and for every finite $\Se \subseteq \F$, the graph $(I(\Se))^{\ell}$ has a $k$-coloring of weak diameter in $(I(\Se))^{\ell}$ at most $f(\ell)$.
To prove this lemma, it suffices to show that for every positive integer $\ell$ and for every finite $\Se \subseteq \F'$, the graph $(I(\Se))^{\ell}$ has a $k$-coloring of weak diameter in $(I(\Se))^{\ell}$ at most $\ell \cdot f(\ell)+2$.

Let $\ell$ be a positive integer, and let $\Se$ be a finite subfamily of $\F'$.
Let $G=I(\Se)$.
It suffices to show that $G^\ell$ has a $k$-coloring of weak diameter in $G^{\ell}$ at most $\ell \cdot f(\ell)+2$.

Let $\M$ be the set of $\subseteq$-maximal elements of $\Se$.
Note that $\M \subseteq \Se \subseteq \F'$.
Let $\omega: \F \rightarrow \F'$ be an augmentation of $\F$.
Let $H$ be the intersection graph of the family $\{\omega^{-1}(X): X \in \M\}$.
Since $\{\omega^{-1}(X): X \in \M\}$ is a finite subfamily of $\F$, there exists a $k$-coloring $c_H$ of $H^\ell$ of weak diameter in $H^\ell$ at most $f(\ell)$.

Since $\M$ is the set of $\subseteq$-maximal elements of $\Se$, there exists a function $\iota: \Se \rightarrow \M$ such that $X \subseteq \iota(X)$ for every $X \in \Se$.
Define $c$ to be the $k$-coloring of $G$ such that $c(X) = c_H(\omega^{-1}(\iota(X)))$ for every $X \in \Se$.
It suffices to show that the weak diameter of $c$ in $G^\ell$ is at most $\ell \cdot f(\ell)+2$.

Suppose to the contrary.
Then there exist a positive integer $t$ and a sequence $X_1,X_2,...,X_t$ of elements of $\Se$ with $c(X_1)=c(X_2)=...=c(X_t)$ such that $\dist_{G^\ell}(X_1,X_t)>\ell \cdot f(\ell)+2$ but $\dist_G(X_j,X_{j+1}) \leq \ell$ for every $j \in [t-1]$.
Hence $c_H(\omega^{-1}(\iota(X_i)))$ is identical for all $i \in [t]$.

\medskip

\noindent{\bf Claim 1:} For every $i \in [t-1]$, $\dist_H(\omega^{-1}(\iota(X_i)),\omega^{-1}(\iota(X_{i+1}))) \leq \ell$.

\noindent{\bf Proof of Claim 1:}
Fix $i \in [t-1]$.
Since $\dist_G(X_i,X_{i+1}) \leq \ell$, there exist $s \in [\ell+1]$ and a path $Y_1Y_2...Y_s$ in $G$ such that $Y_1=X_i$ and $Y_s=X_{i+1}$.
Let $j \in [s-1]$.
Since $Y_j \cap Y_{j+1} \neq \emptyset$, we have $\iota(Y_j) \cap \iota(Y_{j+1}) \supseteq Y_j \cap Y_{j+1} \neq \emptyset$.
By the assumption of this lemma, $\iota(Y_j) \cup \iota(Y_{j+1}) \neq T$ since $\{\iota(Y_j),\iota(Y_{j+1})\}$ is a subfamily of $\F'$.
Moreover, none of $\iota(Y_j)$ and $\iota(Y_{j+1})$ is a proper subset of the other since they are $\subseteq$-maximal elements of $\Se$.
So $\omega^{-1}(\iota(Y_j)) \cap \omega^{-1}(\iota(Y_{j+1})) \neq \emptyset$ by Statement 4 in \cref{o:union}.
Hence $\dist_H(\omega^{-1}(\iota(X_i)),\omega^{-1}(\iota(X_{i+1}))) \leq \ell$.
$\Box$

\medskip

By Claim 1, $\omega^{-1}(\iota(X_1))$ and $\omega^{-1}(\iota(X_t))$ are contained in the same $c_H$-monochromatic component of $H^\ell$.
So $\dist_{H^\ell}(\omega^{-1}(\iota(X_1)),\omega^{-1}(\iota(X_t))) \leq f(\ell)$.
Hence $\dist_{H}(\omega^{-1}(\iota(X_1)), \allowbreak \omega^{-1}(\iota(X_t))) \leq \ell \cdot f(\ell)$.

Note that for any $A,B \in V(H)=\{\omega^{-1}(X): X \in \M\}$, if $A \cap B \neq \emptyset$, then $\omega(A) \cap \omega(B) \supseteq A \cap B \neq \emptyset$.
Hence $$\dist_{G^\ell}(\iota(X_1),\iota(X_t)) \leq \dist_G(\iota(X_1),\iota(X_t)) \leq \dist_{H}(\omega^{-1}(\iota(X_1)),\omega^{-1}(\iota(X_t))) \allowbreak \leq \ell \cdot f(\ell).$$
Since $\iota(X_1) \supseteq X_1$ and $\iota(X_t) \supseteq X_t$, we have $\dist_{G^\ell}(X_1,X_t) \leq \dist_{G^\ell}(\iota(X_1),\iota(X_t))+2 \leq \ell \cdot f(\ell)+2$, a contradiction.
\end{proof}

\section{Augmentation families} \label{sec:augmentation_families}

The goal of this section is to prove the remaining part of \cref{t:main2}.

For a graph $G$, we define the \emph{diameter} of $G$ to be $\sup_{u,v \in V(G)}\dist_G(u,v)$ and denote it by $\diam(G)$.
 
For $r>0$ and a family $\mc{F}$ of sets, let $\bigcup^{r}\mc{F}$ denote the family $$\{\bigcup_{S \in \mc{S}}S: \emptyset \neq \mc{S} \subseteq \mc{F}, |\mc{S}|<\infty, \diam(I(\mc{S})) \leq r\}.$$ 
Note that $\diam(I(\mc{S})) \leq r$ implies that $I(\mc{S})$ is connected. 
For each $X \in \bigcup^{r}\mc{F}$, let $\Phi^{r,\mc{\F}}(X) = \mc{S}$ for some $\mc{S} \subseteq \mc{F}$ such that $X = \bigcup_{S \in \mc{S}}S$ and $\mc{S}$ satisfies the properties above showing that $X \in \bigcup^r \mc{F}$. 
 
\begin{lemma} \label{o:union2} 
Let $T$ be a connected topological space.
If $\mc{F}$ and $\mc{F}'$ are two families of non-empty connected subsets of $T$ such that $\mc{F}'$ is a closed augmentation family for $\mc{F}$, then for every $r>0$, the families $\bigcup^{r}\mc{F}$ and $\bigcup^{r}\mc{F}'$ also consist of some non-empty connected subsets of $T$, and some augmentation family for $\bigcup^{r}\mc{F}$ is a subfamily of $\bigcup^{r}\mc{F}'$.
Moreover, for every $\mc{S} \in \{\mc{F},\mc{F}'\}$, if every element of $\mc{S}$ is closed, then every element of $\bigcup^r\mc{S}$ is closed.
\end{lemma}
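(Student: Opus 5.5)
The plan is to construct the augmentation explicitly from the fixed augmentation $\omega:\F\to\F'$ and then to check each property using \cref{o:union}. Fix $r>0$.

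\emph{Non-empty and connected.} Every $X\in\bigcup^r\F$ has the form $X=\bigcup_{S\in\mc{S}}S$, where $\mc{S}=\Phi^{r,\F}(X)$ is a finite non-empty subfamily of $\F$ with $\diam(I(\mc{S}))\le r$. In particular $I(\mc{S})$ is connected. So $X$ is non-empty, and it is connected by Statement 3 of \cref{o:union}. Here we apply \cref{o:union} with $\omega$ restricted to $\mc{S}$; each $\omega(S)$ is a connected closed augmentation of $S$, as that lemma requires. The same argument applied to $\bigcup^r\F'$, with the identity augmentation (every closed set is an augmentation of itself), gives that the members of $\bigcup^r\F'$ are non-empty and connected. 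The statement about closedness is immediate: a finite union of closed sets is closed.

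\emph{The augmentation map.} For $X\in\bigcup^r\F$ with $\mc{S}=\Phi^{r,\F}(X)$, define
$$\omega^r(X)=\bigcup_{S\in\mc{S}}\omega(S).$$
By Statement 1 of \cref{o:union}, $\omega^r(X)$ is an augmentation of $X$.

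We must show that $\omega^r(X)\in\bigcup^r\F'$, i.e.\ that the family $\omega(\mc{S})=\{\omega(S):S\in\mc{S}\}$ (taken as a multiset, with $\omega$ a bijection) has intersection graph of diameter at most $r$. This follows from Statement 2 of \cref{o:union}: if $S\cap S'\neq\emptyset$ then $\omega(S)\cap\omega(S')\neq\emptyset$. Hence $I(\omega(\mc{S}))$ contains a copy of $I(\mc{S})$ as a spanning subgraph under the bijection $\omega$, and distances can only decrease. Finiteness and non-emptiness are preserved.

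\emph{Main obstacle: multiplicities.} The main subtlety is that an augmentation family must be the image of a bijection, while families are multisets. We therefore take the augmentation family to be the multiset $\{\omega^r(X):X\in\bigcup^r\F\}$, indexed by the members of $\bigcup^r\F$. We must also make precise that it is a "subfamily" of $\bigcup^r\F'$ even when several $X$ give the same set.

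One option is to read $\bigcup^r\F'$ as a multiset indexed by the pairs $(\mc{S},\text{union})$. Then distinct $X$ (distinct indices) can be sent to distinct indices $\omega(\Phi^{r,\F}(X))$. This works because $\Phi^{r,\F}$ is injective on the index set: distinct members $X$ come with chosen subfamilies, and $\omega$ is a bijection.

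I expect this bookkeeping to be the only delicate point. I would handle it by stating explicitly that members of $\bigcup^r\F$ are indexed by the subfamilies $\mc{S}$, and that $\omega^r$ maps the member indexed by $\mc{S}$ to the member of $\bigcup^r\F'$ indexed by $\omega(\mc{S})$. This map is injective, so its image is a subfamily of $\bigcup^r\F'$ and is an augmentation family for $\bigcup^r\F$.
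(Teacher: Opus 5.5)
Your proof is correct and follows essentially the same route as the paper: the augmentation of $X$ is $\bigcup_{S\in\Phi^{r,\F}(X)}\omega(S)$, Statement 1 of \cref{o:union} makes it an augmentation of $X$, Statement 2 shows it lies in $\bigcup^r\F'$ (intersection-graph distances cannot increase), Statement 3 gives connectivity, and closedness holds because a finite union of closed sets is closed. Your extra bookkeeping on multiplicities (indexing members by their chosen subfamilies) and the identity-augmentation argument for connectivity of the members of $\bigcup^r\F'$ are details the paper leaves implicit, and you handle both correctly.
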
	
 
\begin{proof} 
Let $\omega: \mc{F} \to \mc{F}'$ be an augmentation of $\mc{F}$. 
Clearly the elements of $\bigcup^{r}\mc{F}$ and $\bigcup^{r}\mc{F}'$ are non-empty, and they are connected by Statement 3 of \cref{o:union}. 
For every $X \in \bigcup^r \mc{F}$, let $\omega'(X)= \bigcup_{S \in \Phi^{r,\mc{\F}}(X)}\omega(S)$.
Then $\omega'(X)$ is an augmentation of $X$ for every $X \in \bigcup^r \mc{F}$ by Statement 1 of \cref{o:union}.
So $\{\omega'(X): X \in \bigcup^r \mc{F}\}$ is an augmentation family for $\bigcup^r \mc{F}$.
In addition, $\omega'(X) \in \bigcup^{r}\mc{F}'$ for every $X \in \bigcup^r \F$ since $\diam(I(\{\omega(S): S \in \Phi^{r,\mc{\F}}(X)\})) \leq \diam(I(\Phi^{r,\mc{\F}}(X))) \leq r$.
Hence $\{\omega'(X): X \in \bigcup^r \mc{F}\}$ is a subfamily of $\bigcup^{r}\mc{F}'$.

Moreover, for every $\mc{S} \in \{\mc{F},\mc{F}'\}$, if every element of $\mc{S}$ is closed, then every element of $\bigcup^r\mc{S}$ is a union of finitely many closed sets and hence is closed.
\end{proof}

The main ingredient of the proof of \cref{t:main2} is the following lemma, which will be proved in \cref{sec:pf_main3}.

\begin{restatable}{lemma}{MainLemma}\label{t:main3}  
Let $T$ be a connected topological space, and let $\mc{S}$ and $\mc{S}'$ be two finite families of non-empty connected subsets of $T$ such that $\mc{S}'$ is a closed augmentation family for $\mc{S}$ and $\bigcup_{S \in \mc{S}'} S \neq T$. 
Let $k \geq 2$ be an integer.
If for every $\mc{S}'' \subseteq \mc{S}'$, there exists a $k$-coloring of $I(\mc{S}'')$ of weak diameter in $I(\mc{S}'')$ at most $r$, then there exists a $k$-coloring of $I(\mc{S})$ of weak diameter in $I(\mc{S})$ at most $2r+32$.
\end{restatable}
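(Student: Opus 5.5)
Throughout, $\omega:\Se\to\Se'$ is the augmentation, and the problem is that $I(\Se)$ can be much sparser than $I(\Se')$. Two members of $\Se$ can be far apart in $I(\Se)$ while their augmentations intersect. By Statement 4 of \cref{o:union}, and since $\bigcup\Se'\neq T$ rules out outcome (b), this happens only through nesting: if $X\cap Y=\emptyset$ but $\omega(X)\cap\omega(Y)\neq\emptyset$, then one augmentation lies strictly inside the other, and the inner set is disjoint from the outer member of $\Se$. So the sets form a laminar-like structure, and the plan is to color level by level along the containment order of the augmentations.

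First I would take $\M'$, the $\subseteq$-maximal elements of $\Se'$, and group them into components of $I(\M')$. For a component $\mc{C}$, the union $U_\mc{C}=\bigcup\mc{C}$ is closed and connected. By Statement 1 of \cref{o:union} it is an augmentation of the union of the corresponding members of $\Se$. Distinct components have disjoint unions. Every $\omega(X)$ with $X\in\Se$ lies in some $U_\mc{C}$, and its preimage $X$ either meets the outer sets or sits in $\mathrm{int}$ of some maximal augmentation. Define the \emph{depth} of $X\in\Se$ recursively: $X$ is at depth 0 if $\omega(X)\in\M'$; otherwise $X$ lies inside $\mathrm{int}\,\omega(Y)$ for some $Y$ not meeting $X$, and we recurse within that region. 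The key structural claim is that for a component $K$ of $I(\Se)$, the members of $K$ that meet some depth-0 set form the "outer layer" of $K$. The rest of $K$ is connected to this layer only through sets that are themselves contained in the augmentations. Hence $K$ decomposes into pieces, and each piece is separated from the rest by a bounded-radius "collar" in $I(\Se)$.

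Second, I would color using the hypothesis applied to suitable $\Se''\subseteq\Se'$. Take $\Se''$ to consist of the augmentations at one level, for instance the maximal ones together with the sets that are "visible" from them. Let $c''$ be a $k$-coloring of $I(\Se'')$ of weak diameter at most $r$. I would pull this back to $\Se$ by giving each $X$ the color of the maximal augmentation containing it, as in the proof of \cref{easy_main2}, but inverting the direction. Then one must show that a monochromatic path in $I(\Se)$ projects to a monochromatic walk in $I(\Se'')$, and that distance in $I(\Se'')$ bounds distance in $I(\Se)$ up to a factor near $2$ plus a constant. Nested inner regions are handled by recursion, since they are disjoint from the outer members. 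There the hypothesis $k\ge2$ lets me alternate or shift colors between a region and its boundary collar, so that monochromatic components cannot pass from one depth to the next. This is where the additive constant like $32$ would come from, via detours of bounded length around boundary sets.

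The main obstacle is the comparison of metrics. I need that if $\omega(X)\cap\omega(Y)\ne\emptyset$ for two sets at the same level, then $X$ and $Y$ are within bounded distance in $I(\Se)$, or at least within twice the relevant distance. Connectivity is what should give this: $\partial\omega(X)\subseteq X$, so a connected set leaving $\omega(X)$ must cross $X$. I would try to prove that any two same-level sets with intersecting augmentations are joined by a short path through boundary-touching sets, using Statement 4 repeatedly. I expect the bookkeeping of the nested levels to be delicate, in particular choosing $\Se''$ so that the pulled-back coloring has small weak diameter.
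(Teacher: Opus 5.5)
You have the right picture: by Statement 4 of \cref{o:union}, with outcome (b) excluded, disjoint members whose augmentations meet are nested. Your outline also has the right ingredients: apply the hypothesis to maximal augmentations, recurse inside them, and spend a second color on a separating layer. But the concrete steps you commit to would not work as stated.

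\textbf{Concrete problems.}
\begin{enumerate}
\item The obstacle you single out is not one. For members whose augmentations are pairwise not properly nested, Statement 4 gives $X\cap Y\neq\emptyset$ iff $\omega(X)\cap\omega(Y)\neq\emptyset$. So for the set $\mathcal{U}$ of members with $\subseteq$-maximal augmentations, $I(\mathcal{U})\cong I(\omega(\mathcal{U}))$ exactly, with no factor-$2$ distortion.
\item Giving each $X$ the color of the maximal augmentation containing it fails. Members nested inside $\omega(U)$ can form arbitrarily long chains in $I(\mathcal{S})$. Only members within bounded $I(\mathcal{S})$-distance of $U$ may inherit its color.
\item Your depth recursion is not well defined. $\omega(X)$ can be non-maximal while $X$ meets every $Y$ with $\omega(X)\subseteq\omega(Y)$ (for example, a circle internally tangent to a larger one). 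Such an $X$ lies in no $\mathrm{int}\,\omega(Y)$ with $Y\cap X=\emptyset$.
\end{enumerate}

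\textbf{The missing idea} is how to recurse without the bound growing with the nesting depth. When you descend into the region of $U$, part of it is already colored (members near $U$, and neighbors from outside). So the subproblem is a precoloring-extension problem that must be solved with the \emph{same} bound. You also need a proved statement that the region touches the rest of $I(\mathcal{S})$ only near $U$. The paper supplies both, as follows.
\begin{enumerate}
\item It proves, by induction on $|\mathcal{S}-\mathcal{Z}|$, that every coloring of a (possibly empty) set $\mathcal{Z}$ lying within distance $5$ of a \emph{dominator} $D$ extends with weak diameter at most $2r+32$. Here a dominator is a member $D$ such that every member meets $D$ or has its augmentation inside $\omega(D)$.
\item It assigns each $X\in\mathcal{S}-\mathcal{Z}$ to the $\preceq$-first $U\in\mathcal{U}$ with $\omega(X)\subseteq\omega(U)$, forming classes $\mathcal{S}_U$.
\item It shows that every member of $N_G[\mathcal{S}_U]-\mathcal{S}_U$ is within distance $3$ of $U$ in $G_U=I(N_G[\mathcal{S}_U])$. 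This uses a path $XYVU$ with $V\prec U$, plus a dominator argument ruling out $X\in\mathcal{Z}$. Consequently $U$ is a dominator for $N_G[\mathcal{S}_U]$.
\item Let $c_1$ be the hypothesis coloring of $I(\mathcal{U})\cong I(\omega(\mathcal{U}))$. Members of $\mathcal{S}_U$ at $G_U$-distance at most $4$ from $U$ get $c_1(U)$, those at distance exactly $5$ get another color, and induction colors the rest of each $G_U$.
\end{enumerate}

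A monochromatic component containing a member of some $\mathcal{S}_U$ at distance $\geq 5$ from $U$ is then trapped in that $\mathcal{S}_U$ and bounded by induction. Otherwise, deleting $\mathcal{Z}$ (weak diameter at most $10$) leaves a piece of weak diameter greater than $r+10$. Every member of that piece lies within distance $4$ of its $U$. Those $U$'s form a $c_1$-monochromatic connected set in $I(\mathcal{U})$ containing two members at distance greater than $r$, a contradiction. That gluing argument, not metric distortion, is where $2r+32$ comes from.

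Without the precolored inductive statement and the distance-$3$ collar lemma, your sketch gives no bound independent of depth.
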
	

Now we show that \cref{t:main3} implies \cref{t:main2}.

\begin{proof}[Proof of \cref{t:main2} assuming \cref{t:main3}]
By \cref{easy_main2}, it suffices to show $\iasdim(\F) \allowbreak \leq \max\{\iasdim(\mc{F}'),1\}$.
It suffices to show $\iasdim_{<\infty}(\F) \leq \max\{\iasdim_{<\infty}(\mc{F}'),1\}$ by \cref{finite_iasdim}.
We may assume $\iasdim_{<\infty}(\mc{F}')<\infty$, for otherwise we are done.

Let $k=\max\{\iasdim_{<\infty}(\mc{F}'),1\}+1$. 
By \cref{l:asweak}, there exists $f:\mathbb{Z}_+\to\mathbb{Z}_+$ such that 
	\begin{itemize}
		\item[(i)] for every positive integer $\ell$ and for every finite $\mc{S}'' \subseteq \mc{F}'$, there exists a $k$-coloring of $(I(\mc{S}''))^\ell$ with weak diameter in $(I(\mc{S}''))^\ell$ at most $f(\ell)$. 
	\end{itemize}
Define $g:\mathbb{Z}_+\to\mathbb{Z}_+$ to be the function such that $g(\ell) = 6(4\ell+1) \cdot f(4\ell+1)+104$ for every positive integer $\ell$. 

To show \cref{t:main2}, \cref{l:asweak} implies that it suffices to show that for any finite subfamily $\Se_0$ of $\mc{F}$ and positive integer $\ell$, the graph $(I(\mc{S}_0))^{\ell}$ has a $k$-coloring of weak diameter in $(I(\mc{S}_0))^{\ell}$ at most $g(\ell)$.

Let $\Se_0$ be a finite subfamily of $\mc{F}$, and let $\ell \in \mathbb{Z}_+$.
We shall show that the graph $(I(\mc{S}_0))^{\ell}$ has a $k$-coloring of weak diameter in $(I(\mc{S}_0))^{\ell}$ at most $g(\ell)$.

Let $\omega: \mc{F} \rightarrow \mc{F'}$ be an augmentation of $\mc{F}$.
Let $\omega(\Se_0) = \{\omega(S): S \in \mc{S}_0\}$.
Then $\omega(\Se_0)$ is a closed augmentation family for $\Se_0$.
Let $\mc{S} = \bigcup^{2\ell}\mc{S}_0$ and $\mc{S'} = \bigcup^{2\ell}\omega(\mc{S}_0)$. 
By \cref{o:union2}, $\Se$ and $\Se'$ consist of some non-empty connected subsets of $T$, and there exists an augmentation family $\Se''$ for $\mc{S}$ such that $\Se'' \subseteq \Se'$.
Since every member of $\F'$ is closed, every element of $\omega(\Se_0)$ is closed.
So \cref{o:union2} implies that every member of $\Se'$ is closed.
Hence every member of $\Se''$ is closed.
That is,
	\begin{itemize}
		\item[(ii)] $\Se''$ is a closed augmentation family for $\Se$.
	\end{itemize}

We will apply \cref{t:main3} to $\mc{S}$ and $\mc{S}''$ to show that $I(\mc{S})$ has a $k$-coloring with bounded weak diameter. 
First, we need to establish the necessary conditions.

\medskip

\noindent{\bf Claim 1:} For every $\M \subseteq \Se'$, there exists a $k$-coloring of $I(\M)$ of weak diameter in $I(\M)$ at most $(4\ell+1) \cdot f(4\ell+1)+1$.

\noindent{\bf Proof of Claim 1:}
Let $\M \subseteq \Se'$, and let $H = I(\M)$.
Let $\M_*= \bigcup_{X \in \M} \Phi^{2\ell,\omega(\Se_0)}(X)$, and let $H_*=I(\M_*)$.

Since $\Se_0$ is finite, $\Se'$ is finite and hence $\M$ is finite.
So $\M_*$ is a finite subfamily of $\omega(\Se_0) \subseteq \mc{F}'$.
Hence (i) implies that $(H_*)^{4\ell+1}$ has a $k$-coloring $c_*$ of weak diameter in $(H_*)^{4\ell+1}$ at most $f(4\ell+1)$. 

For each $X \in \M$, let $\phi(X)$ be a member of $\Phi^{2\ell,\omega(\Se_0)}(X) \subseteq V(H_*)$, and let $c_H(X)=c_*(\phi(X))$. 
Note that if two members $X,Y$ of $\M$ are adjacent in $H$, then $\phi(X)$ and $\phi(Y)$ are equal or adjacent in $(H_*)^{4\ell+1}$.
On the other hand, for any $X,Y \in \M=V(H)$, $$\dist_{H}(X,Y) \leq \dist_{H_*}(\phi(X), \phi(Y)) + 1 \leq (4\ell+1) \cdot \dist_{(H_*)^{4\ell+1}}(\phi(X), \phi(Y))+1.$$  
Thus for any $t \in {\mathbb Z}_+$, $\phi$ maps the vertex set of each $c_H$-monochromatic component of $H$ with weak diameter in $H$ equal to $t$ to a subset of the vertex set of a $c_*$-monochromatic component of $(H_*)^{4\ell+1}$ with weak diameter in $(H_*)^{4\ell+1}$ at least $(t-1)/(4\ell+1)$. 
It follows that the weak diameter in $H$ of $c_H$ is at most $(4\ell+1) \cdot f(4\ell+1)+1$.
$\Box$

\medskip

Since $\Se'$ is finite and every member of $\Se'$ is a union of finitely many members of $\F'$, we know that $\bigcup_{S \in \Se'}S$ is also a union of finitely many members of $\F'$.
Hence $\bigcup_{S \in \Se''}S \subseteq \bigcup_{S \in \Se'}S$ is a union of finitely many members of $\F'$.
So 
	\begin{itemize}
		\item[(iii)] $\bigcup_{S \in \Se''}S \neq T$ 
	\end{itemize}
by assumption of this theorem.

Let $R=2 \cdot ((4\ell+1) \cdot f(4\ell+1)+1)+32$.
By Claim 1, for every $\M \subseteq \Se'' \subseteq \Se'$, there exists a $k$-coloring of $I(\M)$ of weak diameter in $I(\M)$ at most $(4\ell+1) \cdot f(4\ell+1)+1$.
By (ii) and (iii), we can apply \cref{t:main3} (taking $(\Se,\Se')=(\Se,\Se'')$).
So there exists a $k$-coloring $c$ of $I(\Se)$ of weak diameter in $I(\Se)$ at most $2 \cdot ((4\ell+1) \cdot f(4\ell+1)+1)+32=R$.

Let $G_0=I(\mc{S}_0)$.
For each $X \in \mc{S}_0$, let $\psi(X)$ be the union of all $Y \in \mc{S}_0$ such that $\dist_{G_0}(X,Y) \leq \ell$, so $\psi(X) \in \mc{S}$ and we define $c_0(X)=c(\psi(X))$. 
Note that if $X,Y \in V(G_0)=\Se_0$ are adjacent in $(G_0)^{\ell}$, then $\psi(X)$ and $\psi(Y)$ are equal or adjacent in $I(\Se)$. 
On the other hand, for any $X,Y \in V(G_0)$, $$\dist_{(G_0)^{\ell}}(X,Y) \leq 3 \cdot \dist_{I(\Se)}(\psi(X),\psi(Y))+2.$$ 
It follows that for any $t \in \mathbb{Z}_+$, $\psi$ maps the vertex set of every $c_0$-monochromatic component of $(G_0)^{\ell}$ with weak diameter in $(G_0)^{\ell}$ equal to $t$ to a subset of the vertex set of a $c$-monochromatic component of $I(\Se)$ with weak diameter in $I(\Se)$ at least $(t-2)/3$. 
Thus the weak diameter of the coloring $c_0$ of $(G_0)^{\ell}$ is at most $3R+2=6(4\ell+1) \cdot f(4\ell+1)+104= g(\ell)$, as desired.
\end{proof}	
 	  
\section{Proof of \cref{t:main3}} \label{sec:pf_main3}

We prove \cref{t:main3} in this section.
We restate it for convenience of the reader.

\MainLemma*

\begin{proof}
Let $T,\mc{S},\mc{S}'$ be as stated in the lemma.
Let  $\omega: \mc{S} \to \mc{S}'$ be an augmentation of $\mc{S}$.
For every subset $\mc{S}''$ of $\mc{S}$, a \emph{dominator for $\mc{S}''$} is an element $D$ of $\mc{S}''$ such that for every $X \in \mc{S}''$, either $D \cap X \neq \emptyset$ or $\omega(X) \subseteq \omega(D)$. 
Let $G=I(\mc{S})$. 
Let $R=2r+32$.
 	
Let $c_0$ be a $k$-coloring of a subset $\mc{Z}$ of $\mc{S}$ such that either $\mc{Z} = \emptyset$, or there exists a dominator $D \in \mc{S}$ such that $\dist_G(X,D) \leq 5$ for every $X \in \mc{Z}$. 

We will show by induction on $|\mc{S}-\mc{Z}|$ that $c_0$ extends to a $k$-coloring of $G$ of weak diameter in $G$ at most $R$.
Note that it would imply this lemma because this lemma is the special case $\mc{Z}=\emptyset$.

The base case is trivial since if $\mc{S}=\mc{Z}$, then the diameter of $G$ is at most $10$.
So we may assume $\mc{Z} \subset \mc{S}$.

By extending $c_0$, we may without loss of generality assume that 
	\begin{itemize}
		\item[(i)] if $\mc{Z} \neq \emptyset$, then $\mc{Z}=\{X \in \mc{S} : \dist_G(X,D) \leq 5\}$ for some dominator $D$ for $\mc{S}$.
	\end{itemize}

For every $\F \subseteq \Se$, let $\omega(\F) = \{\omega(X): X \in \F\}$.
Let $\mc{U}$ be a subset of $\mc{S} - \mc{Z}$ such that $\omega(\mc{U})$ is the set of $\subseteq$-maximal elements of $\omega(\mc{S} - \mc{Z})$. 

\medskip

\noindent{\bf Claim 1:} There exists a $k$-coloring $c_1$ of $I(\mc{U})$ with weak diameter in $I(\mc{U})$ at most $r$.
 	
\noindent{\bf Proof of Claim 1:} 
By Statement 2 in \cref{o:union}, if $X,Y \in \mc{U}$ with $XY \in E(I(\mc{U}))$, then $\omega(X)\omega(Y) \in E(I(\omega(\mc{U})))$.
For any distinct $X,Y \in \mc{U}$ with $XY \not \in E(I(\mc{U}))$, since $\omega(X) \cup \omega(Y) \neq T$ by the assumption of this lemma and none of $\omega(X)$ and $\omega(Y)$ is a proper subset of the other by the definition of $\mc{U}$, Statement 4 in \cref{o:union} implies that $\omega(X)\omega(Y) \not \in E(I(\omega(\mc{U})))$.
Hence the graph $I(\mc{U})$ is isomorphic to $I(\omega(\mc{U}))$. 
Then the claim follows from the assumption of this lemma for $\mc{S}'' = \omega(\mc{U})$. 
$\Box$ 

\medskip
 	
Let $\preceq$ be a linear ordering of $\mc{U}$.
By the definition of $\mc{U}$, for every $X \in \mc{S} - \mc{Z}$, $\omega(X)$ is a subset of $\omega(U)$ for some element $U \in \mc{U}$.
So there exists a partition $ \{\mc{S}_U: U \in \mc{U}\}$ of  $\mc{S} - \mc{Z}$ such that $U$ is the $\preceq$-smallest element satisfying $\omega(X) \subseteq \omega(U)$ for every $X \in \mc{S}_U$. In particular, $U \in \mc{S}_U$ for every $U \in \mc{U}$.
 	
For $U \in \mc{U}$, let $\mc{F}_U = N_G[\mc{S}_U]$ and let $G_U = I(\mc{F}_U)$.

\medskip

\noindent{\bf Claim 2:} For any $U \in \mc{U}$ and $X \in \mc{F}_U - \mc{S}_U$ with $X \cap U = \emptyset$, we have $\omega(X) \subsetneqq \omega(U)$ and $\dist_{G_U}(X, U) \leq 3$. 

\noindent{\bf Proof of Claim 2:}
Since $X \in \F_U-\Se_U$, there exists $Y \in \mc{S}_U$ such that $X \cap Y \neq \emptyset$. 
As $Y \subseteq \omega(Y) \subseteq \omega(U)$ it follows that $X \cap \omega (U) \supseteq X \cap Y \neq \emptyset$. 
So $\omega(X)  \subsetneqq \omega(U)$ and $U \cap \omega (X)=\emptyset$ by Statement 4 in \cref{o:union}. 

We first suppose $X \in \mc{Z}$. 
By (i), since $X \in \mc{Z}$, there exists a dominator $D$ for $\mc{S}$ with $\dist_G(X,D) \leq 5$.
Choose $X' \in \mc{S}$ such that $\omega(X') \subseteq \omega(U)$ and $\dist_G(X',D)$ is minimum. 
Note that $X$ is a candidate for $X'$, so $X'$ exists and $\dist_G(X',D) \leq \dist_G(X,D) \leq 5$.
Since $U \in \mc{U}$, we know $U \not \in \mc{Z}$, so (i) implies that $U \cap D = \emptyset$.
Since $D$ is a dominator for $\Se$ and $U \cap D = \emptyset$, we have $\omega(U) \subseteq \omega(D)$.
If $X'=D$, then $\omega(D)=\omega(X') \subseteq \omega(U) \subseteq \omega(D)$, implying $\omega(U)=\omega(D)$, and hence $U \cap D \supseteq \bd\omega(U) \cap \bd\omega(D)=\bd\omega(D)$; but $\partial\omega(D) \neq \emptyset$ since $\omega(D)$ is a closed subset of a connected topological space $T$ and $\emptyset \neq \omega(D) \neq T$, a contradiction. 
Thus $X' \neq D$.
Since $1 \leq \dist_G(X',D) \leq 5$ is finite, there exists $X'' \in \Se$ such that $X' \cap X'' \neq \emptyset$ and $\dist_G(X'', D) = \dist_G(X',D)-1\leq 4$. 
Note that $X'' \in \mc{Z}$ by (i), and we know $\omega(X'') \not \subseteq \omega(U)$ by the choice of $X'$. 
Moreover, $\omega(X'') \cap \omega(U) \supseteq X'' \cap X' \neq \emptyset$. 
If $X'' \cap U \neq \emptyset$, then $\dist_G(U,D) \leq \dist_G(X'',D)+1 \leq 5$, contradicting $U \not \in \mc{Z}$. 
So $X'' \cap U = \emptyset$.
Hence $X'' \cap \omega(U)=\emptyset$ by Statement 4 in \cref{o:union}, again a contradiction, as $X'' \cap \omega(U) \supseteq X'' \cap X' \neq \emptyset$.

Therefore, $X \in \Se-\mc{Z}$.
This together with $X \not \in \Se_U$ imply that there exists $V \in \mc{U}$ with $V \precneqq U$ such that $\omega(X) \subseteq \omega(V)$. 
Since $\omega(X) \subseteq \omega(U)$, it follows that $\omega(V) \cap \omega(U) \neq \emptyset$.
Since $U$ and $V$ are distinct elements of $\mc{U}$, Statement 4 in \cref{o:union} implies $U \cap V \neq\emptyset$. 
In particular, $V \in \mc{F}_U$. 
Further, $\omega(Y) \cap \omega(V) \supseteq \omega(Y) \cap \omega(X) \supseteq Y \cap X \neq \emptyset$. 
As $Y \in \mc{S}_U$ and $V \precneqq U$, we have $\omega(Y) \not \subseteq \omega(V)$. 
Then Statement 4 in \cref{o:union} implies $Y \cap V \neq \emptyset$.
So $XYVU$ is a path of length at most three from $X$ to $U$ in $G_U$. 
Hence $\dist_{G_U}(X, U) \leq 3$.
$\Box$ 

\medskip

Let $c_2$ be the partial $k$-coloring of $G$  defined as follows
\begin{itemize}
	\item $c_2(X)=c_0(X)$ for every $X \in \mc{Z}$.
	\item $c_2(X)=c_1(U)$ for any $U \in \mc{U}$ and $X \in \mc{S}_U$ such that $\dist_{G_U}(X,U) \leq 4$.
	\item $c_2(X) = \min([k] - \{c_1(U)\})$ for any $U \in \mc{U}$ and $X \in \mc{S}_U$ such that $\dist_{G_U}(X,U) = 5$.
\end{itemize}
For every $U \in \mc{U}$, let $\mc{Z}_U = \{X \in \mc{F}_U: c_2(X)$ is defined$\}$.

By Claim 2, 
	\begin{itemize}
		\item[(ii)] for every $U \in \mc{U}$, we know that $U$ is a dominator for $\mc{F}_U$, and $\dist_{G_U}(X,U) \leq 3$ for every $X \in \mc{Z}_U - \Se_U$. 
	\end{itemize}
For every $U \in \mc{U}$, the definition of $\mc{Z}_U$ and (ii) imply that $U$ is a dominator for $\mc{F}_U$ such that $\dist_{G_U}(X,U) \leq 5$ for every $X \in \mc{Z}_U$. 
Moreover, for every $U \in \mc{U}$, we have $|\mc{F}_U - \mc{Z}_U|<|\mc{S}-\mc{Z}|$ since $U \in \mc{Z}_U-\mc{Z}$ and $\mc{Z} \cap \mc{F}_U \subseteq \mc{Z}_U$.
Hence for each $U \in \mc{U}$, we can apply the inductive hypothesis to $G_U$ to extend the coloring $c_2|_{\mc{Z}_U}$ to a coloring $c_U$ of $G_U$ with weak diameter in $G_U$ at most $R$. 

Define $c$ to be the coloring of $G$ such that for every $X \in V(G)=\Se$,
	\begin{itemize}
		\item if $X \in \mc{Z}$, then $c(X)=c_2(X)$, and
		\item if $X \in \Se-\mc{Z}$, then $c(X)=c_{U_X}(X)$, where $U_X$ is the unique member of $\mc{U}$ such that $X \in \Se_{U_X}$.
	\end{itemize}
To prove this lemma, it suffices to show that $c$ has weak diameter in $G$ at most $R$. 

Suppose to the contrary, and let $\mc{Q}$ be the vertex set of a connected subgraph of $G$ with all vertices receiving the same color (say $1$) of weak diameter in $G$ larger than $R$. 

\medskip

\noindent{\bf Claim 3:} There exists $X \in \mc{Q}$ such that $X \in \mc{S}_U$ for some $U \in \mc{U}$ with $\dist_{G_U}(X, U) \geq 5$.

\noindent{\bf Proof of Claim 3:}
Since either $\mc{Z}=\emptyset$, or $\mc{Z}$ has weak diameter in $G$ at most $10$, we know that there exists $\mc{Q}' \subseteq \mc{Q} - \mc{Z}$ such that $\mc{Q}'$ is the vertex set of a connected subgraph of $G \setminus \mc{Z}$ of weak diameter in $G$ larger than $(R-12)/2 = r+10$.
Let $R'=r+10$.
Let $X_1,\ldots,X_{\ell} \in \mc{Q}'$ be the vertices of a path in $G[\mc{Q}]$ in order such that $\dist_G(X_1,X_{\ell}) \geq R'$.
For every $i \in [\ell]$, since $X_i \in \mc{Q}' \subseteq \Se-\mc{Z}$, there exists $U_i \in \U$ such that $X_i \in \Se_{U_i}$.
Since $X_i \cap X_{i+1} \neq \emptyset$ and $\omega(X_i) \subseteq \omega(U_i)$ and $\omega(X_{i+1}) \subseteq \omega(U_{i+1})$  for every $1 \leq i \leq \ell-1$, we have $\omega(U_i) \cap \omega(U_{i+1}) \neq \emptyset$, so $U_i \cap U_{i+1} \neq \emptyset$ by Statement 4 in \cref{o:union}. 

Suppose that this claim does not hold.
Then for every $i \in [\ell]$, we have $\dist_{G_{U_i}}(X_i,U_i) \leq 4$, so $c_1(U_i)=c(U_i)=c(X_i)=1$ since $c$ extends $c_2$.
Hence $U_1,\ldots,U_{\ell}$ are contained in a $c_1$-monochromatic component of $I(\mc{U})$ such that 
	\begin{align*}
		\dist_{I(\mc{U})}(U_1,U_\ell) \geq \dist_G(U_1,U_\ell) & \geq \dist_G(X_1,X_\ell)- \dist_G(U_1,X_1) - \dist_G(X_\ell,U_\ell) \\
		& \geq \dist_G(X_1,X_\ell)- \dist_{G_{U_1}}(U_1,X_1) - \dist_{G_{U_\ell}}(X_\ell,U_\ell) \\
		& \geq R'-8>r.
	\end{align*}
It contradicts Claim 1. 
$\Box$ 

\medskip

Let $X$ and $U$ be as in Claim 3.
Let $C$ be the component of $G[\mc{Q} \cap \mc{S}_U]$ containing $X$.
If $V(C)=\mc{Q}$, then $\mc{Q}$ is contained in a $c_U$-monochromatic component of $G_U$ and hence has weak diameter in $G_U$ (and hence in $G$) at most $R$, a contradiction.
So $V(C) \neq \mc{Q}$. 
In particular, there exists $Y \in (\mc{Q}-\mc{S}_U) \cap N_G(V(C)) \subseteq N_G[\Se_U] = V(G_U)$ such that $\dist_{G_U}(X,Y)<\infty$.
We choose $Y$ such that $\dist_{G_U}(X,Y)$ is minimum.
Then $\dist_{G_U}(Y,U) \leq 3$ by Claim 2. 

Hence $Y \in \{W \in \mc{Q} \cap N_G(V(C)): \dist_{G_U}(U,W) \leq 3\}$ and the set $X \in \{W \in \mc{Q} \cap V(C): \dist_{G_U}(U,W) \geq 5\}$.
Let $P$ be a path in $G[N_G[V(C)]]$ from $X$ to $Y$ with $V(P)-\{Y\} \subseteq V(C)$.
For every integer $i \geq 0$, let $A_i = \{W \in V(G_U): \dist_{G_U}(U,W)=i\}$.
Then $\{A_i: i \geq 0\}$ is a partition of $V(G_U)$ such that for every $e \in E(G_U)$, there exists $i_e \geq 0$ such that both ends of $e$ are contained in $A_{i_e} \cup A_{i_e+1}$. 
Since $P$ is a path in $G_U$ from $X \in \bigcup_{i \geq 5}A_i$ to $Y \in \bigcup_{0 \leq i \leq 3}A_i$, there exist $X_4,X_5 \in V(P)-\{Y\} \subseteq V(C) \subseteq \mc{Q} \cap  \mc{S}_U$ such that $\dist_{G_U}(X_4,U)=4$ and $\dist_{G_U}(X_5,U)=5$. 
But then $c_2(X_4) \neq c_2(X_5)$ by the definition of $c_2$.
So $c(X_4) \neq c(X_5)$, as $c$ extends $c_2$, a contradiction.
\end{proof}

\section{Tree-decompositions of graphs and metric spaces}\label{s:treedec}

Our next objective is to prove \cref{asdim_space_filling_intro}. 
In this section we develop necessary machinery related to tree-decompositions and prove a key lemma.

A \emph{tree-decomposition} of a graph $G$ is a pair $(T, \{B_t: t \in V(T)\})$ such that 
	\begin{itemize} 
		\item $B_t \subseteq V(G)$ for every $t\in V(T)$,
		\item for every $v \in V(G)$, the set $\{ t: v \in B_t \}$ induces a (non-null) subtree of $T$, and
		\item for every $uv \in E(G)$, there exists $t \in V(T)$ such that $u,v \in B_t$.
	\end{itemize}
The \emph{width} of a tree decomposition $(T, \{B_t: t \in V(T)\})$ is $\max_{t \in V(T)}|B_t| - 1 $. 
The \emph{tree-width $\tw(G)$} of a graph $G$ is the minimum width of a tree-decomposition of $G$. 

Let $(X,d)$ be a metric space. 
By analogy with the above, for every $r>0$, we define a \emph{$r$-compliant tree-decomposition} of $(X,d)$ as a pair $(T, \{B_t: t \in V(T)\})$ such that
	\begin{itemize} 
		\item $B_t \subseteq X$ for every $t\in V(T)$, 
		\item for every $x \in X$, the set $\{t: x \in B_t \}$ induces a (non-null) subtree of $T$, and
		\item for any $x,y \in X$ with $d(x,y) \leq r$, there exists $t \in V(T)$ such that $x,y \in B_t$.
	\end{itemize}

Given a finite metric space $(X,d)$ and $r \geq 0$, we naturally associate a graph $G_r(X,d)$ to $X$; formally, we define $G_r(X,d)$ to be the graph with $V(G_r(X,d))=X$ such that for any distinct $x,y \in X$, $xy \in E(G_r(X,d))$ if and only if $d(x,y) \leq r$. 
The following observation about correspondence between tree-decompositions of graphs and metric spaces follows immediately from the definitions.

\begin{obs}\label{o:graphVSspaceTD} 
Let $(X,d)$ be a finite metric space and let $r \geq 0$. 
Then a pair $(T, \mc{X})$ is an $r$-compliant tree-decomposition of $(X,d)$ if and only if $(T, \mc{X})$ is a tree-decomposition of $G_r(X,d)$.
\end{obs}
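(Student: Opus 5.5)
We prove the two implications directly by unwinding the definitions. The two notions are set up so that they match item by item, and the only thing to check is the edge condition.

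The first two conditions, namely $B_t \subseteq X$ and that the bags containing each $x$ induce a non-null subtree of $T$, are literally identical in both definitions, because $V(G_r(X,d)) = X$. So it only remains to compare the third conditions.

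\emph{Compliant implies tree-decomposition.} Suppose $(T,\mc{X})$ is $r$-compliant and let $xy \in E(G_r(X,d))$. By the definition of $G_r(X,d)$, the vertices $x,y$ are distinct with $d(x,y) \leq r$. Compliance then gives some $t$ with $x,y \in B_t$.

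\emph{Tree-decomposition implies compliant.} Suppose $(T,\mc{X})$ is a tree-decomposition of $G_r(X,d)$ and let $x,y \in X$ with $d(x,y) \leq r$. There are two cases.
\begin{itemize}
\item If $x \neq y$, then $xy$ is an edge of $G_r(X,d)$, so some bag contains both $x$ and $y$.
\item If $x = y$, the subtree $\{t : x \in B_t\}$ is non-null, so some bag contains $x$, which is all that is required.
\end{itemize}

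The only subtle point is this degenerate case $x = y$, and the non-null subtree condition handles it. Finiteness of $X$ plays no role beyond making $G_r(X,d)$ a (finite) graph in the paper's convention.
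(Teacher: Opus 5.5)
Your proof is correct and is the same definition-unwinding the paper relies on when it says the observation follows immediately from the definitions. Handling the degenerate case $x=y$ through the non-null subtree condition is the one detail worth spelling out, and you did.
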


Let $(X,d)$ be a metric space.
We say that a set $W \subseteq X$ is \emph{$(k,r)$-centered} if there exist $U_1,\ldots,U_k \subseteq X$ such that $W \subseteq \bigcup_{i=1}^k U_i$ and $\diam(U_i) \leq r$ for every $1 \leq i \leq k$. 
We say that a tree-decomposition $(T, \{B_t: t \in V(T)\})$ of $(X,d)$ is \emph{$(k,r)$-centered} if $B_t$ is $(k,r)$-centered for every $t \in V(T)$.

A \emph{premetric on a set $X$} is a symmetric function $\ell : X \times X \to \bb{R}_{\geq 0} \cup \{+ \infty\}$ such that $\ell(x,x)=0$ for every $x \in X$. 
Given a premetric $\ell$ on $X$, the \emph{metric derived from $\ell$}, denoted by $d_{\ell}$, is the metric on $X$ such that $$d_{\ell}(x,y) = \inf (\ell(x,x_1)+\ell(x_1,x_2)+\ldots + \ell(x_k,y)),$$
where the infimum is taken over all finite sequences $(x_1,\ldots,x_k)$ of points in $X$.
Note that $d_\ell$ is the metric space defined by the weighted complete graph on $X$ with edge-length $\ell(x,y)$ for any distinct $x,y \in X$.

Let $(X,d)$ be a metric space, and let $J$ be a finite or infinite graph with vertex set $X$. 
We define the \emph{$(r,J)$-compression of $d$} to be the metric $d_\ell$, where $\ell$ is the premetric satisfying
\begin{equation}
	\ell(x,y) = \begin{cases} \min \{r, d(x,y)\} \qquad &\mathrm{if} \; xy \in E(J), \\
		 d(x,y) \qquad &\mathrm{if} \; xy \not \in E(J). 
	\end{cases}
\end{equation}	     

For real numbers $r$ and $D$, we say that a subset $Y$ of a metric space $(X,d)$ is \emph{$(r,D)$-controlled} if there exists a partition\footnote{In this paper, members of a partition can be empty sets.} $\mc{U}$ of $Y$ such that 
	\begin{itemize}
		\item $d(U,U') > r$ for any distinct $U, U' \in \mc{U}$, and
		\item $\diam (U') \leq D$ for every $U \in \mc{U}$.
	\end{itemize} 
We say that such a partition $\mc{U}$ is an \emph{$(r,D)$-witness for $Y$}.
We say that a coloring $c : X \to [k]$ is an \emph{$(r,D)$-controlled $k$-coloring of $(X,d)$} if $c^{-1}(\{i\})$ is $(r,D)$-controlled for every $i \in [k]$. 
We say that $(\mc{U}_1,\ldots,\mc{U}_k)$ is an \emph{$(r,D)$-witness for $c$} (or, occasionally, simply a \emph{witness}, when the parameters are clear from the context) if $\mc{U}_i$ is an $(r,D)$-witness for $c^{-1}(\{i\})$ for each $i \in [k]$.

The following is clear.  
 
\begin{obs}\label{o:graphVSspaceColor} 
Let $(X,d)$ be a finite metric space.
Let $r,N >0$ and $k \in \bb{Z}_+$.
Let $G=G_r(X,d)$. 
If $c$ is a $k$-coloring of $G$ of weak diameter in $G$ at most $N$, then $c$ is an $(r,Nr)$-controlled $k$-coloring of $(X,d)$.
\end{obs}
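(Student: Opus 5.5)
The plan is to unwind the definitions directly, since the observation should follow from them. Let $c$ be a $k$-coloring of $G=G_r(X,d)$ of weak diameter in $G$ at most $N$. For each color $i\in[k]$, I will take as the candidate witness $\mc{U}_i$ the partition of $c^{-1}(\{i\})$ into vertex sets of the $c$-monochromatic components of $G$ of color $i$. These sets are pairwise disjoint and cover $c^{-1}(\{i\})$, so $\mc{U}_i$ is a partition of $c^{-1}(\{i\})$. It remains to verify the two properties of an $(r,Nr)$-witness.

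\emph{Separation.} Let $U\neq U'$ be in $\mc{U}_i$, and suppose $d(x,y)\le r$ for some $x\in U$ and $y\in U'$. Then $x$ and $y$ are distinct and adjacent in $G$, and both have color $i$. Hence the union of the two components together with the edge $xy$ is a connected monochromatic subgraph, which contradicts maximality. Since $X$ is finite, the infimum $d(U,U')$ is attained as a minimum, so $d(U,U')>r$.

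\emph{Diameter bound.} Let $U\in\mc{U}_i$ and $x,y\in U$. By hypothesis, $\dist_G(x,y)\le N$, so there is a path $x=x_0,x_1,\ldots,x_m=y$ in $G$ with $m\le N$. Each edge satisfies $d(x_{j-1},x_j)\le r$, so the triangle inequality gives $d(x,y)\le mr\le Nr$. Therefore $\diam(U)\le Nr$.

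So each $c^{-1}(\{i\})$ is $(r,Nr)$-controlled, and $(\mc{U}_1,\ldots,\mc{U}_k)$ is a witness for $c$. I do not expect any step to be an obstacle. The only points to watch are that the infimum in the separation condition is a minimum, which holds because $X$ is finite, and that distances may be infinite. The latter is harmless, because $G$-adjacency requires $d\le r$.
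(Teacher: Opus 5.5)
Your proof is correct and is exactly the definition-unwinding the paper intends; the paper gives no proof, only the remark that the observation is clear. Your witness for each color is the family of vertex sets of the $c$-monochromatic components of that color, which is the same witness the paper uses when it applies this observation in the proof of \cref{center_bag_metric}. Both verifications are sound: separation follows from the maximality of components together with finiteness of $X$, and the diameter bound follows from the triangle inequality along paths of length at most $N$ whose edges have length at most $r$.
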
 

Let $(X,d_X)$ and $(Y,d_Y)$ be metric spaces.
For $\alpha,\beta,r \geq 0$, we say that a map $\phi: X \to Y$ is \emph{$(\alpha,\beta,r)$-constrained} if for all $x,x' \in X$,
	\begin{itemize}	
		\item $d_Y(\phi(x),\phi(x')) \geq \alpha d_X(x,x')-\beta$, and
		\item if $d_X(x,x') \leq r$, then $d_Y(\phi(x),\phi(x')) \leq r$.
	\end{itemize} 

\begin{lemma}\label{l:constrained}
Let $\alpha >0$ and $\beta,r,D \geq 0$. 
Let $(X,d_X)$ and $(Y,d_Y)$ be finite metric spaces, and let $\phi: X \to Y$ be $(\alpha,\beta,r)$-constrained. 
Let $S$ be a subset of $Y$.
If $S$ is $(r,D)$-controlled with witness $\U$, then $\phi^{-1}(S)$ is $(r, (D+\beta)/\alpha)$-controlled with witness $\{\phi^{-1}(U): U \in \mc{U}, U \neq \emptyset\}$.
\end{lemma}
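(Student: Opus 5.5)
The proof is a direct verification of the two conditions in the definition of an $(r,D')$-witness, with $D'=(D+\beta)/\alpha$. Set $\mc{U}'=\{\phi^{-1}(U): U \in \mc{U}, U \neq \emptyset\}$. Since $\mc{U}$ is a partition of $S$, the preimages $\phi^{-1}(U)$ for $U \in \mc{U}$ are pairwise disjoint and their union is $\phi^{-1}(S)$. So $\mc{U}'$ is a partition of $\phi^{-1}(S)$, possibly with empty members, which the paper allows. One caveat: two distinct nonempty $U,U'$ have disjoint preimages, so as a multiset $\mc{U}'$ is indexed by $U$. I will treat distinct members of $\mc{U}'$ as coming from distinct $U \neq U'$.

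For the separation condition, take distinct $U,U' \in \mc{U}$, and let $x \in \phi^{-1}(U)$ and $x' \in \phi^{-1}(U')$. I argue by contraposition. If $d_X(x,x') \leq r$, then the second constraint gives $d_Y(\phi(x),\phi(x')) \leq r$. But $\phi(x) \in U$ and $\phi(x') \in U'$, and $d_Y(U,U') > r$, which is a contradiction. Hence $d_X(x,x') > r$ for every such pair. Because $X$ is finite, the infimum over these pairs is attained, so $d_X(\phi^{-1}(U),\phi^{-1}(U')) > r$. This handles the strict inequality cleanly. If one of the preimages is empty, the infimum is $+\infty$ and the condition holds trivially.

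For the diameter condition, take $x,x' \in \phi^{-1}(U)$. The first constraint gives $\alpha d_X(x,x') - \beta \leq d_Y(\phi(x),\phi(x')) \leq \diam(U) \leq D$. Therefore $d_X(x,x') \leq (D+\beta)/\alpha$, using $\alpha > 0$.

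I do not expect a real obstacle here. The only delicate points are the strict inequality in the separation condition, which finiteness of $X$ (or simply the pointwise bound) settles, and the empty preimages, which are harmless.
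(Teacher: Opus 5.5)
Your proof is correct and is the same direct verification the paper gives: the preimages of the witness partition $\phi^{-1}(S)$, the second constraint (used contrapositively) gives the separation $>r$, and the first constraint gives the diameter bound $(D+\beta)/\alpha$. You use finiteness of $X$ to turn the pointwise bound $d_X(x,x')>r$ into a strict bound on the infimum, which fills in a step the paper leaves implicit; without finiteness the pointwise bound alone would only give $\geq r$, so your parenthetical ``or simply the pointwise bound'' needs finiteness behind it.
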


\begin{proof}
Let $\mc{U}$ be an $(r,D)$-witness for $S$. 
As $\mc{U}$ is a partition of $S$, the family $\phi^{-1}(\mc{U}) = \{\phi^{-1}(U): U \in \mc{U}, U \neq \emptyset\}$ is a partition of $\phi^{-1}(S)$.  
As $\diam(U) \leq D$ for every $U \in \mc{U}$ and  $\phi$ is $(\alpha,\beta,r)$-constrained, we have $\diam(\phi^{-1}(U)) \leq (D+\beta)/\alpha $. 
As $d_Y(U,U') > r$ for any distinct $U,U' \in \mc{U}$, we have $d_X(\phi^{-1}(U),\phi^{-1}(U'))>r$. 
Thus $\phi^{-1}(\mc{U})$ is an $(r, (D+\beta)/\alpha)$-witness for $\phi^{-1}(S)$.
\end{proof}	

\begin{lemma} \label{c:constrainedColor}	
Let $\alpha >0$, $\beta,r,D \geq 0$. 
Let $(X,d_X)$ and $(Y,d_Y)$ be finite metric spaces, and let $\phi: X \to Y$ be $(\alpha,\beta,r)$-constrained. 
If $c$ is an $(r,D)$-controlled $k$-coloring of $(Y,d_Y)$ with witness $(\mc{V}_i: i \in [k])$ for some positive integer $k$, then $c \circ \phi$ is an $(r, (D+\beta)/\alpha)$-controlled $k$-coloring of $(X,d_X)$ with witness $(\mc{U}_i: i \in [k])$, where $\mc{U}_i=\{\phi^{-1}(S): S \in \mc{V}_i, S \neq \emptyset\}$.
\end{lemma}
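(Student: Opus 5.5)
This follows by applying \cref{l:constrained} once for each color class. The plan is to check two facts: that the color classes of $c\circ\phi$ are exactly the preimages of the color classes of $c$, and that the proposed families $\mc{U}_i$ are precisely the witnesses that \cref{l:constrained} produces.

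First, for each $i\in[k]$ we have $(c\circ\phi)^{-1}(\{i\})=\phi^{-1}(c^{-1}(\{i\}))$. This is immediate from the definition of composition, and it identifies the $i$-th color class of $c\circ\phi$ as the preimage under $\phi$ of the subset $S_i=c^{-1}(\{i\})$ of $Y$. Since $(\mc{V}_i: i\in[k])$ is an $(r,D)$-witness for $c$, by definition $\mc{V}_i$ is an $(r,D)$-witness for $S_i$. In particular, $S_i$ is $(r,D)$-controlled with witness $\mc{V}_i$.

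Second, we apply \cref{l:constrained} to the $(\alpha,\beta,r)$-constrained map $\phi$ and the set $S=S_i$ with witness $\mc{V}_i$. It yields that $\phi^{-1}(S_i)$ is $(r,(D+\beta)/\alpha)$-controlled with witness $\{\phi^{-1}(U): U\in\mc{V}_i, U\neq\emptyset\}$.

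The only point needing care is matching the indexing in the statement, where $\mc{U}_i=\{\phi^{-1}(S): S\in\mc{V}_i, S\neq\emptyset\}$. There $S$ is a dummy variable ranging over $\mc{V}_i$, so this is literally the family produced by \cref{l:constrained}. Any empty preimages $\phi^{-1}(U)$ with $U\neq\emptyset$ do no harm, since partitions in this paper may contain empty members. The distance condition between distinct parts is still inherited as in the proof of \cref{l:constrained}: the infimum over an empty set is $+\infty>r$, which is consistent with the extended-metric convention.

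Combining the two facts for every $i\in[k]$, each color class $(c\circ\phi)^{-1}(\{i\})$ is $(r,(D+\beta)/\alpha)$-controlled with witness $\mc{U}_i$. Hence $c\circ\phi$ is an $(r,(D+\beta)/\alpha)$-controlled $k$-coloring of $(X,d_X)$ with witness $(\mc{U}_i: i\in[k])$. I expect no genuine obstacle, since everything reduces to \cref{l:constrained}.
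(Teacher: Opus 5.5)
Your proposal is correct and is essentially the paper's own proof: it identifies each color class of $c\circ\phi$ as $\phi^{-1}(c^{-1}(\{i\}))$ and applies \cref{l:constrained} with witness $\mc{V}_i$, once for each color. Your extra remark about empty preimages is a harmless piece of care that the paper leaves implicit.
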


\begin{proof}
For every color $i \in [k]$, $\mc{V}_i$ is an $(r,D)$-witness for $c^{-1}(\{i\})$; by \cref{l:constrained}, the set $\phi^{-1}(c^{-1}(\{i\}))$ is $(r, (D+\beta)/\alpha)$-controlled with witness $(\mc{U}_i: i \in [k])$, where $\mc{U}_i=\{\phi^{-1}(S): S \in \mc{V}_i: S \neq \emptyset\}$.
It implies this lemma.
\end{proof}	

We convert results on coloring of graphs which admit certain kinds of tree-decomposition into the language of covers of metric spaces, starting with
the following weakening of \cite[Lemma 5.7]{Liu25} (taking $\ell=1$ and $\mu=r$).
 
\begin{lemma}[{{\cite[Lemma 5.7]{Liu25}}}] \label{center_bag_basic}
For any positive integers $k$ and $r$, there exists a positive integer $N = N_{\ref{center_bag_basic}}(k,r)$ such that if $G$ is a graph that has a $(k,r)$-centered tree-decomposition, then there exists a $2$-coloring of $G$ with weak diameter in $G$ at most $N$.
\end{lemma}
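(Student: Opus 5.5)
The plan is to prove a stronger, precoloring-extension version of the statement, by induction on $k$ and then on $|V(G)|$, in the same spirit as the proof of \cref{t:main3}. The stronger statement is as follows. Let $G$ have a $(k,r)$-centered tree-decomposition $(T,\{B_t\})$, where diameters are measured in $\dist_G$, so every bag is covered by $k$ sets of weak diameter at most $r$. Let $\mc{Z}\subseteq V(G)$ be a set of weak diameter at most some constant $M=M(k,r)$, and let $c_0$ be any $2$-coloring of $\mc{Z}$. Then $c_0$ extends to a $2$-coloring of $G$ of weak diameter at most $N(k,r)$. It suffices to treat connected $G$, since components can be colored independently. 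The lemma is the case $\mc{Z}=\emptyset$.

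For the base case $k=1$, every bag has weak diameter at most $r$, so $G$ has bounded tree-length. Fix a root vertex $v_0$ (inside $\mc{Z}$ if $\mc{Z}\neq\emptyset$) and use the classical layering argument. Partition $V(G)$ into annuli $A_i=\{v: im\le \dist_G(v,v_0)<(i+1)m\}$ with $m\gg r$, and color $A_i$ by the parity of $i$. The precolored set $\mc{Z}$ is absorbed by making the first annulus thick enough to contain it, and recoloring only inside it. Each monochromatic component then lies inside a single annulus. Bounding its weak diameter is the standard tree-length argument. Take $u,w$ in the same component, and geodesics from $u$ and from $w$ back to $v_0$. These geodesics must pass through a bag separating the subtree containing the component from the root, at distance roughly $im - O(r)$ from $v_0$. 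That bag has weak diameter at most $r$, which gives $\dist_G(u,w)\le 2m+O(r)$.

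For the inductive step, root $T$ at a node whose bag meets $\mc{Z}$. Let $B$ be the union of the root bag and the $\rho$-neighbourhood of $\mc{Z}$, for a suitable radius $\rho$. I will color a bounded-radius region around $B$ essentially as one block, surrounded by a buffer layer of the opposite color, as in the definition of $c_2$ in the proof of \cref{t:main3}. Then remove this region. Each remaining component hangs off a subtree of $T$, and its interface with the removed region lies near one of the at most $k$ clusters of the root bag, so it has bounded weak diameter. Each such component therefore inherits a precolored set of bounded weak diameter, and the induction on $|V(G)|$ applies. The induction on $k$ is needed where a component's restricted decomposition loses a cluster. When the $k$ clusters of the bags along a branch separate far from each other, the graph beyond the separation point decomposes into parts each essentially covered by fewer clusters. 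On those parts, the hypothesis for $k-1$ supplies a coloring that can then be glued in.

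The main obstacle is the gluing step: showing that a monochromatic component cannot snake through many pieces. A component that leaves a piece must pass through the buffer layer, which forces a color change, exactly as the argument with $X_4,X_5$ at the end of the proof of \cref{t:main3} forces $c_2(X_4)\neq c_2(X_5)$. The delicate quantitative point is choosing the constants in the order $r\ll\rho\ll M\ll N$. The condition to meet is that interfaces of size $O(k\rho)$ remain within the allowed precolored diameter $M$ at every level of the recursion, without $M$ growing with the depth of $T$. I expect this to require measuring weak diameter in the original graph $G$ rather than in subgraphs, and using the centered property of the single bag that separates each piece from the rest.
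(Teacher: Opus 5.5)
The paper gives no proof of this lemma; it imports it from \cite[Lemma 5.7]{Liu25}, so your argument has to stand on its own, and it has a genuine gap. Your base case $k=1$ is the standard tree-length layering argument and is fine. The inductive step, however, rests on a false claim and on an invariant that cannot be maintained.

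You assert that each component of $G-R$ meets the removed region only near one of the at most $k$ clusters of the root bag. Take the cycle $v_0v_1\cdots v_{L-1}$ with the path-decomposition whose bags are $\{v_0,v_i,v_{i+1}\}$. It is $(2,1)$-centered, and every bag meets $\mc{Z}=\{v_0\}$. Rooting at $\{v_0,v_{L/2},v_{L/2+1}\}$, the region $R$ consists of two arcs at distance about $L/2$. Each of the two components of $G-R$ is attached to both arcs. So the precolored set you pass down has weak diameter about $L/2$ in $G$, and the induction hypothesis does not apply.

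Removing only a neighborhood of $\mc{Z}$ does not help either. The vertices of a component adjacent to the colored region lie at distance $\rho+1$ from $\mc{Z}$ and may span weak diameter $M+2\rho+2>M$. On the same cycle, the two interface vertices move apart by $2\rho+2$ at every level. So $M$ grows with the depth of $T$, which is exactly the point you flag and leave open.

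The remaining two ingredients are not arguments yet.

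\begin{itemize}
\item The claim that beyond a separation point the graph splits into parts covered by fewer clusters is not justified. Wherever $k$ does drop, it is because a far-away cluster has been cut off, and that again produces an interface with several far-apart pieces, contradicting your bounded-diameter invariant.
\item The buffer-layer gluing copied from the proof of \cref{t:main3} works there only because Claim~1 provides a global coloring $c_1$ of the skeleton $I(\mc{U})$. That coloring controls the monochromatic components staying close to the roots. Nothing plays that role in your recursion, so a monochromatic component can chain through the precolored interfaces of several sibling pieces. Its weak diameter can then reach roughly $2N+O(M+\rho)$ unless the sub-instances give a stronger guarantee than the one you state.
\end{itemize}

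What is missing is an invariant that is stable under the recursion, with three parts:
\begin{itemize}
\item The precolored set must be allowed to consist of a bounded number of bounded-radius pieces that may be arbitrarily far apart.
\item The splitting bag must be chosen (e.g.\ balanced with respect to those pieces) so that each branch inherits boundedly many pieces of bounded radius, independently of the depth of $T$.
\item The extension rule must guarantee that no monochromatic component comes close to two different pieces.
\end{itemize}
Without such a mechanism, the constants $r\ll\rho\ll M\ll N$ cannot be fixed independently of $G$.
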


\begin{lemma} \label{center_bag_metric}
For all $R \geq r > 0$ and positive integer $k$, there exists a positive integer $D= D_{\ref{center_bag_metric}}(k,r,R)$ such that if $(X,d)$ is a finite metric space that has an $r$-compliant $(k,R)$-centered tree-decomposition $(T,\{B_t: t \in V(T)\})$, then there exists an $(r,D)$-controlled $2$-coloring of $(X,d)$ with an $(r,D)$-witness $(\mc{U}_1,\mc{U}_2)$ such that for any $t \in V(T)$ and $i \in \{1,2\}$, $B_t$ intersects at most $k$ elements of $\mc{U}_i$. 
\end{lemma}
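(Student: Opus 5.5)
The plan is to reduce to the graph statement \cref{center_bag_basic}. To do this, I pass from $(X,d)$ to an auxiliary graph in which every bag is covered by $k$ cliques, so that the centering parameter becomes $1$ regardless of $R$. Fix, for each $t \in V(T)$, sets $U^t_1,\ldots,U^t_k \subseteq X$ with $B_t \subseteq \bigcup_{j} U^t_j$ and $\diam(U^t_j) \leq R$. Replacing $U^t_j$ by $U^t_j \cap B_t$, I may assume $U^t_j \subseteq B_t$. Let $G=G_r(X,d)$, and let $H$ be obtained from $G$ by adding an edge between any two distinct points lying in a common $U^t_j$.

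The first step is to check that $(T,\{B_t\})$ is still a tree-decomposition of $H$. By \cref{o:graphVSspaceTD} it is a tree-decomposition of $G$, since it is $r$-compliant. Every new edge has both ends in some $U^t_j \subseteq B_t$. In $H$ each $U^t_j$ is a clique, so it has weak diameter at most $1$ in $H$. Hence the decomposition is $(k,1)$-centered with respect to $H$, and \cref{center_bag_basic} gives a $2$-coloring $c$ of $H$ of weak diameter in $H$ at most $N=N_{\ref{center_bag_basic}}(k,1)$. I set $D=NR$; here $R \geq r$ is used.

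The second step converts $c$ into an $(r,D)$-controlled coloring of $(X,d)$. For $i\in\{1,2\}$, let $\mc{U}_i$ be the set of vertex sets of the $c$-monochromatic components of $H$ of color $i$; this is a partition of $c^{-1}(\{i\})$. Every edge $xy$ of $H$ satisfies $d(x,y)\leq R$: either $d(x,y)\leq r\leq R$, or $x,y$ lie in a common set of diameter at most $R$. By the triangle inequality, $d(x,y)\leq R\cdot \dist_H(x,y)$ for all $x,y$, so each member of $\mc{U}_i$ has $d$-diameter at most $NR=D$. If $x,y$ lie in distinct members of $\mc{U}_i$, they have the same color but are nonadjacent in $H$, since otherwise they would be in one monochromatic component. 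In particular they are nonadjacent in $G$, so $d(x,y)>r$. As $X$ is finite, this gives $d(U,U')>r$ for distinct $U,U'\in\mc{U}_i$. Thus $(\mc{U}_1,\mc{U}_2)$ is an $(r,D)$-witness for $c$.

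The final condition, that each $B_t$ meets at most $k$ members of $\mc{U}_i$, is where the clique structure matters, and it is the step to verify most carefully. The points of color $i$ in a single $U^t_j$ are pairwise adjacent in $H$, so they lie in one monochromatic component, i.e.\ in one member of $\mc{U}_i$. Since $B_t\subseteq \bigcup_{j=1}^k U^t_j$, the set $B_t$ meets at most $k$ members of $\mc{U}_i$. The only subtlety is that \cref{center_bag_basic} is stated with integer parameters. Using centering parameter $1$ in $H$ avoids this, and $D$ depends only on $k$ and $R$, as required.
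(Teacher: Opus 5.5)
Your proof is correct and follows the paper's strategy. You turn each bag into a union of $k$ cliques by adding edges of $d$-length at most $R$, apply \cref{center_bag_basic} with parameters $(k,1)$, take the monochromatic components as the witness, and use that a clique meets at most one component of each color. The only difference is bookkeeping: you bound diameters directly via $d(x,y)\le R\cdot \dist_H(x,y)$ instead of passing through the $(r,J)$-compression and \cref{c:constrainedColor}. This gives $D=NR$ rather than the paper's $2NR$; take $\lceil NR\rceil$ if an integer is required, a point the paper's $2NR$ also glosses over.
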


\begin{proof}
Let $N=N_{\ref{center_bag_basic}}(k,1)$, and define $D=2NR$.

Let $J$ be a graph with vertex set $X$ such that for any distinct $x,y \in V(J)$, $xy \in E(J)$ if and only if $d(x,y) \leq 2R$ and $x,y \in B_t$ for some $t \in V(T)$. 
Since $(T,\{B_t: t \in V(T)\})$ is $(k,R)$-centered, we know that $(T,\{B_t: t \in V(T)\})$ is a tree-decomposition of $J$ such that every bag is a union of at most $k$ cliques.
Let $d'$ be the $(r,J)$-compression of $d$. 
Note that $(T,\{B_t: t \in V(T)\})$ is still an $r$-compliant tree-decomposition of $(X,d')$ and it is now $(k,r)$-centered. 
Let $G = G_r(X,d')$. 
By \cref{o:graphVSspaceTD}, $(T,\{B_t: t \in V(T)\})$ is a $(k,1)$-centered tree-decomposition of $G$. 
Moreover, every bag of $(T,\{B_t: t \in V(T)\})$ is a union of at most $k$ cliques in $G$.
By \cref{center_bag_basic}, there exists a $2$-coloring $c: V(G) \to \{1, 2\}$ of $G$ with weak diameter in $G$ at most $N=N_{\ref{center_bag_basic}}(k,1)$. 
By \cref{o:graphVSspaceColor}, there exists an $(r,Nr)$-controlled $2$-coloring of $(X,d')$. 
Consider the identity map $X \to X$ as a map between metric spaces $(X,d)$ and $(X,d')$. 
As this map is $(\frac{r}{2R},0,r)$-constrained, $c$ is an $(r,2NR)$-controlled 2-coloring of $(X,d)$ by \cref{c:constrainedColor}.
For every $i \in [2]$, let $\mc{U}_i$ be the set consisting of the vertex sets of the $c$-monochromatic components in $G$ of color $i$.
Then $(\mc{U}_1,\mc{U}_2)$ is an $(r,D)$-witness for $c$ (in terms of a coloring of $(X,d)$) by \cref{c:constrainedColor}.

Let $t \in V(T)$ and $i \in [2]$.
Note that every clique in $G$ intersects at most one $c$-monochromatic component with color $i$.
That is, every clique in $G$ intersects at most one element of $\U_i$.
Since $B_t$ is a union of at most $k$ cliques in $G$, we know that $B_t$ intersects at most $k$ elements of $\U_i$.
\end{proof}	

We will also need the following special case of \cite[Theorem 4.1]{CroLiu25} (taking the list-assignment of every vertex in $V(G)-Z$ to be $\{1,2\}$).

\begin{lemma}[{{\cite[Theorem 4.1]{CroLiu25}}}] \label{tw_basic}
For any positive integers $w$ and $s$, there exists a positive integer $N  = N_{\ref{tw_basic}}(w,s)$ such that for every graph $G$ of tree-width at most $w$ and every $Z \subseteq V(G)$, if $c_0$ is a coloring of $G[Z]$ with weak diameter in $G$ at most $s$, then $c_0$ can be extended to a coloring of $G$ with weak diameter in $G$ at most $N$ such that $c_0(v) \in \{1,2\}$ for every $v \in V(G)-Z$.
\end{lemma}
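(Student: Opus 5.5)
The plan is to prove a stronger, inductive statement and then specialize it. The obstacle is that vertices of $V(G)-Z$ must receive colors in $\{1,2\}$, and $c_0$ may also use $1$ and $2$. A newly colored region could therefore glue together many $c_0$-monochromatic pieces of $Z$ into an unbounded chain. The strengthened statement is meant to rule this out. It says the following: if $(T,\{B_t\})$ is a tree-decomposition of $G$ of width at most $w$, then every monochromatic component of the extension consists of a bounded-weak-diameter part of $V(G)-Z$ together with at most a bounded number (depending on $w$ and $s$) of $c_0$-monochromatic components of $G[Z]$. The argument is an induction on $w$, and within a fixed $w$ a recursion along a rooted tree-decomposition.

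\textbf{Step 1 (the recursion).} Fix a root $t_0$ of $T$. For a vertex set $A$ which is a bag, or more generally a union of at most $w+1$ bounded-weak-diameter pieces, take BFS layers $L_0=A, L_1, L_2,\dots$ in $G$. Group the layers into consecutive blocks of a fixed large thickness $m=m(w,s)$, and alternate blocks of ``free'' layers with ``buffer'' layers.

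\textbf{Step 2 (vertices outside $Z$).} Vertices of $V(G)-Z$ in a buffer layer get the color opposite to the one used on the adjacent free block. This is the same trick as the $4$/$5$ distance switch in the proof of \cref{t:main3}. It cuts monochromatic components at bounded depth from the current root piece, and it works because the thickness exceeds $s$.

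\textbf{Step 3 (inside a free block).} Inside a free block, the graph induced on a component of the block minus a separator still has tree-width at most $w$. Its interface with the rest consists of at most $w+1$ vertices of an adhesion set, plus the boundary layers, which are already colored. So we recurse on each such component, treating the interface as the new precolored root piece. The key point, standard in Distel--Liu style arguments, is this. When we restrict to a component $C$ of $G$ minus the root bag's neighborhood, the portion of $C$ adjacent to the precolored part is controlled by a bag of size at most $w+1$. For $w=0$ the graph is edgeless and the claim is trivial. This is what makes induction on $w$ possible: deleting a bag lowers the ``number of unbounded directions'' by one, so the relevant pieces have a tree-decomposition of smaller effective width relative to the interface.

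\textbf{Step 4 (bounding components).} Here we check that each monochromatic component meets only a bounded number of $c_0$-components and of recursion levels, and hence has weak diameter at most some $N(w,s)$. For a monochromatic path, each time it crosses from one recursion region into a child region it must pass through an interface of at most $w+1$ vertices. It cannot traverse a buffer layer of a block, because buffer colors alternate. Therefore the path stays within a bounded number of consecutive levels and a bounded distance of a single root piece. The $c_0$-pieces of $Z$ it uses each have weak diameter at most $s$, and there are at most boundedly many of them, since each must lie within the bounded region found above.

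I expect the main obstacle to be Step 4, specifically showing that $Z$-components cannot be chained together across regions. A chain could go from a $Z$-component, through a newly colored blob, to another $Z$-component, and so on. To stop this, the first thing I would try is to choose the block thickness $m>2s+2$ and to treat each $c_0$-component of $Z$ touching a free block as part of the interface of the next recursion. The cost is a bounded inflation of the interface: at most $w+1$ pieces, each of weak diameter at most $s$. This is exactly why the strengthened hypothesis allows the precolored root to be a union of boundedly many bounded-weak-diameter pieces rather than just a bag.
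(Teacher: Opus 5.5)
The paper does not prove this lemma; it quotes it as the special case of \cite[Theorem 4.1]{CroLiu25} in which every vertex of $V(G)-Z$ has list $\{1,2\}$. Your outline therefore has to stand on its own, and it has genuine gaps. First, the strengthened statement you intend to carry through the induction is false. Let $G$ be a star with centre $v\notin Z$ and leaves $z_1,\dots,z_{2n}\in Z$, with $c_0(z_i)=1$ for $i\le n$ and $c_0(z_i)=2$ for $i>n$. Then $\tw(G)=1$ and every $c_0$-monochromatic component of $G[Z]$ is a single vertex. Yet whichever of $1,2$ the vertex $v$ receives, its monochromatic component contains $n$ distinct $c_0$-components. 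Weak diameter stays bounded here only because everything lies within distance $1$ of $v$, so the number of precoloured pieces per monochromatic component cannot be the controlled quantity. The same example defeats the repair in your last paragraph. The $Z$-components touching a free block (even those adjacent to a single adhesion vertex) can be unboundedly many, so absorbing them does not give ``at most $w+1$ pieces of weak diameter at most $s$''. Moreover, after absorbing whole $Z$-components the interface is no longer contained in a bag, which is the only structure Step~3 relies on.

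Second, the buffer mechanism does not bound anything. The $4$/$5$ switch in the proof of \cref{t:main3} works because the uniformly coloured region (members of $\mathcal{S}_U$ within distance $4$ of $U$) contains no precoloured vertices and has weak diameter at most $8$. Here, to stop $Z$-components from crossing, you need uniformly coloured bands of thickness more than $s$ at every depth. Such bands far from the root can contain connected subgraphs of unbounded weak diameter even when $\tw(G)=2$ and $Z=\emptyset$. For example, join an apex $a$ by internally disjoint paths of length $L$ to the vertices of a path $p_1p_2\cdots p_{2L+1}$. The BFS layer at distance $L$ from $a$ is exactly that path, of weak diameter $2L$, and such a graph can be hung far from whatever root you pick. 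If buffer colours are not uniform, the ``colours alternate'' argument is gone. As written, ``the colour opposite to the one used on the adjacent free block'' is not even defined, since a free block is coloured recursively with both colours and a buffer meets two free blocks.

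Finally, Steps~3 and~4 assert the actual content of the lemma without proof. A component of a block attaches to the already coloured part along an arbitrary subset of a layer, not along one adhesion set. Deleting a bag does not lower treewidth, so your induction on $w$ is never applied to a graph of smaller width. The genuine width drop (for a connected graph of treewidth $w$, a single BFS layer $L_i$ with $i\ge 1$ has treewidth at most $w-1$, by contracting $L_0\cup\dots\cup L_{i-1}$ to an apex) concerns single layers, not your blocks, and you never use it. And nothing shows that a monochromatic path stays within boundedly many levels near one root piece: interfaces of size $w+1$ limit neither how often nor how far sideways a path can travel. What is needed is a mechanism that attaches each recursive piece to the coloured part through a set of bounded weak diameter while preventing precoloured components from being chained, and your outline leaves it unspecified.
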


\begin{lemma} \label{centered_bag_color_0}
For any $R \geq r > 0$ and $k \in \mathbb{Z}_+$, there exists $D=D_{\ref{centered_bag_color_0}}(k,r,R)>0$ such that the following holds.
Let $(X,d)$ be a finite metric space that admits an $r$-compliant $(k,R)$-centered tree-decomposition.
Let $Z \subseteq X$, and let $c_Z: Z \rightarrow [2]$.
If each $c_Z^{-1}(\{i\})$ is $(r,0)$-controlled in $(X,d)$, then $c_Z$ extends to an $(r,D)$-controlled $2$-coloring of $(X,d)$. 
\end{lemma}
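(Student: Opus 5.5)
Following the proof of \cref{center_bag_metric}, we convert the metric problem into a graph problem and then apply \cref{tw_basic} in place of \cref{center_bag_basic}. The difficulty is that \cref{tw_basic} needs bounded tree-width, whereas our bags are only $(k,R)$-centered and may be arbitrarily large. We therefore contract each bounded-diameter piece of a bag to a single vertex, which leaves at most $k$ vertices per bag.

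\textbf{Step 1 (contraction).} Let $(T,\{B_t\})$ be the given decomposition. For each $t$, fix $U^t_1,\dots,U^t_k$ of diameter at most $R$ covering $B_t$. Let $J$ be the graph on $X$ in which $xy\in E(J)$ iff $d(x,y)\le 2R$ and $x,y\in B_t$ for some $t$, and let $d'$ be the $(r,J)$-compression of $d$, exactly as in \cref{center_bag_metric}. Then $(T,\{B_t\})$ is an $r$-compliant $(k,r)$-centered tree-decomposition of $(X,d')$. The identity map $(X,d)\to(X,d')$ is $(\frac{r}{2R},0,r)$-constrained, so by \cref{c:constrainedColor} it suffices to obtain an $(r,D')$-controlled extension in $d'$ and then set $D=2RD'/r$.

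\textbf{Step 2 (the condition on $c_Z$ survives compression).} Each $c_Z^{-1}(\{i\})$ is $(r,0)$-controlled in $d$, so it is a set of points pairwise at $d$-distance greater than $r$. This can fail in $d'$, because compression shrinks distances. I would handle this by passing to $G=G_r(X,d')$ and checking that the coloring $c_Z$ has weak diameter in $G$ bounded by some $s=s(k)$. A monochromatic $G$-path from $z$ to $z'$ uses edges with $d'\le r$, and distinct points of the same color are $d$-far apart. Using the tree structure together with the bound of $k$ cliques per bag, I expect such a path to stay within boundedly many bags. If this fails, I would instead first recolor: the natural modification is to use the original $d$ for pairs inside $Z$, i.e.\ not compress edges of $J$ with both ends in $Z$, and re-verify Step 1.

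\textbf{Step 3 (bounded tree-width).} Under \cref{o:graphVSspaceTD}, $(T,\{B_t\})$ is a tree-decomposition of $G$ whose bags are unions of at most $k$ cliques. This does not bound the tree-width, so I would apply \cref{tw_basic} to an auxiliary graph $H$ of tree-width at most $k-1$. One candidate is a quotient in which each clique is contracted. The cleaner route is a bounded-width decomposition obtained by taking, for each bag, one representative per clique; these pieces are glued along $T$.

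\textbf{Step 4 (lifting the coloring).} Extend the induced coloring on $H$ by \cref{tw_basic} with weak diameter at most $N_{\ref{tw_basic}}(k,s)$. Lift it to $X$, giving each point the color of its representative and keeping $c_Z$ on $Z$. Then $d'$-diameters grow by at most a bounded factor, and \cref{o:graphVSspaceColor} together with \cref{c:constrainedColor} yields the $(r,D)$-controlled extension.

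The main obstacle is Steps 2 and 3 together. The graph must have bounded tree-width while the precolored set $Z$ is still respected, since points of $Z$ inside one clique may carry different colors. This requires $Z$-vertices to be kept uncontracted, which is affordable only because each clique of diameter at most $r$ contains at most one point of each color, hence at most $2$ points of $Z$.
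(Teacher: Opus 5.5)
Your proposal has a genuine gap. It never produces a bounded tree-width graph on which the precoloring is admissible for \cref{tw_basic}, and the closing justification mixes up the two metrics.

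\textbf{Step 2 fails: compressing before dealing with $Z$ loses the hypothesis.} Take $x_j=\tfrac{3}{2}rj$ on a line, with bags $\{x_j,x_{j+1}\}$ along a path. This is an $r$-compliant $(1,2r)$-centered decomposition. Let $Z=X$ and $c_Z\equiv 1$, which is $(r,0)$-controlled in $d$. After compression $d'(x_j,x_{j+1})=r$, so $G_r(X,d')$ is a long path on which $c_Z$ is monochromatic. Hence no bound $s=s(k)$ as in Step 2 exists. Your fallback of not compressing $Z$--$Z$ pairs destroys the $(k,r)$-centeredness that Step 1 provides.

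\textbf{The final claim is false for the same reason.} A clique of $G_r(X,d')$ has $d'$-diameter at most $r$, but its $d$-diameter can be up to $2R$. The $(r,0)$ hypothesis separates points only in $d$, so one clique can contain arbitrarily many $Z$-points of the same color.

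\textbf{Steps 3 and 4: choices made bag by bag are not consistent across $T$.} Overlapping cliques in neighbouring bags can contract everything to a single vertex. Per-bag representatives give neither the subtree property nor a well-defined lift in Step 4.

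The missing idea is one \emph{global} partition of $X$. Here is how the paper does it.
\begin{enumerate}
\item It applies \cref{center_bag_metric}; this is the only place compression enters. The result is an $(r,D')$-witness $\mc{U}=\mc{U}_1\cup\mc{U}_2$ with $D'=D_{\ref{center_bag_metric}}(k,r,R)$, whose parts have $d$-diameter at most $D'$ and where every bag meets at most $2k$ parts.
\item The witness is chosen with $|\mc{U}_1|+|\mc{U}_2|$ maximum. This forces each set $\{t:U\cap B_t\neq\emptyset\}$ to be a subtree, since otherwise $U$ splits into two $r$-far pieces.
\item It builds a graph on $\mc{U}\times\{0,1,2\}$, where $(U,i)$ represents the points of $U$ with $c_Z$-value $i$ (value $0$ off $Z$). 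Adjacency means $d$-distance at most $r$ in the \emph{original} metric.
\item The bags $\{(U,i):U\cap B_t\neq\emptyset\}$ give width at most $6k-1$. Merging all same-colored $Z$-points of a part into one vertex is exactly what keeps the width bounded.
\item Because adjacency is measured in $d$, the $(r,0)$ hypothesis makes $(U,i)\mapsto i$ a proper precoloring of $\mc{U}\times\{1,2\}$, however many $Z$-points a part contains. So \cref{tw_basic} applies with $s=1$.
\item Pulling back along $x\mapsto(U(x),c_Z(x))$, which is $(r/(D'+r),D',r)$-constrained, finishes the proof via \cref{c:constrainedColor}.
\end{enumerate}
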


\begin{proof} 
Let $N  = N_{\ref{tw_basic}}(6k-1,1)$.
Let $D' = D_{\ref{center_bag_metric}}(k,r,R)$. 
Define $D=(Nr+D')(D'+r)/r$.

Let $(T,\{B_t: t \in V(T)\})$ be an $r$-compliant $(k,R)$-centered tree-decomposition of $(X,d)$. 
By \cref{center_bag_metric} there exists an $(r,D')$-controlled $2$-coloring $c'$ of $(X,d)$ with an $(r,D')$-witness $(\mc{U}_1,\mc{U}_2)$ such that every bag $B_t$ intersects at most $k$ elements of $\mc{U}_i$ for each $i \in [2]$. 
We choose such a witness with $|\mc{U}_1| + |\mc{U}_2|$ maximum.

Let $(X,d),Z,c_Z$ be as stated in the lemma.
Define $c_Z(x)=0$ for every $x \in X-Z$. 

We now construct a graph $G$ to which we will apply \cref{tw_basic}. 
Let $\mc{U}= \mc{U}_1 \cup \mc{U}_2$. 
Let $V(G) = \mc{U} \times \{0,1,2\}$ such that for any distinct vertices $(U,i),(U',j)$ of $G$, $(U,i)(U',j) \in E(G)$ if and only if there exist $x \in U $ and $x' \in U'$ such that $c_Z(x)=i,$ $c_Z(x')=j$ and $d(x,x') \leq r$.
For each $t \in V(T)$, let $B'_t = \{(U,i) \in V(G): U \in \mc{U}, U \cap B_t \neq \emptyset, i \in \{0,1,2\}\}$. 

\medskip

\noindent{\bf Claim 1:} $(T,\{B'_t: t \in V(T)\})$ is a tree-decomposition of $G$ of width at most $6k-1$. 

\noindent{\bf Proof of Claim 1:}
For every $t \in V(T)$, since every $B_t$ intersects at most $k$ elements of $\mc{U}_i$ for each $i \in [2]$, we know that $|B'_t| \leq 6k$.
For any $(U,i) \in V(G)$, since $U \cap B_t \neq \emptyset$ for some $t \in V(T)$, so $(U,i) \in B'_t$ for some $t \in V(T)$.
For any $(U,i)(U',j) \in E(G)$, there exist $x \in U$ and $x' \in U'$ such that $d(x,x') \leq r$, so there exists $t \in V(T)$ with $x,x' \in B_t$, and hence $B_t \cap U \neq \emptyset \neq B_t \cap U'$ and $(U,i),(U',j) \in B'_t$.
Therefore, it suffices to show that for every $v \in V(G)$, the set $\{t: v \in B'_t \}$ induces a subtree of $T$. 

Let $U \in \U$.
Let $B(U)=\{t: U \cap B_t \neq \emptyset\}$.
It suffices to show that the set $B(U)$ induces a subtree of $T$. 

Suppose to the contrary. 
Then there exist disjoint subsets $V_1$ and $V_2$ of $V(T)$ with $B(U) \subseteq V_1 \cup V_2$ such that no edge of $T$ joins $V_1$ to $V_2$, and $B(U)$ intersects both $V_1$ and $V_2$. 
For every $i \in [2]$, let $U_i = U \cap \bigcup_{t \in V_i}B_t$.
So $U_1$ and $U_2$ are non-empty, and $U=U_1 \cup U_2$.
As $(T,\{B_t: t \in V(T)\})$ is an $r$-compliant tree-decomposition of $(X,d)$, we have that $d(U_1,U_2) > r$. 
Thus we can replace $U$ by $\{U_1,U_2\}$ in $\mc{U}$ contradicting our initial choice of a witness.
This establishes the claim.
$\Box$

\medskip

Let $Z' = \mc{U} \times \{1,2\} \subseteq V(G)$.
For every $(U,i) \in Z'$, let $c_0((U,i))=i$. 
Since $c_Z^{-1}(\{i\})$ is $(r,0)$-controlled in $(X,d)$ for each $i$, we know that $c_0$ is a proper $2$-coloring of $G[Z']$. 
Thus \cref{tw_basic} implies that $c_0$ can be extended to a $2$-coloring $c_0$ of $G$ with weak diameter in $G$ at most $N$. 
Let $(Y,d_Y)$ be a metric space obtained from $G$ by rescaling metric by factor $r$; that is, $Y=V(G)$ and $d_Y(x,x')=r \cdot \dist_G(x,x')$ for all $x,y \in Y$. 
Then $G=G_r(Y,d_Y)$ and so $c_0$ is an $(r,Nr)$-controlled 2-coloring of $(Y,d_Y)$ by \cref{o:graphVSspaceColor}. 
	 
For each $x \in X$, let $U(x)$ be the member of $\mc{U}$ containing $x$.
Let $\phi(x)=(U(x),c_Z(x)) \in Y$ for every $x \in X$. 

\medskip

\noindent{\bf Claim 2:} $\phi : X \to Y$ is $(r/(D'+r),D',r)$-constrained. 

\noindent{\bf Proof of Claim 2:}
Let $x,x' \in X$.
Let $q = d_Y(\phi(x),\phi(x'))$.
So $\dist_G(\phi(x),\phi(x'))=q/r$.
Then there exists a path $Q_0Q_1...Q_{q/r}$ in $G$ such that $Q_0=\phi(x)$ and $Q_{q/r}=\phi(x')$.
For every $0 \leq \alpha \leq q/r$, denote $Q_\alpha$ by $(U_\alpha,i_\alpha)$.
For every $\alpha \in [q/r]$, since $Q_{\alpha-1}Q_\alpha \in E(G)$, there exists $b_{\alpha-1} \in U_{\alpha-1}$ and $a_\alpha \in U_\alpha$ such that $d(b_{\alpha-1},a_\alpha) \leq r$.
For every $0 \leq \alpha \leq q/r$, since $U_\alpha \in \U_1 \cup \U_2$, we have $\diam(U_\alpha) \leq D'$.
So $d(x,x') \leq D' \cdot (1+\frac{q}{r}) + r \cdot \frac{q}{r} = (D'+r)\frac{q}{r}+D'$.
Hence $d_Y(\phi(x),\phi(x')) = q \geq (d(x,x')-D') \cdot \frac{r}{D'+r} \geq \frac{r}{D'+r} \cdot d(x,x')-D'$.

If $d(x,x') \leq r$, then either $(U(x),c_Z(x)) = (U(x'),c_Z(x'))$ or $(U(x),c_Z(x))(U(x'),c_Z(x')) \in E(G)$, so $d_Y(\phi(x),\phi(x')) = r \cdot \dist_G(\phi(x),\phi(x')) \leq r$.
This shows that $\phi$ is $(r/(D'+r),D',r)$-constrained. 
$\Box$

\medskip

Thus $c_0 \circ \phi$ is an $(r,(Nr+D')(D'+r)/r)$-controlled 2-coloring of $(X,d)$ by \cref{c:constrainedColor}. 
Moreover, $c_0(\phi(z))= c_0(U(z),c_Z(z))=c_Z(z)$ for every $z \in Z$.
So $c_0 \circ \phi$ extends $c_Z|_Z$. 
This proves the lemma.
\end{proof}

Now we strengthen \cref{centered_bag_color_0} by allowing $c_Z$ to be $(r,D_0)$-controlled for any $D_0 \geq 0$.

\begin{lemma} \label{precolor_prepare}
For any real numbers $R \geq r > 0$ and $D_0 \geq 0$ and integer $k \in \bb{Z}_+$, there exists $D=D_{\ref{precolor_prepare}}(k,r,R,D_0)>0$ such that the following holds.
Let $(X,d)$ be a finite metric space that admits an $r$-compliant $(k,R)$-centered tree-decomposition.
Let $Z \subseteq X$ and $c_Z: Z \rightarrow [2]$.
If each $c_Z^{-1}(\{i\})$ is $(r,D_0)$-controlled in $(X,d)$, then $c_Z$ extends to an $(r,D)$-controlled $2$-coloring of $(X,d)$.
\end{lemma}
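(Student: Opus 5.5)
Our plan is to reduce \cref{precolor_prepare} to \cref{centered_bag_color_0} by contracting each piece of the given witness to a single point and passing to a quotient metric space. For each $i \in [2]$, fix an $(r,D_0)$-witness $\mc{W}_i$ for $c_Z^{-1}(\{i\})$. Let $J$ be the graph on $X$ in which $x,y$ are adjacent exactly when they lie in the same member of $\mc{W}_1 \cup \mc{W}_2$. Let $d'$ be the $(0,J)$-compression of $d$, with the caveat that a zero-length premetric value gives only a pseudometric. So we instead form the quotient space $X'$, in which each $W \in \mc{W}_1 \cup \mc{W}_2$ is collapsed to a single point, and equip it with the metric derived from the induced premetric. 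Let $\pi : X \to X'$ be the quotient map. Then $c_Z$ descends to $c'_Z$ on $\pi(Z)$, which is well defined because each $W$ is monochromatic.

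First, each $c'^{-1}_Z(\{i\})$ is $(r,0)$-controlled in $X'$: it is a set of points, and we must check that distinct points are more than $r$ apart. This is the delicate step. A path in the derived metric may hop through several collapsed classes, each hop costing a genuine $d$-distance. A short chain from $W$ to $W'$ of the same color could pass through classes of the other color, so pairwise $(r)$-separation of the $\mc{W}_i$ does not immediately give separation in $X'$. To handle this we instead use a $(r',J)$-compression with a small positive $r'$, say $r' = r$, scaling all parameters. Concretely, we apply \cref{centered_bag_color_0} with the parameter $r$ replaced by a smaller scale $\rho$, where collapsed pieces have diameter at most $\rho$ and hops cost at least their $d$-length. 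Alternatively, we first refine so that points of different pieces within distance $r$ are handled by \cref{centered_bag_color_0}'s $(r,0)$ hypothesis after passing to $\rho=r/3$. We expect this separation bookkeeping to be the main obstacle, and we would first try choosing the compression length $\rho$ tiny so that any path of $d'$-length at most $\rho$ between distinct same-colored classes must use a single original hop of length at most $r$, which is excluded.

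Second, we need a $\rho$-compliant $(k,R')$-centered tree-decomposition of $(X,d')$. We keep the same bags, but enlarge each bag $B_t$ by adding, for every piece $W$ meeting $B_t$, the whole of $W$; equivalently, we take the subtree union of the $W$-subtrees, which keeps the tree property since $W$ may be assumed connected in the $r$-compliant sense. This gives $\rho$-compliance for $d'$ because $d'(x,y) \le \rho$ forces $d(x,y) \le r$ up to pieces. Centeredness is preserved in $d'$ with $R' = R+2\rho$, since $d' \le d$. The number of centers may grow, and to bound it we would use that a ball of radius $R$ meets boundedly many pieces relevant at scale $r$; failing that, we would keep centers at $k$ by measuring the diameter in $d'$, where whole pieces have diameter at most $\rho$.

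Finally, \cref{centered_bag_color_0} yields an extension that is $(\rho,D')$-controlled in $(X,d')$. The identity map $(X,d)\to(X,d')$ is $(1,\,2D_0 D'/\rho,\,r)$-constrained, because a $d'$-path of length $L$ uses at most $L/\rho$ compressed edges, each of which costs at most $D_0$ in $d$. By \cref{c:constrainedColor}, after adjusting scales between $\rho$ and $r$, the result is $(r,D)$-controlled in $(X,d)$ with $D$ depending only on $k,r,R,D_0$.
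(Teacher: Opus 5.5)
There is a genuine gap, and it sits at the step you yourself flag as the main obstacle. None of the fallbacks you list closes it.

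\cref{centered_bag_color_0} requires each precolored class to be $(\rho,0)$-controlled, i.e.\ a set of \emph{points} pairwise more than $\rho$ apart. So each piece $W$ really has to become a single point. You correctly see that the plain quotient destroys separation: a chain from $W$ to $W'$ through a collapsed piece of the other color can be arbitrarily short.

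Your remedy, the $(\rho,J)$-compression, does keep distinct same-colored pieces more than $\rho$ apart for every $\rho\le r$. Indeed, a $d'$-path of length at most $\rho$ between different pieces cannot use a capped hop, so its $d$-length is at most $\rho\le r$. But the compression leaves each piece with positive $d'$-diameter, and points of a piece can be arbitrarily close. Hence the hypothesis of \cref{centered_bag_color_0} fails at every scale. Collapsing the compressed pieces afterwards just gives back the plain quotient and its failure.

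Taking $\rho$ tiny is also counterproductive. At scale $\rho$ the lemma only yields parts that are $\rho$-separated in $d'$. Since $d'\le d$, there is no way to ``adjust scales'' afterwards to recover $r$-separation in $d$ without merging parts along possibly unbounded chains, so the lemma must be applied at scale $r$. Separately, your $(1,2D_0D'/\rho,r)$ bound for the identity is not valid globally: along a long chain of pieces the distortion is multiplicative, $d\le(1+D_0/\rho)\,d'$.

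The missing idea is to collapse each piece to a point \emph{and} charge every hop at least $r$. On $X'=(X-Z)\cup\mc{U}$, take the metric derived from the premetric $\ell(x,y)=\max\{r,d(x,y)\}$ for distinct $x,y$, where $d$ is the set distance for collapsed pieces. Let $\phi$ send each $z\in Z$ to its piece and fix $X-Z$. This gives the following:
\begin{itemize}
\item A $d'$-path of length at most $r$ is a single hop with $d\le r$. This gives at once the $(r,0)$-control of the collapsed color classes and the $r$-compliance of the bags $B'_t=(B_t\cap(X-Z))\cup\{U\in\mc{U}:U\cap B_t\neq\emptyset\}$.
\item A $d'$-path of length $L$ has at most $L/r$ hops, and each intermediate vertex has $d$-diameter at most $D_0$. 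So $\phi$ is $(r/(D_0+r),D_0,r)$-constrained, and \cref{c:constrainedColor} pulls the coloring back.
\item Centeredness survives because $d'(\phi(x),\phi(y))\le\max\{r,d(x,y)\}\le R$ within a center.
\end{itemize}
Your treatment of the bags is the right ingredient for the remaining subtree check. That is, refine the pieces (into $r$-components, or by choosing a witness with the maximum number of parts) so that the bags meeting each piece form a subtree.
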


\begin{proof}
Let $D'= D_{\ref{centered_bag_color_0}}(k,r,R)$.
Define $D= (D'+ D_0)(D_0+r)/r$.

Let $(X,d),Z,c_Z$ be as stated in the lemma.
For $i \in [2]$, let $\U_i$ be a set witnessing that $c_Z^{-1}(\{i\})$ is $(r,D_0)$-controlled in $(X,d)$ such that $|\U_i|$ is maximum.
Let $\mc{U}= \mc{U}_1 \cup \mc{U}_2$. 

Let $X' = (X-Z) \cup \mc{U}$.
Let $\ell$ be a premetric on $X'$ such that $\ell(x,y) = \max\{r,d(x,y)\}$ for any distinct $x,y \in X'$.
Let $d'=d_{\ell}$ be the metric derived from $\ell$.  
For every $x \in X-Z$, let $\phi(x)=x$; for every $z \in Z$, there exists $U_z \in \U$ with $z \in U_z$, and we let $\phi(z)=U_z$.
So $\phi: X \to X'$ is a function.

\medskip

\noindent{\bf Claim 1:} $\phi$ is an $(r/(D_0+r),D_0,r)$-constrained map from $(X,d)$ to $(X',d')$. 

\noindent{\bf Proof of Claim 1:}
Let $x,x' \in X$.
Let $y_0,y_1,...,y_t$ be the sequence over $X'$ for some nonnegative integer $t$ such that $y_0=\phi(x)$, $y_t=\phi(x')$, and $d'(\phi(x),\phi(x'))=\sum_{i=0}^{t-1}\ell(y_i,y_{i+1})$.
Note that $d'(\phi(x),\phi(x'))=\sum_{i=0}^{t-1}\ell(y_i,y_{i+1}) \geq \max\{rt,\sum_{i=0}^{t-1}d(y_i,y_{i+1})\}$.
In particular, $t \leq \frac{d'(\phi(x),\phi(x'))}{r}$.
Since each $\U_i$ is $(r,D_0)$-controlled, 
	\begin{align*}
		d(x,x') & \leq (t+1)D_0 + \sum_{i=0}^{t-1}d(y_i,y_{i+1}) \\
		& \leq (\frac{d'(\phi(x),\phi(x'))}{r}+1)D_0+d'(\phi(x),\phi(x')) \\
		& = \frac{D_0+r}{r} d'(\phi(x),\phi(x'))+D_0.
	\end{align*}
So $d'(\phi(x),\phi(x')) \geq \frac{r}{D_0+r}(d(x,x')-D_0) \geq \frac{r}{D_0+r}d(x,x')-D_0$.

Note that $d'(\phi(x),\phi(x')) \leq \ell(\phi(x),\phi(x')) = \max\{r, d(\phi(x),\phi(x'))\} \leq \max\{r, d(x,x')\}$.
Hence, if $d(x,x') \leq r$, then $d'(\phi(x),\phi(x')) \leq r$.
$\Box$

\medskip

\noindent{\bf Claim 2:} If $x,x'$ are distinct elements of $X'$ with $d'(x,x') \leq r$, then $d(x,x') \leq r$.

\noindent{\bf Proof of Claim 2:}
Since $d'(x,x') \leq r$, there exist distinct $y_0,y_1,...,y_w \in X'$ for some positive integer $w$ such that $y_0=x$, $y_w=x'$ and $r \geq d'(x,x')=\sum_{i=0}^{w-1}\ell(y_i,y_{i+1}) = \sum_{i=0}^{w-1}\max\{r,d(y_i,y_{i+1})\} \geq wr$.
So $w=1$ and $\max\{r,d(y_0,y_{1})\}=r$.
Hence $r \geq d(y_0,y_1) = d(y_0,y_w)=d(x,x')$.
$\Box$

\medskip

Let $(T,\{B_t: t \in V(T)\})$ be an $r$-compliant $(k,R)$-centered tree-decomposition of $(X,d)$. 
For each $t \in V(T)$, let $B'_t=(B_t \cap (X-Z)) \cup \{U \in \mc{U}: U \cap B_t \neq \emptyset\}$. 

\medskip

\noindent{\bf Claim 3:} $(T,\{B'_t: t \in V(T)\})$ is an $r$-compliant $(k,R)$-centered tree-decomposition of $(X',d')$.

\noindent{\bf Proof of Claim 3:}
Since $(T,\{B_t: t \in V(T)\})$ is a tree-decomposition of $(X,d)$, it is clear that every point of $X'$ is in some bag of $(T,\{B'_t: t \in V(T)\})$.
For any distinct $x,x' \in X'$, since $d'(x,x') \leq \ell(x,x') = \max\{r,d(x,x')\} \leq \max\{R,d(x,x')\}$ and $B_t$ is $(k,R)$-centered in $(X,d)$ for every $t \in V(T)$, we know that $B'_t$ is $(k,R)$-centered in $(X',d')$ for every $t \in V(T)$.

Let $x,x' \in X'$ with $d'(x,x') \leq r$ and $x \neq x'$.
Then Claim 2 implies that $d(x,x') \leq r$.
Since $(T,\{B_t: t \in V(T)\})$ is an $r$-compliant tree-decomposition of $(X,d)$, there exists $t_0 \in V(T)$ such that $B_{t_0}$ intersects both $x$ and $x'$, and hence $B_{t_0}'$ contains both $x$ and $x'$.

Let $y \in X'$.
To prove this claim, it suffices to show $\{t \in V(T): y \in B'_t\}$ induces a subtree of $T$.
We may assume $y \in \U$ since $(T,\{B_t: t \in V(T)\})$ is a tree-decomposition of $(X,d)$.
So it suffices to show that the set $\{t \in V(T): y \cap B_t \neq \emptyset\}$, denoted by $B(y)$, induces a subtree of $T$.

Suppose to the contrary. 
Then there exist disjoint subsets $V_1$ and $V_2$ of $V(T)$ with $B(y) \subseteq V_1 \cup V_2$ such that no edge of $T$ joins $V_1$ to $V_2$, and $B(y)$ intersects both $V_1$ and $V_2$. 
For every $i \in [2]$, let $U_i = y \cap \bigcup_{t \in V_i}B_t$.
So $U_1$ and $U_2$ are non-empty, and $y=U_1 \cup U_2$.
As $(T,\{B_t: t \in V(T)\})$ is an $r$-compliant tree-decomposition of $(X,d)$, we have that $d(U_1,U_2) > r$. 
Thus we can replace $y$ by $\{U_1,U_2\}$ in $\mc{U}$, contradicting our initial choice of a witness.
This establishes the claim.
$\Box$

\medskip
	
For every $U \in \U$, let $c_0(U)$ be the unique $i \in [2]$ with $U \in \mc{U}_i$.
Then $c_0$ is a 2-coloring of $\U$ such that each $c_0^{-1}(\{i\})$ is $(r,0)$-controlled in $(X',d')$ by Claim 2. 
By Claim 3 and \cref{centered_bag_color_0}, $c_0$ can be extended to an $(r, D')$-controlled $2$-coloring $c'$ of $(X',d')$. 
As $\phi$ is $(r/(D_0+r),D_0,r)$-constrained by Claim 1, \cref{c:constrainedColor} implies that $c' \circ \phi$ is an $(r, (D'+ D_0)(D_0+r)/r)$-controlled 2-coloring of $(X,d)$.
Note that for every $z \in Z$, we know $c' \circ \phi(z) = c'(U_z)=c_0(U_z)=c_Z(z)$. 
Thus the lemma holds.
\end{proof}

\begin{lemma} \label{precolorSpace}
For any positive integers $k$ and real numbers $r,D_0,R > 0$ with $R \geq r$, there exists $D=D_{\ref{precolorSpace}}(k,r,D_0,R) > 0$ such that the following holds.
Let $(X,d)$ be a finite metric space. 
Let $Y \subseteq X$ be $(3r,D_0)$-controlled in $(X,d)$ such that the metric space $(X-Y,d|_{(X-Y) \times (X-Y)})$ admits an $r$-compliant $(k,R)$-centered tree-decomposition. 
Then there exists an $(r,D)$-controlled $2$-coloring $c: X \to [2]$ of $(X,d)$ such that $Y \subseteq c^{-1}(\{1\})$.  
\end{lemma}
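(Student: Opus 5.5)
The idea is to reduce to \cref{precolor_prepare} applied to $X-Y$. The points of $X-Y$ that lie close to $Y$ get precolored $2$, so that color $2$ separates $Y$ from the rest of color $1$.

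Let $Y' = \{x \in X-Y : d(x,Y) \leq r\}$ be the $r$-neighbourhood of $Y$ inside $X-Y$. Let $\mc{W}$ be a $(3r,D_0)$-witness for $Y$. For each $W \in \mc{W}$ put $W' = \{x \in Y' : d(x,W) \leq r\}$. Since distinct members of $\mc{W}$ are more than $3r$ apart, the sets $W'$ are disjoint and cover $Y'$. For distinct $W_1,W_2$ we get $d(W_1',W_2') > 3r - 2r = r$, and each $\diam(W') \leq D_0 + 2r$. Hence $Y'$ is $(r, D_0+2r)$-controlled in $(X-Y,d)$.

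Next, apply \cref{precolor_prepare} to the finite metric space $X-Y$, which admits an $r$-compliant $(k,R)$-centered tree-decomposition. Take $Z = Y'$ and $c_Z \equiv 2$. Then $c_Z^{-1}(\{2\}) = Y'$ is $(r,D_0+2r)$-controlled and $c_Z^{-1}(\{1\}) = \emptyset$. This gives an $(r,D')$-controlled $2$-coloring $c'$ of $X-Y$ extending $c_Z$, where $D' = D_{\ref{precolor_prepare}}(k,r,R,D_0+2r)$. Define $c$ on $X$ by $c = 1$ on $Y$ and $c = c'$ on $X-Y$.

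It remains to verify that $c$ is $(r,D)$-controlled. For color $2$, the class is $c'^{-1}(\{2\}) \subseteq X-Y$ with its $(r,D')$-witness, and distances are unchanged because $d$ on $X-Y$ is the restriction. For color $1$, let $\mc{V}$ be the $(r,D')$-witness of $c'^{-1}(\{1\})$ and take the partition $\mc{V} \cup \mc{W}$ of $c^{-1}(\{1\})$. Every point of $c'^{-1}(\{1\})$ lies outside $Y'$, so it is at distance more than $r$ from $Y$, and in particular from every member of $\mc{W}$. Distinct members of $\mc{W}$ are more than $3r > r$ apart, and distinct members of $\mc{V}$ are more than $r$ apart. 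So $\mc{V} \cup \mc{W}$ is an $(r,\max\{D',D_0\})$-witness, and we may take $D = \max\{D', D_0\}$.

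The main subtlety is the first step: the $3r$ separation of $Y$ is exactly what makes the precolored buffer $Y'$ $r$-separated and of bounded diameter. With that in place, \cref{precolor_prepare} does all the work.
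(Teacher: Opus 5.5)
Your proposal is correct and matches the paper's proof essentially step for step. Your buffer sets $W'$ are the paper's sets $N(U)$, which are likewise precolored $2$ and extended to $X-Y$ via \cref{precolor_prepare}; you then use the same witness $\mc{W}\cup\mc{V}$ for color $1$ and the same constant $D=\max\{D_{\ref{precolor_prepare}}(k,r,R,D_0+2r),D_0\}$.
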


\begin{proof} 
Let $D=\max\{D_0, D_{ \ref{precolor_prepare}}(k,r,R,D_0+2r) \}$.

Let $\mc{U}$ be a $(3r,D_0)$-witness for $Y$. 
For each $U \in \mc{U}$, let $N(U)= \{x \in X-Y: d(x,U) \leq r\}$. 
Let $\mc{N}= \{N(U): U \in \mc{U}\}$. 
Then $\diam(N) \leq D_0+2r$ for each $N \in \mc{N}$, and $d(N,N') > r$ for all distinct $N,N' \in \mc{N}$.  
	
Let $Z = \bigcup_{N \in \mc{N}}N$, and let $c_Z(z)=2$ for every $z \in Z$. 
Then $Z \subseteq X-Y$ and $c_Z: Z \rightarrow [2]$ is a function such that every $c_Z^{-1}(\{i\})$ is $(r,D_0+2r)$-controlled in $(X-Y,d|_{(X-Y) \times (X-Y)})$. 
Since $(X-Y,d|_{(X-Y) \times (X-Y)})$ admits an $r$-compliant $(k,R)$-centered tree-decomposition, \cref{precolor_prepare} implies that the coloring $c_Z$ extends to an $(r,D)$-controlled 2-coloring $c_0: X-Y \to [2]$ of $(X-Y,d|_{(X-Y) \times (X-Y)})$. 
We further extend $c_0$ to a 2-coloring $c$ of $(X,d)$ by setting $c(y)=1$ for every $y \in Y$. 

To prove this lemma, it suffices to show that $c$ is an $(r,D)$-controlled $2$-coloring of $(X,d)$.
For $i \in [2]$, let $\U_i$ be an $(r,D)$-witness for $c_0^{-1}(\{i\})$ in $(X-Y,d|_{(X-Y) \times (X-Y)})$.
Since $c^{-1}(\{2\}) \subseteq X-Y$, it is clear that $\U_2$ is an $(r,D)$-witness for $c^{-1}(\{2\})$.
Hence it suffices to show that $\mc{U} \cup \mc{U}_1$ is an $(r,D)$-witness for $c^{-1}(\{1\})$ in $(X,d)$.

It is clear that every member of $\U \cup \U_1$ has diameter in $(X,d)$ at most $D$.
Moreover, $d(U,U')>3r>r$ for any $U,U' \in \U$ since $\U$ is a $(3r,D_0)$-witness for $Y$; $d(U,U')>r$ for any $U,U' \in \U_1$ since $\U_1$ is an $(r,D)$-witness for $c_0^{-1}(\{1\})$.
If $U \in \U$ and $U' \in \U_1$, then since $U' \subseteq c^{-1}(\{1\})$ and $c(z)=c_Z(z)=2$ for every $z \in Z \supseteq N(U)$, we know that $U' \cap N(U)=\emptyset$ by the definition of $c_Z$, so $d(U',U)>r$.
This shows that $\mc{U} \cup \mc{U}_1$ is an $(r,D)$-witness for $c^{-1}(\{1\})$.
\end{proof}

Now we prove the main result of this section.

\begin{lemma} \label{colorSpace}
For any positive integers $k, s$ and any real numbers $R \geq r > 0$, there exists $D=D_{\ref{colorSpace}}(k,r,R,s) > 0$ such that if $(X,d)$ is a finite metric space that has a partition $(X_1,\ldots, X_s)$ such that $(X_i,d|_{X_i \times X_i})$ admits an $3^{s-1}r$-compliant $(k,R)$-centered tree-decomposition for every $i \in [s]$, then there exists an $(r,D)$-controlled $(s+1)$-coloring of $(X,d)$.
\end{lemma}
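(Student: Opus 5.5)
We argue by induction on $s$, with the key step being \cref{precolorSpace}. For $s=1$, $X=X_1$ admits an $r$-compliant $(k,R)$-centered tree-decomposition. Applying \cref{precolorSpace} with $Y=\emptyset$ (trivially $(3r,0)$-controlled) gives an $(r,D)$-controlled $2$-coloring. Alternatively, \cref{center_bag_metric} applies directly. So we may set $D_{\ref{colorSpace}}(k,r,R,1)=D_{\ref{precolorSpace}}(k,r,1,R)$.

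For $s\geq 2$, let $X'=X_1\cup\cdots\cup X_{s-1}$. Each $X_i$ with $i\le s-1$ admits a $3^{s-1}r=3^{s-2}(3r)$-compliant $(k,R)$-centered tree-decomposition. We apply the induction hypothesis with $r$ replaced by $3r$ and $R$ replaced by $\max\{R,3r\}$; a $(k,R)$-centered decomposition is also $(k,R')$-centered for $R'\ge R$. This yields a $(3r,D_1)$-controlled $s$-coloring $c'$ of $(X',d|_{X'\times X'})$, where $D_1=D_{\ref{colorSpace}}(k,3r,\max\{R,3r\},s-1)$. Any $(3r,D_1)$-witness in $X'$ is also one in $X$, because distances are inherited. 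So $Y:=c'^{-1}(\{s\})$ is a $(3r,D_1)$-controlled subset of $X$, and each other color class $c'^{-1}(\{i\})$, $i\le s-1$, is $(r,D_1)$-controlled in $X$.

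Next we handle $X_s$ together with the last class $Y$. Consider the finite metric space $X''=Y\cup X_s$ with the restricted metric. Then $Y$ is $(3r,D_1)$-controlled in $X''$. Moreover $X''-Y=X_s$ admits a $3^{s-1}r$-compliant, hence $r$-compliant, $(k,R)$-centered tree-decomposition. By \cref{precolorSpace} there is an $(r,D_2)$-controlled $2$-coloring $c''$ of $X''$ with $Y\subseteq c''^{-1}(\{1\})$, where $D_2=D_{\ref{precolorSpace}}(k,r,D_1,R)$. Define $c$ on $X$ by $c=c'$ on $X'-Y$ (colors $1,\dots,s-1$), $c(x)=s$ if $c''(x)=1$, and $c(x)=s+1$ if $c''(x)=2$. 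The points of $Y$ receive color $s$, since $c''\equiv 1$ on $Y$.

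Finally we check controlledness, with $D=\max\{D_1,D_2\}$. The classes $1,\dots,s-1$ are restrictions of $c'$-classes and are $(r,D_1)$-controlled. The classes $s$ and $s+1$ are exactly the $c''$-classes in $X''$, which are $(r,D_2)$-controlled; their witnesses remain valid in $X$ because $d$ is simply restricted.

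The main obstacle is the bookkeeping of the compliance parameter. The induction must be applied at scale $3r$ so that the final class $Y$ is $(3r,\cdot)$-controlled as \cref{precolorSpace} requires. This is exactly why the hypothesis needs $3^{s-1}r$-compliance: after $s-1$ inductive steps the scale is still at most $3^{s-1}r$, and compliance at a larger scale implies compliance at smaller scales. The dependence of $D$ only on $(k,r,R,s)$ then follows by defining $D$ recursively.
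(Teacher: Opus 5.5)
Your proof is correct and follows the paper's argument. You induct on $s$, apply the hypothesis to $X_1\cup\dots\cup X_{s-1}$ at scale $3r$, and then use \cref{precolorSpace} on $Y\cup X_s$ to turn one $(3r,\cdot)$-controlled color class together with $X_s$ into two $(r,\cdot)$-controlled classes. The differences are cosmetic: you reuse class $s$ rather than class $1$, and you take $\max\{R,3r\}$ instead of $3R$. Your explicit recursion $D=\max\{D_1,D_2\}$, with the scale parameters passed correctly, is actually tidier than the paper's recursive definition of $D_i$.
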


\begin{proof}
Let $k,s$ be positive integers and let $r,R$ be positive real numbers.
For any positive real numbers $x,y$ with $y \geq x$, 
	\begin{itemize}
		\item let $D_1(k,x,y) = D_{\ref{center_bag_metric}}(k,x,y)$, and
		\item for every $i \geq 1$, let $D_{i+1}(k,x,y)=D_{\ref{precolorSpace}}(k,r,D_i(k,3x,3y),R)$.
	\end{itemize}
Define $D=D_s(k,r,R)$.

We prove the theorem by induction on $s$. 
The base case $s=1$ is a consequence of \cref{center_bag_metric}.

So we may assume $s \geq 2$.
Note that every $(k,R)$-controlled tree-decomposition is also $(k,3R)$-controlled.
So $(X_1,\ldots,X_{s-1})$ is a partition of $(X-X_s,d|_{(X-X_s) \times (X-X_s)})$ such that for every $i \in [s-1]$, $(X_i,d|_{X_i \times X_i})$ admits a $3^{s-1}r$-compliant (and hence $(3^{s-2} \cdot 3r)$-compliant) $(k,3R)$-controlled tree-decomposition.
We apply the inductive hypothesis to $(X-X_s,d|_{(X-X_s) \times (X-X_s)})$ with $3r$ instead of $r$. 
Then there exists a $(3r, D_{s-1}(k,3r,3R))$-controlled $s$-coloring $c_1:X -X_s \to [s]$ of $(X-X_s,d|_{(X-X_s) \times (X-X_s)})$. 

Let $D_0=D_{s-1}(k,3r,3R)$.
Let $Y= c_1^{-1}(\{1\})$. 
Then $Y$ is $(3r, D_0)$-controlled in both $(X-X_s,d|_{(X-X_s) \times (X-X_s)})$ and $(X,d)$. 
So $Y$ is $(3r,D_0)$-controlled in $(X_s \cup Y, d|_{(X_s \cup Y) \times (X_s \cup Y)})$.
Since $(X_s,d|_{X_s \times X_s})$ admits a $3^{s-1}r$-compliant (and hence $r$-compliant) $(k,R)$-centered tree-decomposition, \cref{precolorSpace} implies that there exists an $(r,D)$-controlled $2$-coloring $c_s: X_s \cup Y \to \{1,s+1\}$ of $(X_s \cup Y, d|_{(X_s \cup Y) \times (X_s \cup Y)})$ such that $Y \subseteq c_s^{-1}(\{1\})$.  
As $c_1$ agrees with $c_s$ on $Y$, putting $c_1$ and $c_s$ together gives an $(r,D)$-controlled $(s+1)$-coloring of $(X,d)$. 
\end{proof}

\section{Asymptotic dimension of space-filling families} \label{sec:asdim_space_filling}

For a family $\Se$ and a real number $b>0$, we say that a set $S$ is a \emph{$b$-shallow union} of elements of $\mc{S}$ if there exists $\mc{S}'\subseteq \mc{S}$ such that $S=\bigcup_{X \in \Se'}X$ and $I(\mc{S}')$ has radius at most $b$.

\begin{lemma}[{{\cite[Lemma 6]{DvoNor25}}}] \label{lem-cons}
For a function $f:\mathbb{R}_+\to\mathbb{Z}_+$, let $\mc{S}$ be an $f$-space-filling family of subsets of a metric space $(X,d)$.
Let $b >0$, and let $h(x)=f((2b+2)(x+1))$ for every $x>0$.
If $\mc{S}'$ is a family of $b$-shallow unions of elements of $\mc{S}$, then $\mc{S}'$ is $h$-space-filling.
\end{lemma}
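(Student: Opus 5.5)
We need to show: for any $x\in X$, $r,s>0$, the ball $B(x,r)$ meets at most $h(r/s)=f((2b+2)(r/s+1))$ pairwise disjoint members of $\mc{S}'$ of diameter at least $s$. We also need each member of $\mc{S}'$ to have finite diameter. The plan is to assign to each such union a single member of $\mc{S}$ that is large, lies close to $x$, and is distinct for distinct unions, and then apply the $f$-space-filling condition to $\mc{S}$ at a suitably enlarged radius and reduced diameter threshold.

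First, fix $S'\in\mc{S}'$ with $S'=\bigcup_{Y\in\mc{S}''}Y$, where $I(\mc{S}'')$ has radius at most $b$. Pick a center $Y_0$. Every $Y\in\mc{S}''$ is joined to $Y_0$ by a path of at most $b$ intersection steps. Let $m$ be the maximum diameter of a member of $\mc{S}''$. Then any two points of $S'$ are joined by a chain through at most $2b+1$ members, so
\[\diam(S')\le (2b+1)m.\]
If $\mc{S}''$ is finite, or the supremum defining $m$ is finite, this shows $\diam(S')$ is finite. This point may need care for infinite $\mc{S}''$; I would argue that $m<\infty$ follows from space-filling, since infinitely many members meeting a bounded region at growing scales is controlled by $f$. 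Consequently, if $\diam(S')\ge s$, some member $Y_{S'}\in\mc{S}''$ has $\diam(Y_{S'})\ge s/(2b+2)$; the slack between $2b+1$ and $2b+2$ absorbs the non-attained supremum.

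Second, suppose $S'$ meets $B(x,r)$ at a point $p\in Z$ for some $Z\in\mc{S}''$. The path from $Z$ to $Y_{S'}$ through $Y_0$ has at most $2b+1$ members, each of diameter at most $m$. I want $Y_{S'}$ within distance about $(2b+2)(r+s)$ of $x$. The difficulty is that $m$ could be much larger than $s$, so the bound via $m$ fails. To fix this, I would choose $Y_{S'}$ more cleverly: take the member of $\mc{S}''$ with diameter at least $s/(2b+2)$ that is closest in $I(\mc{S}'')$ to $Z$. Every member strictly before it on the path has diameter less than $s/(2b+2)$, so
\[d(x,Y_{S'})\le r+2b\cdot s/(2b+2)\le r+s.\]
If no member along the path from $Z$ through $Y_0$ to every vertex is large, then $\diam(S')<s$, a contradiction. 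A large member exists somewhere, and it is reached from $Z$ within $2b$ steps through small members, which gives the bound.

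Third, disjoint unions yield distinct, pairwise disjoint chosen members, since $Y_{S'}\subseteq S'$. Hence the count is at most the number of pairwise disjoint members of $\mc{S}$ of diameter at least $s/(2b+2)$ meeting $B(x,r+s)$. By the space-filling property this is at most
\[f\!\left(\frac{(r+s)(2b+2)}{s}\right)=f\big((2b+2)(r/s+1)\big)=h(r/s).\]
The main obstacle is the choice of $Y_{S'}$ in the second step, balancing size against proximity, together with the finite-diameter technicality.
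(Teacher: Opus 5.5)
The paper does not prove this lemma; it imports it from Dvo\v{r}\'{a}k and Norin, so there is no in-paper argument to compare against. Your counting argument is correct. For each $S'$ you choose $Y_{S'}$ to be a member of diameter at least $s/(2b+2)$ nearest to $Z$ in $I(\mathcal{S}'')$. You then chain through points $q_i\in W_{i-1}\cap W_i$ of the at most $2b$ small members before it. This produces an actual point of $Y_{S'}$ within distance $r+2b\cdot s/(2b+2)<r+s$ of $x$. Since $Y_{S'}\subseteq S'$, disjointness of the $S'$ passes to the $Y_{S'}$, and applying the space-filling property of $\mathcal{S}$ at radius $r+s$ and threshold $s/(2b+2)$ gives exactly $h(r/s)$. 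This part never needs $m<\infty$. If $\diam(S')\ge s$ then $\sup_{Y\in\mathcal{S}''}\diam(Y)\ge s/(2b+1)>s/(2b+2)$, and this holds even when the supremum is infinite.

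The step that fails is your justification of the finite-diameter clause for infinite $\mathcal{S}''$. Space-filling only restricts pairwise \emph{disjoint} members, so it cannot force $m<\infty$. For example, in $\mathbb{R}$ the family $\{[0,n]: n\in\mathbb{Z}_+\}$ is $f$-space-filling with $f\equiv 1$, since all members contain $0$. Its intersection graph is complete and so has radius $1$. Yet its union $[0,\infty)$ is a $1$-shallow union of infinite diameter. So, with the paper's definition of space-filling (which requires finite diameters), the statement is actually false for infinite shallow unions, and no argument can close this step. The right fix is to assume each shallow union uses only finitely many members of $\mathcal{S}$. This is the only case the paper needs, since the lemma is applied in \cref{finite_space_filling_coloring} to a finite $\mathcal{S}$. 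Under that assumption, $\diam(S')\le(2b+1)\max_{Y\in\mathcal{S}''}\diam(Y)<\infty$ follows immediately from your first paragraph.
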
 

The following is a simple combination of ~\cite[Lemma 7]{DvoNor25} and ~\cite[Lemma 8]{DvoNor25}, and we include a proof for completeness, though we omit the formal definition of several terminologies in \cite{DvoNor25}.

\begin{lemma} \label{l:dn1}
For any function $f:\mathbb{R}_+\to\mathbb{Z}_+$ and dilation $g:\mathbb{R}_+\to\mathbb{R}_+$, there exists a positive integer $k = k_{\ref{l:dn1}}(f,g)$ such that the following holds. 
Let $n$ be a nonnegative integer, and let $(X,d)$ be a metric space of Assouad-Nagata dimension at most $n$ such that $g$ is an $n$-dimensional control function.
Let $\mc{S}$ be a finite $f$-space-filling family of subsets of $X$, and let $G$ be the intersection graph of $\mc{S}$. 
Then there exists a partition $(\mc{S}_1,\ldots,\mc{S}_{n+1})$ of $\mc{S}$ such that for every $i \in [n+1]$, $G[\mc{S}_i]$ admits a $(k,2)$-centered tree-decomposition. 
\end{lemma}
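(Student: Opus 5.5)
We will sort the members of $\mc{S}$ by scale and use the Assouad--Nagata covers at every scale. The key point is that the intersection graph of $\mc{S}$ has bounded degree into ``large'' sets. Fix a large constant $\lambda>1$, depending only on $f$ and $g$; write $g(x)=cx$. For each $S\in\mc{S}$ pick a point $p_S\in S$. We assign $S$ to the scale $j\in\mathbb{Z}$ with $\lambda^{j}\le \diam(S)<\lambda^{j+1}$. Members of diameter $0$ are points; we treat them as the lowest scale. Since $\mc{S}$ is finite, only finitely many scales occur.

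At each scale $j$, apply the $n$-dimensional control function $g$ with disjointness parameter $r_j=\lambda^{j+1}$. This gives a cover $\bigcup_{i=1}^{n+1}\U^j_i$ of $X$ with each $\U^j_i$ being $r_j$-disjoint and every member of diameter at most $c\lambda^{j+1}$. Assign each $S$ of scale $j$ to a colour $i$ and a member $U\in\U^j_i$ containing $p_S$. Let $\mc{S}_i$ be the members assigned colour $i$. Two members of $\mc{S}_i$ of the same scale whose points lie in different members of $\U^j_i$ cannot intersect, because they have diameter $<\lambda^{j+1}$ and the members are $\lambda^{j+1}$-disjoint. Hence at each scale, $G[\mc{S}_i]$ restricted to that scale splits into ``clusters'', one per $U$, each contained in a ball of radius $O(c\lambda^{j+1})$.

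Next we build the tree-decomposition of $G[\mc{S}_i]$ from the hierarchy of clusters. Make cluster $C$ at scale $j$ a child of a cluster $C'$ at a larger scale if some member of $C$ meets some member of $C'$. If a cluster meets several higher clusters, this naive rule may fail to give a tree. Our remedy is to take as the bag of a cluster the union of the cluster itself and all members at larger scales that meet it. Then we must verify two things. The first is that the clusters form a forest whose ancestry matches the bag containment, so that the subtree property holds. The second is that each bag is a union of boundedly many sets of diameter $\le 2$ in $G$.

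The boundedness comes from space-filling. Members meeting a ball of radius $O(c\lambda^{j+1})$ with diameter $\ge \lambda^{j}$ include at most $f(O(c\lambda))$ pairwise disjoint ones. By greedily choosing a maximal disjoint subfamily, every such member intersects one of boundedly many members. So each cluster, and each higher-scale family meeting the cluster's region, is covered by boundedly many closed neighbourhoods $N_G[S]$, which have diameter $\le 2$. This yields the $(k,2)$-centered property with $k$ depending on $f,c,\lambda$.

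The main obstacle is making the tree structure genuinely consistent. A member of scale $j'$ may meet clusters at scale $j$ that descend from different clusters at intermediate scales. To handle this, I would follow \cite[Lemma 7, Lemma 8]{DvoNor25}: first coarsen by processing scales from top to bottom, and merge a lower cluster into the region of the unique higher cluster via the nearest-point structure given by $r_j$-disjointness. That is, the clusters of scale $j$ inside a $\lambda^{j+2}$-neighbourhood of a higher $U$ are attached to it, and $\lambda$ is chosen large enough that $r$-disjointness at the higher scale makes this attachment unique. This nesting argument is where I expect the real work to lie. After the nesting is established, the bounded-width count above completes the proof.
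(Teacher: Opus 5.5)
As a self-contained argument, your proposal has a genuine gap at exactly the step you postpone. Nothing in it produces, for a fixed colour $i$, a system of regions that is laminar across scales, and without that you have neither a tree nor the subtree axiom. The covers $\U^j_i$ are chosen independently at each scale, and the attachment rule you sketch does not repair this.

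\begin{itemize}
\item \emph{Uniqueness fails.} With your parameters, distinct members of $\U^{j+1}_i$ are only guaranteed to be $\lambda^{j+2}$ apart. So a scale-$j$ cluster can lie in the $\lambda^{j+2}$-neighbourhoods of two of them. Enlarging $\lambda$ does not help, because both quantities scale alike.
\item \emph{Levels get skipped.} A cluster may have no colour-$i$ set near it at scale $j+1$ (that area is covered by the other $n$ colours there) while still being met by colour-$i$ members of much higher scales. Attachments must then skip levels, and uniqueness at one scale says nothing about whether such chains are consistent.
\item \emph{The bags break the subtree axiom.} Your bag (the cluster plus all higher-scale members meeting it) fails even over a genuine tree of clusters. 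A member $S'$ of a cluster $K'$ can meet a member of a cluster $K$ below $K'$ without meeting any member of an intermediate cluster $K_1$ on the tree path, since $K_1$'s members may sit elsewhere in its region. The nodes whose bags contain $S'$ are then disconnected.
\item \emph{Two smaller slips.} To force intersecting members of diameter $<\lambda^{j+1}$ into the same member of $\U^j_i$ you need $2\lambda^{j+1}$-disjointness, not $\lambda^{j+1}$. Also, diameter-$0$ members cannot be lumped into a lowest-scale cluster. Space-filling says nothing about them, so arbitrarily many pairwise disjoint points can occur in a small ball; each point needs its own leaf node, whose bag is then a clique.
\end{itemize}

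What is missing is a genuinely nested family of regions for each colour, i.e.\ the laminar web of \cite[Lemma 7]{DvoNor25}. One way to get it along your lines is as follows.
\begin{itemize}
\item Take $\U^j_i$ to be $3\lambda^{j+1}$-disjoint, and let $R(U)$ be the closed $\lambda^{j+1}$-neighbourhood of $U$, so it contains every member assigned to $U$.
\item Process the scales bottom-up, replacing $R(U)$ by the union $\hat R(U)$ of $R(U)$ with all previously constructed lower colour-$i$ regions that meet $R(U)$.
\item For $\lambda$ large compared with $c$, induction on the scale keeps $\diam \hat R(U)=O((c+1)\lambda^{j+1})$. Every lower region then meets at most one $R(U)$ of scale $j$, and induction shows any two colour-$i$ regions are disjoint or nested.
\item As in \cite[Lemma 8]{DvoNor25}, take the forest in which the parent of a region is the lowest-scale region above it that contains it. Let the bag of a region $F$ consist of the colour-$i$ members whose home region is $F$ or an ancestor of $F$ and which meet the set $F$ itself, not a member of the cluster at $F$.
\item Laminarity gives the edge and subtree axioms. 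Your maximal-disjoint-subfamily count then bounds every bag by $f(O((c+1)\lambda))$ closed neighbourhoods.
\end{itemize}

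The paper bypasses all of this. It quotes \cite[Lemma 7]{DvoNor25} for $n+1$ laminar families such that each member of $\mc{S}$ is $C$-caught by one of them. It then partitions $\mc{S}$ according to the catching family and quotes \cite[Lemma 8]{DvoNor25}. If you use those two lemmas as black boxes, your scale-by-scale colouring and cluster bags are simply discarded.
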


\begin{proof}
We may assume that every member of $\Se$ is a nonempty set since empty members of $\Se$ correspond to isolated vertices of $G$.
By \cite[Lemma 7]{DvoNor25}, there exists a real number $C>0$ only depending on $g$ such that $(X,d)$ admits an $(n+1)$-laminar $C$-web.
That is, there exist $n+1$ laminar families $\F_1,\F_2,...,\F_{n+1}$ such that for every $S \in \Se$, there exists $i_S \in [n+1]$ such that $\F_{i_S}$ $C$-catches $S$.
For every $i \in [n+1]$, let $\Se_i = \{S \in \Se: i_S=i\}$.
So $(\mc{S}_1,\ldots,\mc{S}_{n+1})$ is a partition of $\mc{S}$.

Let $i \in [n+1]$.
It suffices to show that $G[\mc{S}_i]$ admits a $(k,1)$-centered tree-decomposition. 
Since $\Se_i \subseteq \Se$ and $\Se$ is $f$-space-filling, we know that $\Se_i$ is $f$-space-filling.
By \cite[Lemma 8]{DvoNor25}, $I(\Se_i)$ has an $((f(C),1))$-centered tree-decomposition.
Note that $G[\Se_i]=I(\Se_i)$.
Then this lemma follows by taking $k=f(C)$.
Note that $k$ only depends on $f$ and $g$.
\end{proof}

\begin{lemma} \label{finite_space_filling_coloring}
For any positive integer $n$, function $f: {\mathbb R}_+ \rightarrow {\mathbb Z}_+$, dilation $g:\mathbb{R}_+\to\mathbb{R}_+$, and real number $r>0$, there exists $D = D_{\ref{finite_space_filling_coloring}}(n,f,g,r)>0$ such that the following holds.
Let $(X,d)$ be a metric space of Assouad-Nagata dimension at most $n$ such that $g$ is an $n$-dimensional control function.
If $\Se$ is a finite $f$-space filling family of subsets of $X$, then the metric space $(\Se,\dist_{I(\Se)})$ has an $(r,D)$-controlled $(n+2)$-coloring.
\end{lemma}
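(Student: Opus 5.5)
The plan is to combine \cref{l:dn1} with \cref{colorSpace}, applied to the metric space $(\Se,\dist_{I(\Se)})$ with $s=n+1$. The issue is that \cref{l:dn1} gives a partition into $n+1$ parts each admitting a $(k,2)$-centered tree-decomposition of the graph $G[\Se_i]$. What \cref{colorSpace} needs is a $3^{n}r$-compliant $(k,R)$-centered tree-decomposition of the metric subspace $(\Se_i,\dist_{G})$. Compliance at scale $3^n r$ requires that any two members at distance at most $3^n r$ in $G=I(\Se)$ share a bag, which a tree-decomposition of $G[\Se_i]$ does not provide. So I first pass to coarser sets.

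Let $\rho=3^n r$ (we may assume $r$ is a positive integer, rounding up). Replace each $S\in\Se$ by $\psi(S)=\bigcup\{Y\in\Se:\dist_G(S,Y)\le \rho\}$, a $\rho$-shallow union. By \cref{lem-cons}, the family $\Se^*=\{\psi(S):S\in\Se\}$ (a multiset) is $h$-space-filling with $h$ depending only on $f,\rho$. If $\dist_G(S,S')\le 2\rho+1$, then $\psi(S)\cap\psi(S')\ne\emptyset$. Conversely, adjacency of $\psi(S),\psi(S')$ in $I(\Se^*)$ forces $\dist_G(S,S')\le 2\rho+1$. Hence the identity map $\Se\to\Se^*$ is bi-Lipschitz up to additive constants: $\dist_{I(\Se^*)}\le \dist_G$ and $\dist_G\le(2\rho+1)\dist_{I(\Se^*)}$.

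Now apply \cref{l:dn1} to $\Se^*$ with $h,g$. This gives a partition $(\Se_1,\dots,\Se_{n+1})$ of $\Se$ (via $\psi$) and a $(k,2)$-centered tree-decomposition of $I(\Se^*)[\Se_i]$, where $k=k_{\ref{l:dn1}}(h,g)$.

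Take this as a tree-decomposition of the metric space $(\Se_i,\dist_G)$. It is $\rho$-compliant, since $\dist_G(S,S')\le\rho$ implies that $\psi(S),\psi(S')$ intersect and so share a bag. It is also $(k,R)$-centered with $R=2(2\rho+1)$, because each bag is covered by $k$ sets of diameter at most $2$ in $I(\Se^*)$. I need to check that "centered" in \cref{l:dn1} means diameter measured in $I(\Se^*)$ or in its induced subgraph. Either way, the diameter in $I(\Se^*)$ is at most that of the induced subgraph, and the comparison above then bounds the $\dist_G$-diameter by $2(2\rho+1)$.

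Apply \cref{colorSpace} to the finite metric space $(\Se,\dist_G)$ with the partition $(\Se_1,\dots,\Se_{n+1})$, $s=n+1$, and parameters $r,R$. This yields an $(r,D)$-controlled $(n+2)$-coloring with $D=D_{\ref{colorSpace}}(k,r,R,n+1)$, which depends only on $n,f,g,r$.

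The main obstacle is the interface between graph tree-decompositions and compliant metric tree-decompositions, in particular ensuring the needed compliance scale $3^{n}r$. The shallow-union trick handles this, but I must verify that the dependence of $h$ (and hence $k$) on $r$ is harmless; it is, since $D$ may depend on $r$. A second point to check is whether $\Se^*$ contains repeated sets. This is harmless because families are multisets.
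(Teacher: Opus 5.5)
Your proof is correct and follows the paper's argument essentially step for step. The paper also sets $b=3^n r$, replaces each $S$ by the $b$-shallow union $U(S)$, and applies \cref{lem-cons} and \cref{l:dn1}. It then pulls the $(k,2)$-centered decompositions back to $b$-compliant $(k,4b+2)$-centered decompositions of $(\Se_i,\dist_{I(\Se)})$ and finishes with \cref{colorSpace} using $s=n+1$.

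Your rounding of $r$ up to an integer is harmless, since an $(\lceil r\rceil,D)$-controlled coloring is also $(r,D)$-controlled. It is not actually needed, though. The argument only uses the implication $\dist_G(S,S')\le\rho \Rightarrow \psi(S)\cap\psi(S')\neq\emptyset$ and the bound $\dist_G\le(2\rho+1)\dist_{I(\Se^*)}$, and both hold for real $\rho$.
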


\begin{proof} 
Let $n,f,g,r$ be as stated in the lemma.
Let $b=3^{n}r$. 
Let $h:\mathbb{R}_+\to\mathbb{Z}_+$ be the function stated in \cref{lem-cons}.
Note that $h$ only depends on $f$ and $b$ and hence only depends on $f,n,r$.
Let $k=k_{\ref{l:dn1}}(h,g)$.
Define $D=D_{\ref{precolorSpace}}(k, r,4b+2,n+1)$.

Let $(X,d)$ and $\Se$ be as stated in the lemma.
For $S \in \mc{S}$, let $U(S)$ be the union of all $S' \in S$ such that $\dist_{I(\Se)}(S,S') \leq b$. 
Then $U(S)$ is a $b$-shallow union of some elements of $\mc{S}$. 
Let $\mc{S}' = \{U(S): S \in \mc{S}\}$. 
By \cref{lem-cons}, the system $\mc{S}'$ is $h$-space-filling. 
Let $G'$ be the intersection graph of $\mc{S}'$. 
By \cref{l:dn1}, there exists a partition $(\mc{S}'_1,\ldots,\mc{S}'_{n+1})$ of $\mc{S}'$ such that for every $i \in [n+1]$, $G'[\mc{S}'_i]$ admits a $(k,2)$-centered tree-decomposition $(T_i, \{B'_t: t \in V(T_i)\})$. 

For every $i \in [n+1]$, let $\mc{S}_i = \{S \in \mc{S}: U(S) \in \mc{S}'_i\}$. 
Then $(\Se_1,\Se_2,...,\Se_{n+1})$ is a partition of $\Se$.
For any $i \in [n+1]$ and $t \in V(T_i)$, let $B_t = \{S \in \mc{S}_i: U(S) \in B'_t\}$.

\medskip

\noindent{\bf Claim 1:} For any $i \in [n+1]$, $(T_i, \{B_t: t \in V(T_i)\})$ is a $b$-compliant $(k,4b+2)$-centered tree-decomposition of the metric space $(\mc{S}_i,\dist_{I(\Se)}|_{\Se_i})$.

\noindent{\bf Proof of Claim 1:}
For every $S \in \Se_i$, the set $\{t \in V(T_i): S \in B_t\} = \{t \in V(T_i): U(S) \in B'_t\}$ induces a non-null subtree of $T_i$.
For any $S,S' \in \Se_i$, if $\dist_{I(\Se)}(S,S') \leq b$, then either $U(S)=U(S')$, or $U(S)$ and $U(S')$ are adjacent in $G'[\mc{S}'_i]$, so there exists $t \in V(T_i)$ with $U(S),U(S') \in B'_t$, and hence $S,S' \in B_t$. 
So $(T_i, \{B_t: t \in V(T_i)\})$ is a $b$-compliant tree-decomposition of the metric space $(\mc{S}_i,\dist_{I(\Se)}|_{\Se_i})$.

For any $S,S' \in \Se_i$, if $U(S)$ and $U(S')$ are adjacent in $G'[\mc{S}'_i]$, then $\dist_{I(\Se)}(S,S') \leq 2b+1$.
For every $t \in V(T_i)$, since $B'_t$ is $(k,2)$-centered in $G'$, we know that $B_t$ is $(k,4b+2)$-centered in $(\mc{S}_i,\dist_{I(\Se)}|_{\Se_i})$.
$\Box$

\medskip

By Claim 1 and \cref{colorSpace}, $(\mc{S},\dist_{I(\Se)})$ admits an $(r,D)$-controlled $(n+2)$-coloring, as desired.
\end{proof}	

Now we are ready to prove \cref{asdim_space_filling_intro}.
We restate it here for reader's convenience.

\begin{theorem} \label{asdim_space_filling}
For any positive integer $n$, function $f: {\mathbb R}_+ \rightarrow {\mathbb Z}_+$ and metric space $(X,d)$ of Assouad-Nagata dimension at most $n$, if $\F$ is the class of the intersection graphs of $f$-space filling families of subsets of $X$, then the asymptotic dimension of $\F$ is at most $n+1$.
\end{theorem}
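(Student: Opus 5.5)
The plan is to reduce to finite families and then read off the weak-diameter characterization from \cref{finite_space_filling_coloring}.

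First, fix $n$, $f$ and $(X,d)$. Since $(X,d)$ has Assouad-Nagata dimension at most $n$, some dilation $g$ is an $n$-dimensional control function for $(X,d)$. Let $\mc{C}$ be the class of intersection graphs of $f$-space-filling families of subsets of $X$, and let $\mc{C}_{<\infty}$ be its subclass coming from finite families. A subfamily of an $f$-space-filling family is again $f$-space-filling. Hence every finite induced subgraph of a graph in $\mc{C}$ lies in $\mc{C}_{<\infty}$. So, exactly as in the proof of \cref{finite_iasdim}, \cref{finite_asdim} gives $\asdim(\mc{C}) \le \asdim(\mc{C}_{<\infty})$, and it suffices to bound the latter by $n+1$.

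Second, apply \cref{l:asweak} with $k=n+2$. We need a function $F$ such that for every positive integer $\ell$ and every finite $f$-space-filling $\Se$, the graph $G^\ell$ has an $(n+2)$-coloring of weak diameter in $G^\ell$ at most $F(\ell)$, where $G=I(\Se)$. Set $r=\ell$ and $D=D_{\ref{finite_space_filling_coloring}}(n,f,g,\ell)$; this depends only on $n,f,g,\ell$. \cref{finite_space_filling_coloring} gives an $(\ell,D)$-controlled $(n+2)$-coloring $c$ of $(\Se,\dist_G)$.

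Third, translate this coloring. Take a color $i$ with witness $\mc{U}_i$ and a $c$-monochromatic component $K$ of $G^\ell$ of color $i$. Adjacent vertices of $K$ are at $\dist_G$-distance at most $\ell$. Members of $\mc{U}_i$ are pairwise at distance more than $\ell$, so consecutive vertices of a path in $K$ lie in the same part. Since $K$ is connected, all of $K$ lies in a single $U \in \mc{U}_i$, and therefore $\dist_G(u,v)\le D$ for all $u,v\in V(K)$. Because $\dist_{G^\ell}\le \dist_G$, the weak diameter of $K$ in $G^\ell$ is also at most $D$. So $F(\ell)=\lceil D_{\ref{finite_space_filling_coloring}}(n,f,g,\ell)\rceil$ works, and \cref{l:asweak} yields $\asdim(\mc{C}_{<\infty})\le n+1$.

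The only mildly delicate points are the following. First, \cref{finite_space_filling_coloring} requires $f$ to take values in $\mathbb{Z}_+$; if needed, replace $f$ by $\lceil f\rceil$, which preserves the space-filling hypothesis. Second, the bound on $F$ must be uniform over all finite families. This holds because $D$ depends only on $n,f,g,\ell$ and not on $\Se$. No step seems to be a serious obstacle, since the heavy lifting is already done in \cref{finite_space_filling_coloring}.
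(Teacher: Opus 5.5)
Your proposal is correct and follows essentially the same route as the paper. Both apply \cref{finite_space_filling_coloring} with $r=\ell$, turn the $(\ell,D)$-controlled $(n+2)$-coloring into an $(n+2)$-coloring of $(I(\Se))^\ell$ of weak diameter at most $\lceil D\rceil$, and invoke \cref{l:asweak}. Both then pass from finite to arbitrary families via \cref{finite_asdim}, using that subfamilies of $f$-space-filling families are $f$-space-filling. Your argument that each monochromatic component of $G^\ell$ lies in a single witness part simply spells out a step the paper states without detail.
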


\begin{proof}
Let $n,f,(X,d)$ be as stated in the theorem.
Let $g$ be a dilation that is an $n$-dimensional control function of $(X,d)$.
For every $r \in \mathbb{Z}_+$, let $h(r)= \lceil D_{\ref{finite_space_filling_coloring}}(n,f,g,r) \rceil$.
By \cref{l:asweak}, there exists a function $\eta: \mathbb{Z}_+ \rightarrow \mathbb{Z}_+$ only depending on $n$ and $h$ such that $\eta$ is an $(n+1)$-dimensional control function for a graph class ${\mathcal G}$ if for any $r \in \mathbb{Z}_+$ and $G \in {\mathcal G}$, the graph $G^r$ has an $(n+2)$-coloring of weak diameter in $G^r$ at most $h(r)$.
Let $h^*$ be the function $g$ stated in \cref{finite_asdim} by replacing $f$ in \cref{finite_asdim} by $\eta$.

For every finite $f$-space filling family $\Se$ of subsets of $X$ and for every positive integer $r$, \cref{finite_space_filling_coloring} implies that there exists an $(r,h(r))$-controlled $(n+2)$-coloring of the metric space $(\Se,\dist_{I(\Se)})$, so there exists a $(n+2)$-coloring of $(I(\Se))^r$ of weak diameter in $I(\Se)$ (and hence in $(I(\Se))^r$) at most $h(r)$.
Hence $\eta$ is an $(n+1)$-dimensional control function of the set of all intersection graphs of finite $f$-space filling families of subsets of $X$. 
For every (finite or infinite) graph $G$ that is the intersection graph of an $f$-space filling family of subsets of $X$, every finite induced subgraph of $G$ is an intersection graph of a finite $f$-space filling family of subsets of $X$, so \cref{finite_asdim} implies that $h^*$ is an $(n+1)$-control function of $G$.
Since $h^*$ is independent from $G$, we know that $h^*$ is an $(n+1)$-dimensional control function of the set of all intersection graphs of $f$-space filling families of subsets of $X$.
This proves the theorem.
\end{proof}

\section{Tightness of bounds} \label{sec:lower_bound} 

In this section we prove \cref{asdim_space_filling_lower_intro}, which shows that the bound on asymptotic dimension of space-filling families established in \cref{asdim_space_filling} is optimal.

Our proof uses a discrete variant of the following classical Knaster--Kuratowski--Mazurkiewicz (KKM) theorem~\cite{KKM29}.

\begin{theorem}[Knaster--Kuratowski--Mazurkiewicz (KKM) theorem~\cite{KKM29}]\label{t:KKM}
Let $X_1,\ldots, X_m$ be a cover of a geometric $(n+1)$-dimensional simplex $\Delta^{n+1}$ by closed sets such that no $n+2$ sets have a common point. Then $X_i$ intersects every facet of $\Delta^{n+1}$ for some  $1 \leq i \leq m$.
\end{theorem}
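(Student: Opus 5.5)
This is the classical KKM theorem in its Lebesgue-covering form. The paper cites it from \cite{KKM29} rather than proving it, so I would derive it from the standard KKM lemma (equivalently Sperner's lemma) via a dual labelling argument. Set $N=n+1$ and write $\Delta^{N}$ with vertices $v_0,\dots,v_N$. For each $j$ let $F_j$ be the facet opposite $v_j$. Suppose, for contradiction, that no $X_i$ meets every facet. Then every $i$ has some $j(i)$ with $X_i\cap F_{j(i)}=\emptyset$.

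\textbf{Step 1.} For each $j\in\{0,\dots,N\}$ put $C_j=\bigcup_{i:\,j(i)=j}X_i$. Each $C_j$ is a finite union of closed sets, so it is closed, the $C_j$ cover $\Delta^N$, and $C_j\cap F_j=\emptyset$.

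\textbf{Step 2.} Check the KKM covering hypothesis in dual form. For a face $\sigma=\mathrm{conv}\{v_j: j\in J\}$, every point $x\in\sigma$ lies in $F_{j'}$ for every $j'\notin J$. Hence $x\notin C_{j'}$ for $j'\notin J$, so $\sigma\subseteq\bigcup_{j\in J}C_j$. This is exactly the hypothesis of the KKM lemma, with $C_j$ attached to vertex $v_j$.

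\textbf{Step 3.} The KKM lemma then gives a point $x\in\bigcap_{j=0}^N C_j$. To prove that lemma, take fine triangulations and label each vertex $y$ by some $j$ with $y\in C_j$ and $y_j>0$. This is a Sperner labelling by Step 2, so Sperner's lemma yields a fully labelled small simplex. Passing to the limit and using that the $C_j$ are closed gives the common point.

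\textbf{Step 4.} Since $x\in C_j$ for each $j$, there are indices $i_0,\dots,i_N$ with $j(i_t)=t$ and $x\in X_{i_t}$. These indices are distinct because their $j$-values differ. So the $N+1=n+2$ sets $X_{i_0},\dots,X_{i_N}$ share the point $x$, contradicting the hypothesis.

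The only nontrivial ingredient is Sperner's lemma and the compactness limit in Step 3. I would cite both as standard rather than reprove them. The rest is bookkeeping about which facets the sets avoid.
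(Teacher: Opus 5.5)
Your proof is correct. The paper gives no proof of this statement: it quotes it from \cite{KKM29} and uses it as a black box in the proof of \cref{l:space2}, so there is no competing argument to compare against.

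Your route is the standard derivation of this covering form from the KKM lemma. You assign each $X_i$ a facet $F_{j(i)}$ that it avoids and merge the sets into $C_j=\bigcup_{j(i)=j}X_i$. This gives closed sets $C_0,\dots,C_{n+1}$ with $C_j\cap F_j=\emptyset$, which is exactly the face condition $\mathrm{conv}\{v_j:j\in J\}\subseteq\bigcup_{j\in J}C_j$. The KKM lemma, via Sperner's lemma and compactness, then produces a point in $\bigcap_j C_j$. Because the $j$-values differ, that point lies in $n+2$ members $X_{i_0},\dots,X_{i_{n+1}}$ with distinct indices, which contradicts the hypothesis.

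Compared with the bare citation, writing this out makes explicit which hypotheses are used. Closedness of the $X_i$ and finiteness of $m$ are needed so that each $C_j$ is closed and the limit point from Sperner lies in it. The multiplicity bound is used only in the final step. These are exactly the properties the paper must verify for the sets $X_C$ in \cref{l:space2}: there are finitely many components, each $X_C$ is a finite union of closed sets $Z(p)$, and the $(n+1)$-coloring limits how many $X_C$ can share a point. The citation, in exchange, keeps the paper short.
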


We now present the construction of  the space-filling families that we study in this section. 

Let $n$ be a fixed positive integer for the remainder of the section. 
Moreover, we use $| \cdot |$ to denote the 2-norm and use $B(x,r)$ to denote the ball in $\mathbb{R}^n$ of radius $r$ (i.e.\ $\{y \in \bb{R}^n: |x-y| \leq r\}$).

Let $$\Delta = \{x\in \bb{R}_{\geq 0}^{n} : \| x \|_1   \leq 1\}$$ be the geometric $n$-dimensional simplex.
Let $P_{-1} = \emptyset$ and $Q_{-1} = \emptyset$. 
For an integer $i \geq 0$, let  $$Q_i = \{x \in \Delta :  2^{i} x \in \bb{Z}_{+}^{n}\} \qquad \mathrm{and} \qquad P_i = Q_i - Q_{i-1}.$$ 
For $i \geq 0$ and  $p \in P_i$, let $$S(p) = \{ p' \in P_i \cup P_{i-1} : |p-p'| \leq  \sqrt{n}2^{1-i} \}.$$
For every $i \geq 0$, let $$ \mc{S}_i =  \{ S(p): p \in P_i\}.$$
Finally, let $$\mc{S}= \bigcup_{i \in \bb{Z}_{+}} \mc{S}_i.$$

We will show that \begin{itemize} \item there exists a function $f=f_n: \bb{R}_+ \to \bb{R}_+$ such that $\mc{S}$ is $f$-space-filling (\cref{l:space}), and that
	\item The $(n+1)$-colorings of the intersection graphs of subsets of $\mc{S}$ have monochromatic components of arbitrarily large weak diameter (\cref{l:space2}). 
\end{itemize}
\cref{asdim_space_filling_lower_intro} follows from these.

First, we need a simple estimate on the number of points of our sets $P_i$ and $Q_i$ in balls of given radius.

\begin{lemma}\label{l:balls}  
Let $i \geq 0$ be an integer and $r > 0 $.
Then for every $x \in \Delta$, the following statements hold.
    \begin{enumerate} 
        \item If $r \geq \sqrt{n}2^{-i}$, then $ B(x,r) \cap P_i \neq \emptyset$.
        \item $|B(x,r) \cap Q_i| \leq (2^{i+1}r+1)^n$.
	\end{enumerate} 	
\end{lemma}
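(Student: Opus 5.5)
The plan is to argue both statements directly from the lattice structure $Q_i = \Delta \cap 2^{-i}\bb{Z}^n$. Here $\bb{Z}_+$ must include $0$ in the definition of $Q_i$, since otherwise $Q_0$ and $\Delta$ would not match. With that reading, $P_i$ is the set of points $y \in \Delta$ with $2^i y \in \bb{Z}_{\geq 0}^n$ such that at least one coordinate of $2^i y$ is odd.

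For Statement 2, I will use the fact that $B(x,r)$ is contained in the axis-parallel cube $\prod_{j=1}^n [x_j - r, x_j + r]$. An interval of length $2r$ contains at most $2r\cdot 2^i + 1$ points of $2^{-i}\bb{Z}$. Hence the cube contains at most $(2^{i+1}r+1)^n$ points of $2^{-i}\bb{Z}^n$, and $Q_i$ is a subset of this lattice. No further work is needed.

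For Statement 1, the goal is a point $p \in P_i$ with $|p_j - x_j| \leq 2^{-i}$ for every coordinate $j$. This gives $|p - x| \leq \sqrt{n}\,2^{-i} \leq r$, and the ball is closed, so equality is fine.
\begin{enumerate}
\item \emph{Case $i=0$.} Here $P_0 = Q_0 = \{0, e_1, \ldots, e_n\}$. We have $|x - 0| \leq \|x\|_1 \leq 1 \leq \sqrt{n} \leq r$, so $0 \in B(x,r) \cap P_0$.
\item \emph{Case $i \geq 1$: rounding down.} Let $m_j = \lfloor 2^i x_j \rfloor \geq 0$ and $y = 2^{-i} m$. Then $y \leq x$ coordinatewise, so $\|y\|_1 \leq 1$ and $y \in Q_i$. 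Each coordinate satisfies $0 \leq x_j - y_j < 2^{-i}$. If some $m_j$ is odd, then $y \in P_i$ and we are done.
\item \emph{Case $i \geq 1$, all $m_j$ even, and $\sum_j m_j < 2^i$.} Since the sum is even and $2^i$ is even, $\sum_j m_j \leq 2^i - 2$. Set $p = y + 2^{-i} e_1$. Then $p \in \Delta$, and $p \in P_i$ because its first scaled coordinate is odd. Since $x_1 \in [m_1, m_1+1)\,2^{-i}$, we get $|p_1 - x_1| \leq 2^{-i}$, and the other coordinates are unchanged.
\item \emph{Case $i \geq 1$, all $m_j$ even, and $\sum_j m_j = 2^i$.} Then $\|y\|_1 = 1 \geq \|x\|_1$ together with $y \leq x$ forces $y = x$. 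Because $\sum_j m_j = 2^i \geq 2$ and all $m_j$ are even, some $m_j \geq 2$. Set $p = y - 2^{-i} e_j$. This point still has nonnegative coordinates and $\ell_1$-norm less than $1$, it has the odd coordinate $m_j - 1$, and $|p - x| = 2^{-i} \leq r$.
\end{enumerate}

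The only delicate point is the boundary case in step 4, where $y$ lies on the face $\|y\|_1 = 1$ and moving up would leave $\Delta$. Moving down is only at distance $2^{-i}$ because $y$ then coincides with $x$. Parity of $2^i$ for $i \geq 1$ is what guarantees that the upward step in step 3 stays inside $\Delta$.
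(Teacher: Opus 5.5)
Your proof is correct, and it takes a different route from the paper in both parts.

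\textbf{Statement 2.} The paper uses volume packing. Balls of radius $2^{-i-1}$ around points of $Q_i$ are internally disjoint, and those centred in $B(x,r)$ lie in $B(x,r+2^{-i-1})$. This gives exactly the same bound $(2^{i+1}r+1)^n$. Your count of points of $2^{-i}\mathbb{Z}^n$ in the box $\prod_j[x_j-r,x_j+r]$ is more elementary but uses the lattice structure. The packing argument works for any $2^{-i}$-separated set.

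\textbf{Statement 1.} The paper handles $x\in Q_i$ by stepping to a neighbour $x\pm 2^{-i}e_j$, which is essentially your steps 3 and 4. For $x\notin Q_i$ it uses an extremal argument:
\begin{enumerate}
\item place $x$ in a cube of side $2^{-i}$ with vertices in $Q_i$;
\item take a closest pair $q,q'$ of points of $Q_i$ in $B(x,\sqrt{n}2^{-i})$;
\item if both lie in $Q_{i-1}$, the midpoint $(q+q')/2$ lies in $Q_i$ and in the ball, contradicting minimality.
\end{enumerate}

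You instead reduce every $x$ to the lattice-point case by rounding down coordinatewise and control the error one coordinate at a time. This is fully constructive and covers all $n$ uniformly (the paper sets $n=1$ aside). It also treats the face $\|x\|_1=1$ explicitly, where the paper's cube step is not literally available. For example, with $n=2$, $i=1$, $x=(0.4,0.55)\in\Delta$, the only square of side $\frac12$ with all vertices in $Q_1$ is $[0,\frac12]^2$, which does not contain $x$. In this respect your argument is more careful.

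Two small corrections to your write-up:
\begin{itemize}
\item Describing $P_i$ as the lattice points with an odd scaled coordinate is valid only for $i\ge 1$, since $0\in P_0$. This is harmless because you treat $i=0$ separately.
\item Parity is not what keeps $y+2^{-i}e_1$ inside $\Delta$ in step 3. The inequality $\sum_j m_j<2^i$ between integers already gives $\sum_j m_j+1\le 2^i$.
\end{itemize}
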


\begin{proof} 
We first prove Statement 1.
The claim is easy to verify for $n=1$ and so we assume $n>1$.
If $x \in Q_i$, then there exists $1 \leq j \leq n$ such that $x + 2^{-i}e_j$ or $x - 2^{-i}e_j$ is in $Q_i - Q_{i-1}=P_i$ and we are done.
So we may assume $x \not \in Q_i$.

Hence there exists a cube $C$ in $\mathbb{R}^n$ with vertices in $Q_i$ and side length $2^{-i}$ such that $C$ contains $x$.
Since $x \not \in Q_i$, $x$ is not a vertex of $C$.
So $B(x, \sqrt{n}2^{-i})$ contains two distinct vertices of $C$, which are points of $Q_i$.
Let $q,q'$ be distinct points in $B(x, \sqrt{n}2^{-i}) \cap Q_i$ such that $| q - q' |$ is minimum. If $q$ or $q'$ lie in $  P_i = Q_i \setminus Q_{i-1}$ the claim holds. Otherwise, $q,q' \in Q_{i-1}$, which implies $(q+q')/2 \in Q_i \cap B(x, \sqrt{n}2^{-i})$ contradicting the choice of $q$ and $q'$ and finishing the proof.
	
Now we prove Statement 2.
	Note that the balls of radius $2^{-i-1}$ centered at points of $Q_i$ are internally disjoint. Let $m= |B(x,r) \cap Q_i|$. Then $B(x, r+2^{-i-1})$ contains $m$ internally disjoint balls of radius  $2^{-i-1}$. As the volume of an $n$-dimensional ball is proportional to $n$th power of its radius we have $(r+2^{-i-1})^n \geq m (2^{-i-1})^n$, as desired.
\end{proof}

\begin{lemma}\label{l:space} The family $\mc{S}$ is $f$-space-filling, where $f(x)=(8\sqrt{n}x+1)^n$.
\end{lemma}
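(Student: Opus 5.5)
The plan is a direct counting argument: reduce everything to Statement 2 of \cref{l:balls} at a single scale determined by $s$. Fix $x$, $r>0$ and $s>0$, and let $\mc{T}$ be a collection of pairwise disjoint members of $\mc{S}$ of diameter at least $s$, each intersecting $B(x,r)$. We must show $|\mc{T}| \leq f(r/s)=(8\sqrt{n}\,r/s+1)^n$.

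First I will record two facts about a member $S(p)$ with $p \in P_i$.
\begin{enumerate}
\item $S(p)\subseteq B(p,\sqrt{n}2^{1-i})$ by definition. Hence $S(p)$ is finite, $\diam(S(p)) \leq \sqrt{n}2^{2-i}$, and in particular every member of $\mc{S}$ has finite diameter.
\item $S(p) \subseteq P_i\cup P_{i-1}\subseteq Q_i$.
\end{enumerate}
If $S(p)\in\mc{T}$, then $s \leq \diam(S(p)) \leq \sqrt{n}2^{2-i}$, so $2^{i}\leq 4\sqrt{n}/s$. Let $i^*$ be the largest integer with $2^{i^*}\leq 4\sqrt{n}/s$. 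If $i^* < 0$, then $\mc{T}=\emptyset$ and there is nothing to prove. Otherwise every member of $\mc{T}$ comes from a level $i\leq i^*$. Since the $Q_j$ are nested, it is contained in $Q_{i}\subseteq Q_{i^*}$.

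Next, for each $T\in\mc{T}$ I choose a point $y_T\in T\cap B(x,r)$. The sets in $\mc{T}$ are pairwise disjoint, so the map $T\mapsto y_T$ is injective. Its image lies in $B(x,r)\cap Q_{i^*}$. By Statement 2 of \cref{l:balls},
\[
|\mc{T}| \leq |B(x,r)\cap Q_{i^*}| \leq (2^{i^*+1}r+1)^n \leq (8\sqrt{n}\,r/s+1)^n = f(r/s).
\]

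The main point requiring care is that \cref{l:balls} is stated for $x\in\Delta$, whereas the ball center in the definition of space-filling is an arbitrary point of the ambient space $\bb{R}^n$. I will observe that the volume argument in the proof of Statement 2 never uses $x\in\Delta$, so the bound holds for every $x\in\bb{R}^n$. The remaining steps are routine. It is essential to pick witness points inside $B(x,r)$, rather than the centers $p$. The centers could lie up to $\sqrt{n}2^{1-i}$ away from the ball, and that distance is unbounded for small $i$.
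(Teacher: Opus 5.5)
Your proof is correct and follows essentially the same route as the paper's: fix the scale $i_0$ (your $i^*$) with $\sqrt{n}\,2^{2-i_0}\ge s$, observe that every relevant member lies in $Q_{i_0}$, and count via Statement 2 of \cref{l:balls}. You also make explicit several details the paper leaves implicit: the witness points are chosen in $T\cap B(x,r)$, the case $i^*<0$ is handled, and the volume bound holds for centers outside $\Delta$.
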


\begin{proof} 
For $p \in \Delta$ and $s,r \in \bb{R}^{+} $ we need to bound the number of pairwise disjoint elements of $\mc{S}$ intersecting $B(p,r)$ with diameter at least $s$. 
Let $i_0$ be the maximum integer such that $ \sqrt{n}2^{2-i_0} \geq s$.
If the diameter of an element $S \in \mc{S}$ is at least $s$, then $S \in  \mc{S}_i $ for some $i$ with $ \sqrt{n}2^{2-i} \geq s$, so $S$ contains a point in $Q_{i_0}$. 
By Statement 2 of \cref{l:balls}, there are at most  $$|B(p,r) \cap Q_{i_0}| \leq (2^{i_0+1}r+1)^n  \leq (\fs{8\sqrt{n}r}{s}+1)^n = f(\frac{r}{s})$$ pairwise disjoint elements of $\mc{S}$ of diameter at least $s$ intersecting $B(p,r)$.
\end{proof}	

\begin{lemma}\label{l:space2} 
For every $\ell>0$, there exists $k \in \mathbb{Z}$ such that every $(n+1)$-coloring of $I(\bigcup_{i=0}^k \mc{S}_i)$ has a monochromatic component of weak diameter at least $\ell$.
\end{lemma}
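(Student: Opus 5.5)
We argue by contradiction via the KKM theorem (\cref{t:KKM}), applied to an $(n+1)$-dimensional simplex built from the $n$-simplex $\Delta$ and the scale parameter. Fix $\ell$ and suppose that for a large $k$ (to be chosen in terms of $\ell$ and $n$) there is an $(n+1)$-coloring $c$ of $G_k=I(\bigcup_{i=0}^k \mc{S}_i)$ in which every monochromatic component has weak diameter less than $\ell$.

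The first step is to understand the metric of $G_k$. Two members $S(p),S(p')$ with $p\in P_i$, $p'\in P_j$ can meet only if $|i-j|\le 1$. For $p\in P_i$, the set $S(p)$ contains points of $P_{i-1}$ by Statement 1 of \cref{l:balls}, so every vertex of level $i$ is adjacent to a vertex of level $i-1$. Hence $G_k$ looks like a ``cone'': level $i$ is roughly a grid of mesh $2^{-i}$ on $\Delta$, and $\dist_{G_k}(S(p),S(p'))$ is comparable to $|i-j|+\log$-type terms. The key quantitative facts I would prove are two. First, any two vertices at levels $\le i$ whose points are within Euclidean distance $O(2^{-i})$ are at graph distance $O(1)$. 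Second, any path in $G_k$ of length $L$ stays within Euclidean distance $O(\sum 2^{-\text{level}})$ and changes level by at most $L$. In particular, a monochromatic component of weak diameter $<\ell$ spans fewer than $\ell$ levels, and its points at level $i$ lie within Euclidean distance $O(\ell 2^{-(i-\ell)})$ of each other.

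Next I would build the continuous object. Identify the vertices of $G_k$ with points $(p, t_i)$ of the prism $\Delta\times[0,1]$, where $t_i=i/k$. Triangulate, or simply cover, this region: for each color $a\in[n+1]$ let $X_a$ be the closed union of small closed neighbourhoods (of size comparable to the local mesh) of points whose vertex has color $a$. Then add one more set $X_{n+2}$: a thin closed neighbourhood of the top face $\Delta\times\{1\}$ together with small pieces, so that $X_1,\dots,X_{n+2}$ cover a region homeomorphic to an $(n+1)$-simplex. The natural model is the cone over $\Delta$ with apex corresponding to level $0$, namely $S(p)$ with $p\in P_0$ the vertices of $\Delta$. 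Replacing each $X_a$ by the union of its connected pieces, one per monochromatic component, we want every point to be covered by at most $n+1$ sets, so that KKM forces some piece to meet every facet. A piece meeting all $n+2$ facets of the cone (the $n+1$ side facets over the faces of $\Delta$, and the base $\Delta\times\{1\}$) must have large extent. Either it spans from near the apex-side faces down to level $k$, contradicting the bound of $\ell$ levels, or it meets the opposite facets $\{x_j=0\}$ and $\{\|x\|_1=1\}$ at a common level band, contradicting Euclidean spread $O(\ell2^{\ell-i})$ once $k\gg \ell$.

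The main obstacle is making the ``no $n+2$ sets share a point'' hypothesis and the facet-meeting conclusion work simultaneously. Different monochromatic components of the same color can be close, so we must either use the KKM sets as color classes and apply a Lebesgue-type argument, or use a shrinking of the neighbourhoods together with the separation between distinct same-colored components (they are nonadjacent, hence at bounded-below Euclidean distance relative to the mesh). I would first try the Lebesgue covering form instead: color classes $X_1,\dots,X_{n+1}$ plus the base set $X_{n+2}$ form a cover by $n+2$ closed sets of an $(n+1)$-simplex. In that setting KKM/Lebesgue gives a point in all $n+2$ sets, hence a point near the base in all $n+1$ color classes. I would then refine by a Sperner-lemma version on the triangulation induced by the levels, locating a chain of adjacent same-colored vertices crossing $\ell$ levels. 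Getting this combinatorial Sperner labeling consistent on the boundary faces is where I expect most of the work.
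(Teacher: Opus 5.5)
There is a genuine gap: the proposal never produces sets satisfying the hypotheses of \cref{t:KKM}, and that is the real content of this lemma. Your architecture is right: the cone over $\Delta$ with apex at level $0$, one closed set per monochromatic component, KKM, then the top facet forcing a level-$k$ vertex and the side facets forcing Euclidean spread. Your endgame is essentially sound, with one slip: for $n\ge 2$ the facets $\{x_j=0\}$ and $\{\|x\|_1=1\}$ of $\Delta$ intersect. So what bounds the diameter below by a constant $c_n>0$ is meeting \emph{all} $n+1$ facets of $\Delta$, which is exactly what KKM gives you.

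What is missing is a family of sets that are closed, cover the cone, and have no point in $n+2$ of them. You flag this as ``the main obstacle'' and leave it open, and the constructions you sketch fail:
\begin{itemize}
\item You never specify the neighbourhoods precisely enough to check covering. The natural isotropic ones, of radius comparable to $2^{-i}$ around $(p,i/k)$, leave the gaps of height $1/k$ between levels uncovered once $2^{-i}\ll 1/k$.
\item Connected pieces of a colour class correspond to monochromatic components only if neighbourhoods of non-adjacent vertices are disjoint. You never arrange this, or check that such small neighbourhoods can still cover.
\item The extra set $X_{n+2}$ near the top face allows points lying in $n+2$ sets (one per colour plus $X_{n+2}$). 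It is also unnecessary, since $\Delta\times\{1\}$ is already one of the $n+2$ facets of the cone.
\item The fallback does not help. KKM or Lebesgue applied to the $n+1$ colour classes plus a base set only yields a point lying in every class, which says nothing about the size of any single monochromatic component. The Sperner labelling you would need instead is never specified.
\end{itemize}

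The missing idea is to use anisotropic sets whose horizontal radius is matched to the definition of $S(p)$. Cut $[0,1]$ into bands $I_i=[\frac{i}{k+1},\frac{i+1}{k+1}]$ and set $Z(p)=B(p,\sqrt n 2^{-i})\times I_i$ for $p\in P_i$.
\begin{itemize}
\item Covering: by Statement 1 of \cref{l:balls}, these sets cover each slab $\Delta\times I_i$, hence the whole prism.
\item Intersection implies adjacency: $S(p)$ was defined with radius $\sqrt n 2^{1-i}$, exactly twice this covering radius, so $Z(p)\cap Z(p')\neq\emptyset$ forces $S(p)\cap S(p')\neq\emptyset$. For $p,p'$ at the same level this is the triangle inequality. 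For consecutive levels, rounding $p'$ gives a point of $P_i$ lying in both sets. Bands two apart never meet.
\item Bounded overlap: for $X_C=\bigcup_{p\in V(C)}Z(p)$, distinct components of the same colour give disjoint sets, so no point lies in $n+2$ of them.
\item Apex: collapsing $\Delta\times\{0\}$ to a point preserves this, because $\mc{S}_0$ is a clique in the intersection graph.
\end{itemize}
With this in place your dichotomy completes the proof. A component of weak diameter $<\ell$ that contains a level-$k$ vertex lives at levels $>k-\ell$. Its Euclidean spread is therefore $O(\ell 2^{2\ell-k})$, which is too small to meet every facet of $\Delta$ once $k\gg\ell$.
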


\begin{proof} 
The cone $\Delta'$ over $\Delta$ is obtained from $\Delta \times [0,1]$ by identifying all points in $\Delta \times \{0\}$ to a single point. 	We describe a cover  of $\Delta'$ to which we apply \cref{t:KKM}, working first with $\Delta \times [0,1]$.
 	
Let $k$ be a positive integer such that $\min \{k/2,  2^{k/2}n^{-3/2}\}>\ell$.
	
For an integer $0 \leq i \leq k$, let $I_i =  \left[ \frac{i}{k+1}, \frac{i+1}{k+1} \right].$
For a point $p \in P_i$, let $$Z(p)= B(p,\sqrt{n}2^{-i}) \times I_i.$$ 
By Statement 1 of \cref{l:balls}, for every $0 \leq i \leq k$, we have \begin{equation}\label{e:cover} \Delta \times I_i = \bigcup_{p \in P_i} Z(p) \end{equation}
Moreover, by definition of $\mc{S}(p)$, for any $p,p' \in Q_k$, if $Z(p) \cap Z(p') \neq \emptyset$ then $S(p)  \cap S(p') \neq \emptyset$.

Consider now an arbitrary $(n+1)$-coloring of $I(\bigcup_{i=0}^k \mc{S}_i)$. For a monochromatic component $C$ of this coloring define $X_C = \bigcup_{p \in V(C)} Z(p)$. By \eqref{e:cover} the sets $X_C$ cover $\Delta \times [0,1]$, and thus $\Delta'$ after the identification. By the second observation in the previous paragraph the sets corresponding to distinct components of the same color are disjoint, and this property is preserved by the identification, as every two sets in $\mc{S}_0$ are adjacent in the intersection graph.

Thus the cover $\{X_C\}$ of $\Delta'$ satisfies the conditions of \cref{t:KKM}.

By \cref{t:KKM}  there exists a monochromatic component $C$ such that $X_C$ intersects every facet of $\Delta'$. As the facet $\Delta \times \{1\}$ of $\Delta'$ is only covered by sets $Z(p)$ for $p \in P_k$, there exists $s \in V(C) \cap \mc{S}_k$.
If $C$ contains a point  $s' \in \mc{S}_i$ for $i \leq k/2$ then $C$ has weak diameter at least $k/2$, as $\dist_{I(\bigcup_{i=0}^k \mc{S}_i)}(s,s') \geq k-i \geq k/2$. 

We now assume that  $V(C) \subseteq \bigcup_{i > k/2} \mc{S}_i$. Let $Y_C$ be the projection of $X_C$ on $\Delta$. The diameter of the projection of every set $Z(p)$ comprising $X_C$ is at most $\sqrt{n}2^{-k/2}$. However, $Y_C$ intersects every facet of $
\Delta$ and thus has diameter at least $1/n$. It follows that the weak diameter of $C$ is at least $\frac{1/n}{\sqrt{n}2^{-k/2}}= 2^{k/2}n^{-3/2}$. 
We have shown that every  $(n+1)$-coloring of $I(\bigcup_{i=0}^k \mc{S}_i)$ has a monochromatic component of weak diameter at least $$\min \{k/2,  2^{k/2}n^{-3/2}\}>\ell,$$
as desired.
\end{proof}	

For every $k$, $\bigcup_{i=0}^k \mc{S}_i$ is a subset of $\Se$, so it is $f$-space-filling by \cref{l:space}.
Hence \cref{asdim_space_filling_lower_intro} follows from \cref{l:space2,l:asweak}.

\bigskip

\noindent{\bf Acknowledgement:} This work was initiated at ``New Perspectives in Colouring and Structure (24w5272)'' held at the Banff International Research Station in 2024.
We thank the organizers for creating a nice work environment and thank Agelos Georgakopoulos for discussion about \cref{sphere_intro}.

\end{document}